\definecolor{goldenpoppy}{rgb}{0.99, 0.76, 0.0}
\definecolor{richblack}{rgb}{0.06, 0.05, 0.03}
\definecolor{cadmiumred}{rgb}{0.89, 0.0, 0.13}
\definecolor{fuchsia}{rgb}{0.3, 0.0, 0.3}
\definecolor{green(ncs)}{rgb}{0.0, 0.52, 0.32}
\tikzstyle{species_T} = [circle,radius=0.1cm, text centered, draw=black, fill=goldenpoppy]
\tikzstyle{species_R} = [circle,radius=0.1cm, text centered, draw=black, fill=white]
\tikzstyle{species_C} = [circle,radius=0.1cm, text centered, draw=black, fill=fuchsia]
\tikzstyle{dots} = [circle,radius=0.1cm,text centered]
\tikzstyle{arrowA} = [-{Latex[length=2mm]},white,dashed]
\tikzstyle{arrowB} = [-{Latex[length=2mm]},green(ncs)]
\tikzstyle{arrowC} = [-{Latex[length=2mm]},cadmiumred]
\tikzstyle{inhibit} = [thick,-|,black!100]
\tikzstyle{loosely dashed}= [dash pattern=on 3pt off 6pt]
\newtheorem{definition}{Definition}[section]
\newtheorem{theorem}{Theorem}[section]
\newtheorem{example}{Example}[section]
\newtheorem{lemma}{Lemma}[section]
\newtheorem{corollary}{Corollary}[section]
\let\Item\item
\begin{document}
\vspace*{-1cm}

\centerline{{\huge Chemical systems with chaos}}

\medskip
\bigskip

\centerline{
\renewcommand{\thefootnote}{$1$}
{\Large Tomislav Plesa \footnote{
Department of Applied Mathematics and Theoretical Physics, University of Cambridge,
Centre for Mathematical Sciences, Wilberforce Road, Cambridge, CB3 0WA, UK;
e-mail: tp525@cam.ac.uk}
}}

\medskip
\bigskip

\noindent
{\bf Abstract}: Three-dimensional 
polynomial dynamical systems (DSs) 
can display chaos with various properties  
already in the quadratic case with  
only one or two quadratic monomials. 
In particular, one-wing chaos is reported
in quadratic DSs with only one quadratic monomial,
while two-wing and hidden chaos in 
quadratic DSs with only two quadratic monomials. 
However, none of the reported DSs can be
realized with chemical reactions.
To bridge this gap, in this paper, we investigate 
chaos in chemical dynamical systems (CDSs)
- a subset of polynomial DSs that can model
the dynamics of mass-action chemical reaction networks.
To this end, we develop a fundamental theory 
for mapping polynomial DSs into CDSs of the same dimension
and with a reduced number of non-linear terms. 
Applying this theory, we show that, under suitable 
robustness assumptions, quadratic CDSs, 
and cubic CDSs with only one cubic, 
can display a rich set of chaotic solutions
already in three dimensions.
Furthermore, we construct some 
relatively simple three-dimensional examples,
including a quadratic CDS with one-wing chaos and three quadratics, 
a cubic CDS with two-wing chaos and one cubic,
and a quadratic CDS with hidden chaos and five quadratics.

\section{Introduction}
Systems of $N$ first-order 
autonomous ordinary-differential equations
with polynomials of at most degree $n$ on the right-hand sides, 
called $N$-dimensional $n$-degree polynomial \emph{dynamical systems} (DSs), 
can display a rich set of solutions
whose complexity increases with $N$ and $n$.
In particular, by the Poincar\'e-Bendixson 
theorem~\cite{Wiggins,Strogatz}, 
when $N \le 2$, every bounded solution
of this class of DSs in the long run converges
to an equilibrium (time-independent solution), 
a solution connecting equilibria, 
or a cycle (time-periodic solution).
On the other hand, if $N \ge 3$ and $n \ge 2$, then
the dynamics can become vastly more complicated: polynomial DSs
can then display solutions that are bounded and unstable, 
and that do not converge to equilibria, solutions connecting
equilibria or cycles~\cite{Wiggins,Strogatz}.
Under further suitable properties, 
a set containing such solutions is called a \emph{chaotic set};
if, furthermore, this set attracts all nearby solutions,
then it is called a \emph{chaotic attractor}~\cite{Wiggins,Strogatz,Chaos_Def}.
Containing unstable solutions, nearby solutions 
in a chaotic set may diverge from each 
other fast; consequently, an infeasible 
precision in the initial conditions may be needed
to accurately compute the solutions beyond 
some time, inspiring the name chaos.

A number of polynomial DSs with $N = 3$ and $n = 2$ 
are reported that, despite structural simplicity, display chaos
with various properties~\cite{Hidden}. Examples include 
three-dimensional quadratic DSs 
with only $1$ quadratic monomial, 
such as the R\"ossler system~\cite{Rossler}
and Sprott systems F--S~\cite{Sprott},
and with only $2$ quadratic monomials, 
such as the Lorenz system~\cite{Lorenz}
and Sprott systems A--E~\cite{Sprott}.
Topologically, all of these systems
have chaotic attractors with \emph{one wing},
except the Lorenz and Sprott systems B--C
which have chaotic attractors with \emph{two wings}.
The two-quadratic-terms DSs can also 
have a chaotic attractor coexisting with 
a unique and stable equilibrium~\cite{Sprott2};
such attractors are said to be \emph{hidden}, as they 
cannot be detected from the initial conditions close 
to the equilibrium. With some exceptions,
such as the Lorenz system, chaos is numerically demonstrated, 
but not proved to exist, in these examples.
In particular, proving existence of chaos
is in general difficult; for example, it took more than 
three decades to prove chaoticity of the Lorenz system~\cite{Tucker}.

In this paper, we focus on a special subset of 
polynomial DSs that can 
model the time-evolution of the 
(non-negative) concentrations of chemical species,
reacting under mass-action kinetics, called
\emph{chemical dynamical systems} (CDSs)~\cite{QCM};
in addition to chemistry, CDSs are also
used to model a range of 
biological phenomena~\cite{Janos,Feinberg}.
For example, $\mathrm{d} x/\mathrm{d} t = (1 - x)$
is a one-dimensional linear CDS which models
the time-evolution of the concentration $x = x(t)$ 
of chemical species $X$ reacting according to 
the production reaction $\varnothing \to X$ 
and the degradation reaction $X \to \varnothing$, 
where $\varnothing$ denotes some species not explicitly modelled.
In contrast, polynomial DS 
$\mathrm{d} x/\mathrm{d} t = -1$ is not chemical, 
because the monomial $-1$ drives $x(t)$ to negative values,
which are chemically infeasible. 

CDSs with chaos in the positive orthant 
are reported in the literature. 
However, compared to 
the general polynomial DSs, the reported chaotic 
CDSs are structurally more complicated - 
they have a higher dimension $N$, 
higher polynomial degree $n$, or a larger
number of non-linear terms. 
In particular, chaos is proven to exist in
a class of quadratic CDSs that are five-dimensional~\cite{Smale},
and in another class of quadratic CDSs 
with sufficiently high dimension~\cite{Vakulenko},
in three-dimensional quartic CDSs~\cite{Janos_chaos}
and, as proven more recently,  
in three-dimensional cubic CDSs
with exactly $2$ cubics~\cite{QCM}[Theorem 5.2]. 
Simpler systems are reported with 
more numerically-established chaos. 
In particular, the minimal Willamowski–R\"ossler system~\cite{RosslerW,MinRosslerW}
is a three-dimensional quadratic CDS 
with exactly $6$ quadratic monomials, 
put forward as having one-wing chaos;
a computer-assisted proof of some chaotic properties 
is presented in~\cite{RosslerW_proof}.
Another three-dimensional quadratic CDS with exactly $6$ quadratics
is constructed in~\cite{QCM}[Theorem E.1]; its chaoticity
is guaranteed assuming that Sprott system P~\cite{Sprott}[Table 1],
from which it is derived, has suitably robust chaos. 
To the best of the author's knowledge, 
no three-dimensional quadratic CDS with (one-wing) chaos
and $5$ or less quadratics, or cubic CDS with two-wing chaos
and less than $2$ cubics, are reported
at the time of writing this paper.
Furthermore, no three-dimensional quadratic or cubic CDSs with hidden 
chaos and a unique and stable equilibrium are reported; 
see also~\cite{CDS_Hidden}.

\begin{figure}[!htbp]
\vskip -0.0cm
\leftline{\hskip 
0.1cm (a) One-wing chaos~(\ref{eq:CDS_1}) \hskip  
1.8cm (b) Two-wing chaos~(\ref{eq:CDS_2}) \hskip  
2.4cm (c) Hidden chaos~(\ref{eq:CDS_3})}
\vskip 0.2cm
\centerline{
\hskip 0.0cm
\includegraphics[width=0.35\columnwidth]{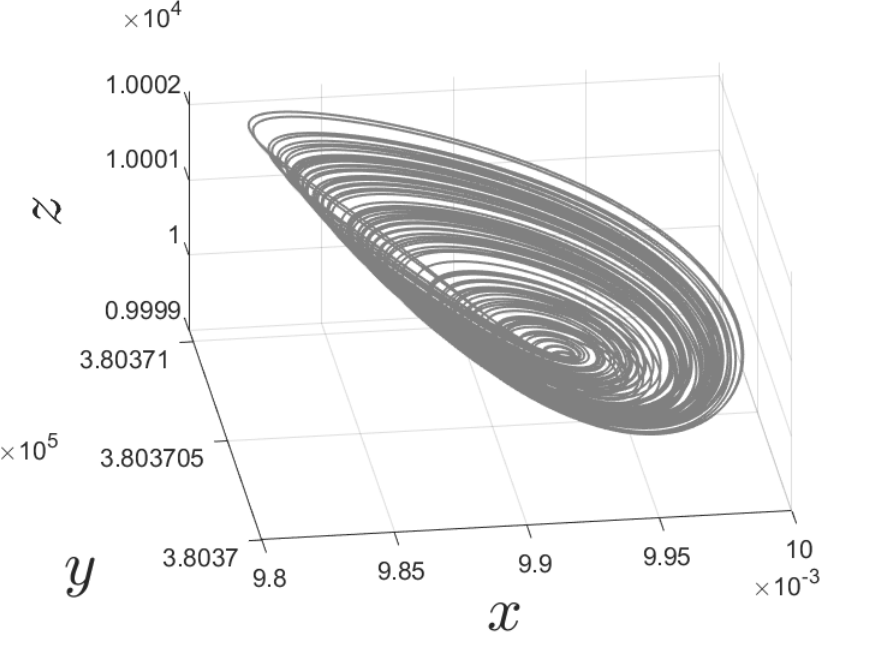}
\hskip 0.5cm
\includegraphics[width=0.35\columnwidth]{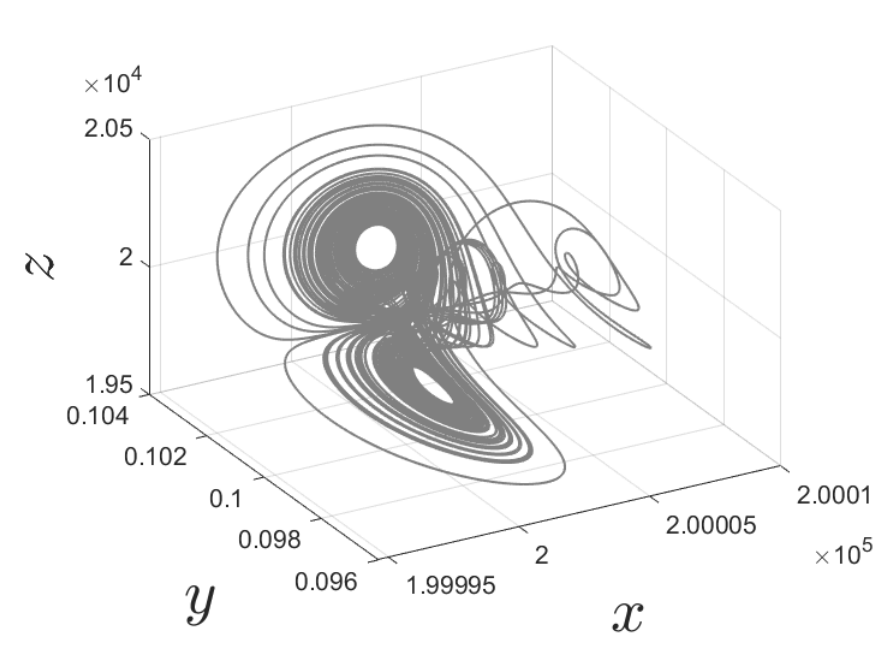}
\hskip 0.5cm
\includegraphics[width=0.35\columnwidth]{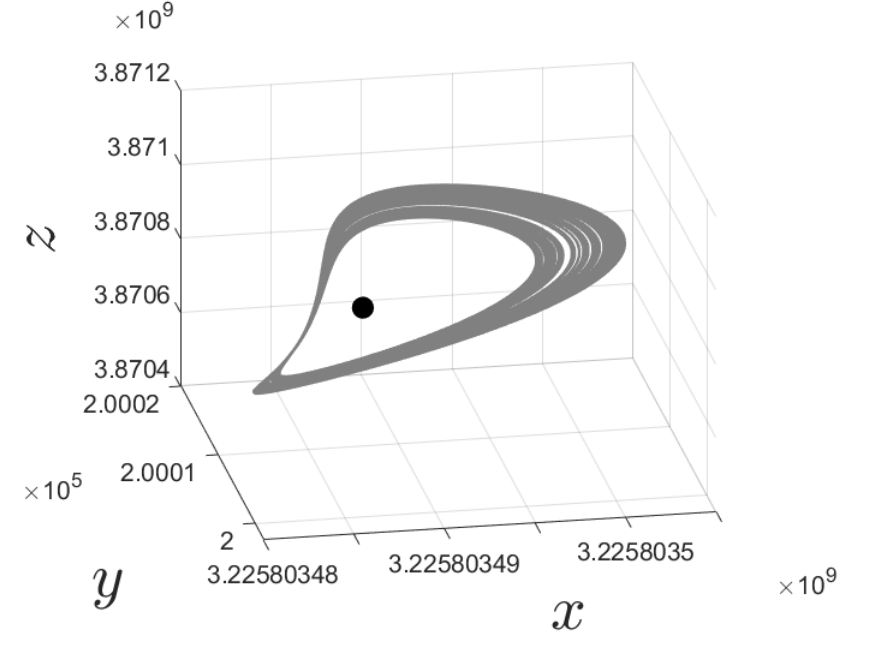}
}
\vskip -0.2cm
\caption{\it{\emph{Three-dimensional CDSs with chaos.} 
Panels \emph{(a)}--\emph{(c)} display solutions 
of the quadratic \emph{CDS}~$(\ref{eq:CDS_1})$
with exactly $3$ quadratic monomials, 
cubic \emph{CDS}~$(\ref{eq:CDS_2})$
with exactly $1$ cubic monomial,
and the quadratic \emph{CDS}~$(\ref{eq:CDS_3})$
with exactly $5$ quadratic monomials, respectively. 
The parameters and initial conditions 
are chosen as in \emph{Figures}~$\ref{fig:CDS_1}$--$\ref{fig:CDS_3}$, 
respectively.}} 
\label{fig:CDS_0}
\end{figure}

In this paper, we bridge this gap by developing
and applying some fundamental theory about 
mapping non-chemical into chemical DSs.
In particular, the so-called \emph{quasi-chemical maps} (QCMs)
are introduced in~\cite{QCM}, that can map 
polynomial DSs into CDSs purely by smoothly perturbing 
its right-hand side and translating the dependent variables,
thereby preserving both the dimension and robust structures. 
In this paper, we construct some novel QCMs that are refined
to introduce a lower number of non-linear terms, 
thus being suitable for designing structurally simpler CDSs. 
We then apply this theory to prove some results 
about the capacity of CDSs to have chaos. 
In particular, we prove that every quadratic DS with only 
$1$ quadratic monomial, and with suitably robust chaos, 
can be mapped to a \emph{quadratic} CDS of the same dimension 
and with chaos preserved.
Similarly, we show that every quadratic DS with only 
$2$ quadratic monomials can be mapped to a cubic CDS
with only $1$ cubic monomial, and that such CDSs 
can display chaos in three dimensions.
Furthermore, we construct some three-dimensional 
examples which appear to be the simplest reported
so far when it comes to the number of the highest-degree monomials:
a quadratic CDS with a one-wing
chaotic attractor and exactly $3$ quadratics, 
a cubic CDS with a two-wings chaotic attractor and exactly $1$ cubic,
and a quadratic CDS with a hidden chaotic attractor, and unique and stable equilibrium, with exactly $5$ quadratics; see Figure~\ref{fig:CDS_0}.

The paper is organized as follows. 
In Section~\ref{sec:background}, we present 
some background theory; further details are presented in Appendices~\ref{app:CRN}
and~\ref{app:LCE}. In Section~\ref{sec:DS_to_CDS}, 
we present some theory about mapping DSs into 
structurally relatively simple CDSs using QCMs. 
This theory is then applied to study chaos
in CDSs in Section~\ref{sec:applications}. Finally, 
we provide a summary and discussion in 
Section~\ref{sec:discussion}.

\section{Background}
\label{sec:background}
In this section, we present some notation and background theory.

\textbf{Notation}.
The spaces of real, non-negative and 
positive real numbers are denoted by $\mathbb{R}$,
$\mathbb{R}_{\ge}$ and $\mathbb{R}_{>}$, respectively.
Absolute value of $x \in \mathbb{R}$ is denoted by $|x|$.
Euclidean column vectors are denoted by 
$\mathbf{x} = (x_1, x_2, \ldots, x_N)^{\top} \in \mathbb{R}^{N}$, 
where $\cdot^{\top}$ is the transpose operator.
The Euclidean norm of $\mathbf{x} \in \mathbb{R}^{N}$ is given by 
$\|\mathbf{x}\| = (x_1^2 + x_2^2 + \ldots + x_N^2)^{1/2}$
and, abusing the notation slightly, 
we also denote by $\|\cdot\|$ the induced matrix norm.
Operator nabla is given by $\nabla
= (\partial/\partial x_1, \partial/\partial x_2,
\ldots, \partial/\partial x_N)$.

\subsection{Dynamical systems}
Consider a system of autonomous
ordinary-differential equations
\begin{align}
\frac{\mathrm{d} \mathbf{x}}{\mathrm{d} t}
& = \mathbf{f}(\mathbf{x}; n),
\label{eq:DS}
\end{align}
where $t \in \mathbb{R}$ is time,
$\mathbf{x} = (x_1,x_2,\ldots,x_N)^{\top} \in \mathbb{R}^N$
and $\mathbf{f}(\mathbf{x}; n) = (f_1(\mathbf{x};n),
f_2(\mathbf{x};n), \ldots, f_N(\mathbf{x};n))^{\top} \in \mathbb{R}^N$
is a vector field with $f_i(\cdot; n) : \mathbb{R}^N \to \mathbb{R}$
a polynomial of degree at most $n$ for all $i = 1,2,\ldots,N$.
We say that~(\ref{eq:DS}) is an $N$-dimensional 
$n$-degree polynomial \emph{dynamical system} (DS).
For simplicity, we assume that all the variables are dimensionless;
furthermore, we assume that every equation in~(\ref{eq:DS}) 
is written so that all of its monomials are distinct.

Key to this paper is a special subset of polynomial DSs.

\begin{definition} $($\textbf{Chemical dynamical system}$)$ 
\label{def:CDS}
Assume that $m(\mathbf{x})$ is a monomial in $f_i(\mathbf{x})$
such that $m(\mathbf{x}) \ge 0$ 
when $x_i = 0$ and $x_{j} \ge 0$ for all $j \ne i$.
Then $m(\mathbf{x})$ is said to be a \emph{chemical} monomial.
If $f_i(\mathbf{x})$ contains only chemical monomials,
then $f_i(\mathbf{x})$ is \emph{chemical}; 
we then also say that the $i$th equation from~$(\ref{eq:DS})$
is \emph{chemical}.
If every equation of~$(\ref{eq:DS})$
is chemical, then~$(\ref{eq:DS})$ is said to be an 
$N$-dimensional $n$-degree (mass-action) 
\emph{chemical dynamical system (CDS)}.
\end{definition}
 
If $f_i(\mathbf{x})$ is chemical, then 
$f_i(\mathbf{x}) \ge 0$ when $x_i = 0$ 
and $x_{j} \ge 0$ for all $j \ne i$; consequently, 
the CDS solution $\mathbf{x}(t;\mathbf{x}_0) \in \mathbb{R}_{\ge}^N$
is non-negative for all $t \ge 0$, provided 
that is it non-negative initially, 
$\mathbf{x}(0;\mathbf{x}_0) 
= \mathbf{x}_0 \in \mathbb{R}_{\ge}^N$.
Furthermore, variables $x_1,x_2,\ldots,x_N$ 
from CDSs can be interpreted 
as (non-negative) concentrations of chemical species, 
which react according to a set
of reactions jointly called a \emph{chemical reaction network} (CRN)~\cite{Janos,Feinberg}; 
see Appendix~\ref{app:CRN} for more details.

To characterize structural complexity of CDSs and CRNs, 
we introduce the following definition.

\begin{definition} $($\textbf{Structural complexity}$)$ 
\label{def:CDS_complexity}
If $\mathbf{f}(\mathbf{x}; n)$ contains exactly $M_i$
monomials of degree $i$, 
then \emph{DS}~$(\ref{eq:DS})$ is said to be 
an $n$-degree $(M_0 + M_1 + \ldots + M_n,M_2,M_3,\ldots,M_n)$ \emph{DS}. 
If a \emph{CRN} induced by an $n$-degree \emph{CDS} 
contains exactly $R_i$ reactions of degree $i$,
then it is said to be an $n$-degree
$(R_0 + R_1 + \ldots + R_n,R_2,R_3,\ldots,R_n)$ \emph{CRN}.
\end{definition}

\noindent
\textbf{Remark}. CDSs induce CRNs with $R_i \le M_i$; 
see Appendix~\ref{app:CRN} and the examples below.

\emph{Transformation of variables}. 
It is of great interest to analyze how CDSs
behave under various transformations of their variables.
For example, CDSs are not invariant under general 
affine transformations~\cite{QCM}.
In particular, translating the dependent variables
via $\mathbf{x} \to (\mathbf{x} - \mathbf{T})$,
defined as introducing the new variables
$\mathbf{\bar{x}} = (\mathbf{x} + \mathbf{T})$ in~(\ref{eq:DS}),
in general transforms chemical DSs into non-chemical ones, 
see also Section~\ref{sec:DS_to_CDS}. 
Let us now consider two special
affine maps: rescaling $x_i \to s_i x_i$,
defined as introducing a new variable 
$\bar{x}_i = x_i/s_i$ in~(\ref{eq:DS}), 
and permutation $x_i \to x_j$, defined via new variables 
$\bar{x}_i = x_j$ and $\bar{x}_j = x_i$.
It follows directly from Definition~\ref{def:CDS}  
that CDSs are invariant under every rescaling with positive factors,
and under every permutation, which we now formalize. 

\begin{lemma}$($\textbf{\emph{Positive scaling and permutation}}$)$ 
\label{lemma:scaling_permutation}
Assume that \emph{DS}~$(\ref{eq:DS})$ is chemical 
(respectively, non-chemical). Then, under 
every rescaling $x_i \to s_i x_i$ with $s_i > 0$,
and under every permutation $x_i \to x_j$, 
\emph{DS}~$(\ref{eq:DS})$ remains chemical 
(respectively, non-chemical).
\end{lemma}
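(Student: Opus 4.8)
The plan is to reduce the whole statement to a pointwise algebraic characterization of chemical monomials and then to check that each of the two transformations acts on monomials in a way that preserves this characterization. First I would record the following observation: a monomial $c\, x_1^{a_1}x_2^{a_2}\cdots x_N^{a_N}$ appearing in $f_i$ is chemical in the sense of Definition~\ref{def:CDS} if and only if either $a_i \ge 1$ (the monomial contains the factor $x_i$) or $c \ge 0$. Indeed, setting $x_i = 0$ with all other $x_j \ge 0$, the monomial vanishes when $a_i \ge 1$, and otherwise equals $c \prod_{j \ne i} x_j^{a_j}$, whose sign is that of $c$ since a product of non-negative factors is non-negative. Consequently $f_i$ is chemical precisely when every one of its monomials satisfies this ``$a_i \ge 1$ or $c \ge 0$'' condition, and the DS is chemical precisely when this holds for all $i$. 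The strategy is therefore to show that both maps preserve, monomial by monomial, both the diagonal exponent $a_i$ and the sign of the coefficient.

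For the rescaling $x_k \to s_k x_k$, I would substitute $x_k = s_k \bar x_k$ (leaving the remaining variables unchanged) into each monomial. A monomial $c \prod_j x_j^{a_j}$ becomes $c\, s_k^{a_k} \prod_j \bar x_j^{a_j}$, so its exponent vector, and in particular the diagonal exponent $a_i$, is unchanged, while its coefficient is multiplied by $s_k^{a_k} > 0$ and hence keeps its sign. The only extra effect is the chain rule: the transformed $k$-th equation carries an overall prefactor $1/s_k > 0$, which again leaves all signs intact. By the characterization above, each monomial's chemical status is preserved, so the transformed $\bar f_i$ is chemical if and only if $f_i$ is; moreover distinct monomials stay distinct, since their exponent vectors are untouched.

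For the permutation $x_i \to x_j$, realized by $\bar x_i = x_j$, $\bar x_j = x_i$ and $\bar x_\ell = x_\ell$ otherwise, I would note that the map simply relabels both the equations and the variables by the transposition $\sigma = (i\,j)$: the new equation for $\bar x_i$ is the old $f_j$ with every $x_\ell$ rewritten as $\bar x_{\sigma(\ell)}$. The chemical test for the new $i$-th equation---non-negativity when $\bar x_i = 0$ and $\bar x_\ell \ge 0$ for $\ell \ne i$---translates, under $\bar x_i = x_j$ and $\bar x_\ell = x_{\sigma(\ell)}$, exactly into the chemical test for the old $j$-th equation (non-negativity when $x_j = 0$ and $x_\ell \ge 0$ for $\ell \ne j$). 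Hence $\bar f_i$ is chemical if and only if $f_j$ is, and since $\sigma$ merely permutes the set of equations, all of them are chemical after the swap if and only if they were before.

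Combining these, each transformation produces a DS that is chemical if and only if the original one is; equivalently, it is non-chemical if and only if the original is, which is the ``respectively'' clause. (Alternatively, this equivalence follows because each map is invertible: rescaling by $s_k$ is inverted by rescaling by $1/s_k$, and the permutation is an involution.) I expect the only genuine bookkeeping to be in the permutation step, where one must track carefully that the diagonal condition ``$x_i = 0$'' is permuted together with the equation index so that it lines up with the $j$-th equation's own diagonal condition; the rescaling step and the underlying monomial characterization are essentially immediate.
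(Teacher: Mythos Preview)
Your proposal is correct and follows essentially the same route as the paper: the paper does not give a formal proof but simply states, just before the lemma, that the claim ``follows directly from Definition~\ref{def:CDS}''. Your argument is exactly that direct verification, carried out in full detail via the algebraic characterization ``$a_i\ge 1$ or $c\ge 0$'' of chemical monomials.
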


In what follows, when convenient, we permute 
and rescale the variables with suitable positive factors, 
which does not change the chemical nature of DSs
by Lemma~$\ref{lemma:scaling_permutation}$.
Furthermore, in $N = 3$ dimensions, 
we let $x = x_1$, $y = x_2$ and $z = x_3$.

\begin{example} 
Consider the three-dimensional quadratic \emph{DS}
\begin{align}
\frac{\mathrm{d} x}{\mathrm{d} t}
& = \frac{1}{5} - \frac{57}{10} x + x y, \nonumber \\
\frac{\mathrm{d} y}{\mathrm{d} t}
& = -x - z, \nonumber \\
\frac{\mathrm{d} z}{\mathrm{d} t}
& = y + \frac{1}{5} z,
\label{eq:Rossler}
\end{align}
obtained via permutation $x \to z$ and then $y \to z$
in R\"ossler system~\emph{\cite{Rossler}}. 
By \emph{Definition}~$\ref{def:CDS_complexity}$,
since $M_0 = 1$, $M_1 = 5$ and $M_2 = 1$,
$(\ref{eq:Rossler})$ is a $(7,1)$ \emph{DS}.
Function $(-x - z)$ contains two non-chemical monomials;
hence, by \emph{Definition}~$\ref{def:CDS}$, 
$(\ref{eq:Rossler})$ is not a \emph{CDS},
and there is no associated \emph{CRN}.
\end{example}

\begin{example}
Consider the three-dimensional quadratic $(9,6)$ \emph{DS}
\begin{align}
\frac{\mathrm{d} x}{\mathrm{d} t}
& = 30 x - \frac{1}{2} x^2 - x y - x z, \nonumber \\
\frac{\mathrm{d} y}{\mathrm{d} t}
& = \frac{33}{2} y - \frac{1}{2} y^2 - x y, \nonumber \\
\frac{\mathrm{d} z}{\mathrm{d} t}
& = - 10 z +  x z, 
\label{eq:Willamowski_Rossler_CDS}
\end{align}
obtained from the minimal Willamowski–R\"ossler 
system~\emph{\cite{RosslerW,MinRosslerW}}
via permutation $y \to z$.
Since each of its equations is chemical,
$(\ref{eq:Willamowski_Rossler_CDS})$ is a \emph{CDS}.
Denoting by $X, Y, Z$ the chemical species with concentrations
$x, y, z$, an associated \emph{CRN} reads
\begin{align}
X & \xrightarrow[]{30} 2 X, \; \; \; 
2 X \xrightarrow[]{1/2} X,  \; \; \; 
X + Y \xrightarrow[]{1} Y, \; \; \; 
X + Z \xrightarrow[]{1} Z, \nonumber \\
Y & \xrightarrow[]{33/2} 2 Y, \; \; \; 
2 Y \xrightarrow[]{1/2} Y, \; \; \; 
X + Y \xrightarrow[]{1} X, \; \; \; 
Z \xrightarrow[]{10} \varnothing, \; \; \; 
X + Z \xrightarrow[]{1} X + 2 Z,
\label{eq:Willamowski_Rossler_canon_CRN}
\end{align}
where $\varnothing$ denotes some species
not explicitly modelled. By \emph{Definition}~$\ref{def:CDS_complexity}$,
since $R_0 = 0$, $R_1 = 3$ and $R_2 = 6$,
$(\ref{eq:Willamowski_Rossler_canon_CRN})$ is a
$(9,6)$ \emph{CRN}. This network is called
the \emph{canonical CRN} of~$(\ref{eq:Willamowski_Rossler_CDS})$;
there also exist other, non-canonical, \emph{CRNs}.
In particular, since the terms $x y$ 
in the first and second equations
of~$(\ref{eq:Willamowski_Rossler_CDS})$ are multiplied,
up to sign, by the same coefficient, 
the two canonical reactions $X + Y \xrightarrow[]{1} Y$
and $X + Y \xrightarrow[]{1} X$ can be \emph{fused}
into a single reaction $X + Y \xrightarrow[]{1} \varnothing$.
Similarly, $X + Z \xrightarrow[]{1} Z$ 
and $X + Z \xrightarrow[]{1} X + 2 Z$ can be fused into
$X + Z \xrightarrow[]{1} 2 Z$. Using these two fused
reactions, one obtains a non-canonical $(7,4)$ \emph{CRN}
\begin{align}
X & \xrightarrow[]{30} 2 X, \; \; \; 
2 X \xrightarrow[]{1/2} X,  \; \; \; 
X + Y \xrightarrow[]{1} \varnothing, \nonumber \\
X + Z & \xrightarrow[]{1} 2 Z, \; \; \; 
Y \xrightarrow[]{33/2} 2 Y, \; \; \; 
2 Y \xrightarrow[]{1/2} Y, \; \; \; 
Z \xrightarrow[]{10} \varnothing.
\label{eq:Rossler_CRN}
\end{align}
See \emph{Appendix~\ref{app:CRN}} for 
more details on how to construct \emph{CRN}s
for any given \emph{CDS}.
\end{example}

\subsection{Chaos} 
Let us now discuss what is meant by chaos, 
starting with the following definition.
\begin{definition} $($\textbf{Invariant set}$)$ 
\label{def:invariant_set} 
Consider a set $\mathbb{V} \subset \mathbb{R}^N$ 
in the state-space $\mathbb{R}^N$ of \emph{DS}~$(\ref{eq:DS})$.
Assume that the following statement holds:
if $\mathbf{x}(0;\mathbf{x}_0) = \mathbf{x}_0 \in \mathbb{V}$, then 
the solution $\mathbf{x}(t;\mathbf{x}_0) \in \mathbb{V}$ 
for all $t \in \mathbb{R}$.
Then, $\mathbb{V}$ is said to be an
\emph{invariant set} of \emph{DS}~$(\ref{eq:DS})$.
Furthermore, if there exists $M > 0$ such that 
$\textrm{\emph{sup}}_{t \in \mathbb{R}} 
\|\mathbf{x}(t; \mathbf{x}_0)\| \le M$ 
for every $\mathbf{x}_0 \in \mathbb{V}$, 
then the invariant set $\mathbb{V}$ is said to be bounded.
\end{definition} 

Broadly speaking, a bounded invariant set $\mathbb{V}$ 
for DS~(\ref{eq:DS}) is \emph{chaotic} if it has the 
following two properties~\cite{Wiggins}:
$(\mathcal{P}_1)$ trajectories in $\mathbb{V}$ 
are ``sensitive to initial conditions'', and
$(\mathcal{P}_2)$ $\mathbb{V}$ is ``irreducible'';
$\mathbb{V}$ is said to be 
a \emph{chaotic attractor} if additionally 
($\mathcal{P}_3$) there exists a neighborhood 
$\mathbb{U} \supset \mathbb{V}$ 
such that if $\mathbf{x}(0;\mathbf{x}_0) = \mathbf{x}_0 \in \mathbb{U}$
then $\mathbf{x}(t;\mathbf{x}_0)$ approaches 
$\mathbb{V}$ as $t \to \infty$. 
Properties $(\mathcal{P}_1)$ and $(\mathcal{P}_2)$ 
can be defined in a number of non-equivalent 
ways~\cite{Chaos_Def}. For example,
$(\mathcal{P}_1)$ can be defined
via existence of a positive 
\emph{Lyapunov characteristic exponent} (LCE)~\cite{Lyapunov,Adrianova},
which implies that nearby trajectories in $\mathbb{V}$ separate
exponentially fast; see Appendix~\ref{app:LCE} for more details.
Property $(\mathcal{P}_2)$ can be defined 
via existence of a trajectory dense in $\mathbb{V}$.
Note that some definitions of chaos
impose additional properties, 
such as existence of periodic trajectories 
that are dense in $\mathbb{V}$,
while other definitions relax certain properties, 
such as not requiring that set 
$\mathbb{V}$ is bounded~\cite{Chaos_Def}.

Definitions of chaos implicitly or explicitly 
exclude equilibria, trajectories connecting equilibria 
and (quasi-)periodic trajectories
from being considered chaotic sets.
Since these are the only candidates for chaos 
in linear DSs of any finite dimension, 
and in two-dimensional DSs of any degree
by the Poincar\'e-Bendixson theorem~\cite{Wiggins}, 
it follows that~(\ref{eq:DS}) can display chaos 
only if dimension $N \ge 3$ and degree $n \ge 2$.
In this context, a number of three-dimensional quadratic DSs
with only one quadratic term have been presented
as ``minimal'' chaotic systems, such as 
the $(7,1)$ R\"ossler system~(\ref{eq:Rossler})
and fourteen $(6,1)$ Sprott systems F--S~\cite{Sprott}[Table 1].

\textbf{Robustness}. 
In this paper, we allow any definition of chaos
as a bounded invariant set
with some properties $\mathcal{P}$;
the only restriction that we impose is that this set, 
and its properties, are suitably robust. To formulate this, 
let us consider a perturbation of DS~(\ref{eq:DS}), given by
\begin{align}
\frac{\mathrm{d} \mathbf{x}}{\mathrm{d} t}
& = \mathbf{f}(\mathbf{x}; n) + \mathbf{p}(\mathbf{x}).
\label{eq:DS_perturbed}
\end{align}

\begin{definition} $($\textbf{Robustness}$)$ 
\label{def:robustness}
Let $\mathbb{V} \subset \mathbb{R}^N$ 
be a bounded invariant set with properties $\mathcal{P}$ 
for \emph{DS}~$(\ref{eq:DS})$.
Assume that there exists a compact set 
$\mathbb{U} \supset \mathbb{V}$ 
such that for every sufficiently small $\varepsilon > 0$ and
every continuously differentiable function
$\mathbf{p}(\mathbf{x})$
with $\textrm{\emph{max}}_{\mathbf{x} \in \mathbb{U}} 
(\|\mathbf{p}(\mathbf{x})\| + 
\|\nabla \mathbf{p}(\mathbf{x})\|) < \varepsilon$
the perturbed \emph{DS}~$(\ref{eq:DS_perturbed})$
has a bounded invariant set $\mathbb{V}' \subset \mathbb{U}$ 
with the same properties $\mathcal{P}$.
Then, invariant set $\mathbb{V}$ of~$(\ref{eq:DS})$ 
and its properties $\mathcal{P}$ are said to be \emph{robust}.
\end{definition}

\section{Theory: Quasi-chemical maps}
\label{sec:DS_to_CDS}
Assume we are given a non-chemical 
DS~(\ref{eq:DS}) with a desired robust invariant set 
in $\mathbb{R}^N$ (see Definition~\ref{def:robustness}), 
and we wish to construct a CDS 
(see Definition~\ref{def:CDS}) of the same dimension and with 
such an invariant set in $\mathbb{R}_{>}^N$. 
To position this set into the positive orthant,
one can appropriately translate 
the dependent variables in~(\ref{eq:DS}).
However, translations (and, more broadly,
affine transformations) alone
do not in general ensure that 
the resulting DS is chemical~\cite{QCM} 
- another non-linear map must in general be applied 
that also preserves robust structures.
A candidate map involves perturbing DS~(\ref{eq:DS})
before suitable translations are applied.

\begin{definition}$($\textbf{\emph{Quasi-chemical map}}$)$ 
\label{def:QCM}
Assume that for polynomial functions
$f_1(\mathbf{x}),f_2(\mathbf{x}),\ldots,$ $f_N(\mathbf{x})$ 
there exist polynomials $p_1(\mathbf{x}; \boldsymbol{\mu}),
p_2(\mathbf{x}; \boldsymbol{\mu}),
\ldots, p_N(\mathbf{x}; \boldsymbol{\mu})$, 
with coefficients smooth in parameters 
$\boldsymbol{\mu} \in \mathbb{R}_{>}^{p}$, 
and a vector $\mathbf{T}(\boldsymbol{\mu}) \in \mathbb{R}_{>}^N$,
satisfying the following properties
for every $i = 1,2,\ldots, N$.
\begin{enumerate}
\item[$(i)$](\textbf{Small perturbations}) 
For every compact set 
$\mathbb{U} \subset \mathbb{R}^N$, 
and for every sufficiently small $\|\boldsymbol{\mu}\|$,
$\textrm{\emph{max}}_{\mathbf{x} \in \mathbb{U}} 
|p_i(\mathbf{x}; \boldsymbol{\mu})|$ 
is arbitrarily small.
\item[$(ii)$] (\textbf{Large translations}) 
For every sufficiently small $\|\boldsymbol{\mu}\|$,
$\|\mathbf{T}(\boldsymbol{\mu})\|$ is arbitrarily large. 
\item[$(iii)$] (\textbf{Chemicality}) 
For every sufficiently small $\|\boldsymbol{\mu}\|$,
the perturbed function 
$f_i(\mathbf{x}) + p_i(\mathbf{x}; \boldsymbol{\mu})$
becomes chemical under the translation
$\mathbf{x} \to (\mathbf{x} - \mathbf{T}(\boldsymbol{\mu}))$.
\end{enumerate}
Then, the map from \emph{DS}~$(\ref{eq:DS})$
to the \emph{CDS} $\mathrm{d} \bar{\mathbf{x}}/\mathrm{d} t = 
\mathbf{f}(\bar{\mathbf{x}} - \mathbf{T}(\boldsymbol{\mu})) + 
\mathbf{p}(\bar{\mathbf{x}} - \mathbf{T}(\boldsymbol{\mu}); 
\boldsymbol{\mu})$ 
is for every sufficiently small $\|\boldsymbol{\mu}\|$
called a \emph{quasi-chemical map 
(QCM)} induced by the perturbations 
$p_1(\mathbf{x}; \boldsymbol{\mu}),$ 
$p_2(\mathbf{x}; \boldsymbol{\mu}),
\ldots, p_N(\mathbf{x}; \boldsymbol{\mu})$, 
and the translation vector 
$\mathbf{T}(\boldsymbol{\mu})$.
\end{definition}

\noindent 
\textbf{Remark}. Consisting only of 
(i) arbitrarily small smooth perturbations, and 
(ii) translations, QCMs preserve robust structures.
Furthermore, since translations can be arbitrarily large, 
every bounded set $\mathbb{U} \subset \mathbb{R}^N$
of DS~(\ref{eq:DS}) can be mapped into $\mathbb{R}_{>}^N$ via QCMs.

This family of non-linear maps is introduced in~\cite{QCM}, 
where it is shown that a particular QCM
\emph{universally} maps polynomial DSs into CDSs. 
In other words, while it is in general not possible to
map a given polynomial DS to a CDS via only translations, 
it is always possible to do so for 
a nearby perturbed polynomial DS; 
we now present this result rigorously.

\begin{theorem}$($\textbf{\emph{Universal QCM}}$)$ 
\label{theorem:universal}
Every $N$-dimensional $n$-degree \emph{DS}~$(\ref{eq:DS})$,
with exactly $M_n$ monomials of degree $n$,
can be mapped via a \emph{QCM} to 
an $N$-dimensional $(n+1)$-degree \emph{CDS},
with exactly $M_{n + 1} = M_n$ monomials of degree $(n+1)$. 
\end{theorem}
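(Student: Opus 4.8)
### Proof Proposal

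The plan is to construct an explicit universal QCM by introducing, for each coordinate direction, a single perturbation monomial together with a large common translation, and then to verify the three defining properties of Definition~\ref{def:QCM}. The key observation is that after translating $\mathbf{x} \to (\mathbf{x} - \mathbf{T})$ with $\mathbf{T} = T(1,1,\ldots,1)^{\top}$ for a large parameter $T > 0$, every monomial of degree $k \le n$ in $f_i$ expands into a polynomial whose lower-order coefficients grow like positive powers of $T$. Concretely, a degree-$n$ monomial that fails to be chemical in the $i$th equation is one in which the variable $x_i$ does \emph{not} appear; after the shift, the constant and low-degree terms it generates carry large coefficients, but these do not by themselves guarantee chemicality at the boundary face $x_i = 0$. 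The central idea I would use is to add, to the $i$th equation, a perturbation of the form $p_i(\mathbf{x};\mu) = \mu\, x_i \prod_{j\ne i} x_j^{?}$ — more precisely a single degree-$(n+1)$ monomial containing $x_i$ to first power — chosen so that, after the shift, its expansion produces a strictly positive constant term dominating the (possibly negative) contributions of $f_i$ on the face $x_i = 0$.

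First I would fix the parametrization. Take a single scalar parameter, writing $\boldsymbol{\mu} = \mu$, and set the translation $\mathbf{T}(\mu) = (1/\mu)\,\mathbf{1}$, so that as $\mu \to 0^+$ we automatically have $\|\mathbf{T}(\mu)\| \to \infty$, securing property~$(ii)$ (Large translations). For the perturbation, in each equation $i$ I would add one new monomial of degree $n+1$ of the form $p_i(\mathbf{x};\mu) = -c_i\,\mu^{s}\, x_i \, g_i(\mathbf{x})$, where $g_i$ is a degree-$n$ monomial in the remaining variables and the exponent $s$ on $\mu$ and the sign/constant $c_i$ are to be chosen. Property~$(i)$ (Small perturbations) then follows provided $s > 0$: on any fixed compact $\mathbb{U}$ the factor $\mu^s$ forces $\max_{\mathbf{x}\in\mathbb{U}} |p_i(\mathbf{x};\mu)| \to 0$ as $\mu \to 0^+$. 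Since exactly one new degree-$(n+1)$ monomial is introduced per equation but the construction must yield exactly $M_{n+1} = M_n$ such monomials in total across the system, I would instead introduce new top-degree monomials only in correspondence with the original degree-$n$ monomials, pairing each degree-$n$ term of $\mathbf{f}$ with exactly one degree-$(n+1)$ correction, thereby matching the count $M_{n+1} = M_n$ exactly.

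The heart of the argument — and the step I expect to be the main obstacle — is verifying property~$(iii)$ (Chemicality). After substituting $x_k \to \bar{x}_k - 1/\mu$, I must show that the $i$th shifted function is chemical, i.e.\ that it is non-negative on the face $\bar{x}_i = 0$ with all other $\bar{x}_j \ge 0$. On this face, the term $x_i = \bar{x}_i - 1/\mu$ equals $-1/\mu$, which is large and negative for small $\mu$; the perturbation $p_i$, carrying the factor $x_i$, then contributes a term whose leading behavior in $1/\mu$ can be made to dominate, with the correct sign, every monomial of $f_i$ that threatens chemicality. The delicate bookkeeping is to expand each monomial via the multinomial theorem, collect the resulting powers of $1/\mu$, and check that on the face $\bar{x}_i = 0$ the most negative powers of $1/\mu$ arising from $f_i$ are cancelled or dominated by the contribution of $p_i$, uniformly in the nonnegative remaining coordinates. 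I would organize this by isolating the constant-in-$\bar{\mathbf{x}}$ part (which scales like the highest power of $1/\mu$ and can be made positive by choosing $c_i$ and the sign appropriately) and then arguing that for all sufficiently small $\mu$ this dominant positive constant overwhelms the bounded-degree remainder on the face.

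Finally, having established $(i)$--$(iii)$, I would invoke Definition~\ref{def:QCM} directly: the three properties are precisely its hypotheses, so the constructed map from~(\ref{eq:DS}) to $\mathrm{d}\bar{\mathbf{x}}/\mathrm{d} t = \mathbf{f}(\bar{\mathbf{x}}-\mathbf{T}(\mu)) + \mathbf{p}(\bar{\mathbf{x}}-\mathbf{T}(\mu);\mu)$ is a QCM for all sufficiently small $\mu$. By construction it raises the degree from $n$ to $n+1$ and produces exactly $M_{n+1} = M_n$ top-degree monomials, and by the Remark following Definition~\ref{def:QCM} it preserves the robust invariant set of~(\ref{eq:DS}), completing the proof. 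The only subtlety requiring care is ensuring the degree-$(n+1)$ monomials introduced are genuinely distinct and that none of them coincides with or cancels an existing monomial after the shift — a point I would address by choosing the auxiliary monomials $g_i$ so that each $x_i g_i(\mathbf{x})$ is distinct and chemical in the $i$th equation by inspection.
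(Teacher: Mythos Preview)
Your proposal has a genuine gap at step~$(iii)$. You identify ``chemical'' with ``non-negative on the face $\bar{x}_i = 0$, $\bar{x}_j \ge 0$'' and then argue by dominance (a large positive constant overwhelms the remainder). But Definition~\ref{def:CDS} is monomial-wise: $f_i$ is chemical only if \emph{every} monomial is non-negative on that face, i.e.\ either contains $\bar{x}_i$ as a factor or has a non-negative coefficient. A function such as $C - \bar{x}_j + \bar{x}_j^2$ is positive on the face for large $C$, yet the monomial $-\bar{x}_j$ remains non-chemical in the $i$th equation. Your dominance argument therefore establishes positivity of the function on the face, not chemicality. Moreover, your perturbation --- one degree-$(n{+}1)$ correction $x_i g_i$ paired with each degree-$n$ term of $f_i$ --- leaves lower-degree non-chemical monomials entirely unaddressed: if $f_i$ contains $-\alpha x_j^{k}$ with $j \ne i$ and $k < n$, then after your shift this still contributes the monomial $-\alpha \bar{x}_j^{k}$, which is non-chemical and not cancelled by anything you have introduced.

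The paper's proof sidesteps all of this with a single factorization you did not find. The perturbation is taken to be $p_i(\mathbf{x};\mu) = \tfrac{\mu}{a_i}\, x_i\, f_i(\mathbf{x})$, so that $f_i + p_i = \bigl(1 + \tfrac{\mu}{a_i} x_i\bigr) f_i$. Under the translation $\bar{x}_i = x_i + a_i/\mu$ the prefactor collapses exactly to $\tfrac{\mu}{a_i}\bar{x}_i$, giving
\[
\frac{\mathrm{d}\bar{x}_i}{\mathrm{d}t} \;=\; \frac{\mu}{a_i}\,\bar{x}_i\, f_i\!\left(\bar{\mathbf{x}} - \frac{\mathbf{a}}{\mu}\right).
\]
Every monomial on the right-hand side carries $\bar{x}_i$ as a factor and is therefore chemical by inspection, with no case analysis or dominance estimate needed. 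The degree-$(n{+}1)$ monomials are precisely $\bar{x}_i$ times the degree-$n$ monomials of $f_i$, so $M_{n+1} = M_n$ is automatic and does not require the delicate distinctness bookkeeping you flag at the end.
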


\begin{proof}
This result is given as~\cite{QCM}[Theorem 4.2];
for reader's convenience, we reproduce the proof. 
In particular, perturbing~(\ref{eq:DS}) via
\begin{align}
\frac{\mathrm{d} x_i}{\mathrm{d} t}
& = f_i(\mathbf{x}) + \frac{\mu}{a_i} x_i f_i(\mathbf{x}), 
\; \; \; i = 1,2,\ldots, N, \label{eq:CDSs_general_1}
\end{align}
for every $\mathbf{a} = (a_1,a_2,\ldots,a_N)^{\top} \in \mathbb{R}_{>}^N$ 
and every $\mu > 0$ sufficiently small,
and then translating via
$\bar{\mathbf{x}} = \mathbf{x} + \mathbf{a}/\mu$, 
one obtains the $N$-dimensional $(n+1)$-degree CDS
\begin{align}
\frac{\mathrm{d} \bar{x}_i}{\mathrm{d} t}
& = \frac{\mu}{a_i} \bar{x}_i f_i 
\left(\bar{\mathbf{x}} - \frac{\mathbf{a}}{\mu}\right), 
\; \; \; i = 1,2,\ldots, N, \label{eq:CDSs_general}
\end{align}
with $M_n$ monomials of degree $(n+1)$.
By Definition~\ref{def:QCM}, 
for every $\mathbf{a} \in \mathbb{R}_{>}^N$
there exists $\mu_0 > 0$ such that
for every $\mu \in (0,\mu_0)$ the map from 
DS~(\ref{eq:DS}) to the CDS~(\ref{eq:CDSs_general})
is a QCM induced by the perturbations
$p_i(\mathbf{x}; \mu) = \mu x_i f_i(\mathbf{x})/a_i$,
$i = 1,2,\ldots,N$, and the translation vector 
$\mathbf{T}(\mu) = \mathbf{a}/\mu$.
\end{proof}

\textbf{Splitting}. To systematically construct QCMs, 
let us split the $i$th equation from~(\ref{eq:DS}) as follows:
\begin{align}
\frac{\mathrm{d} x_i}{\mathrm{d} t}
& = f_i(\mathbf{x}) =
\sum_{j = 1}^M f_{i,j}(\mathbf{x}),
\label{eq:splitting}
\end{align}
where $f_{i,j}(\mathbf{x})$ are arbitrary polynomials.
Assume that one constructs a perturbation
and translation satisfying Definition~\ref{def:QCM}(i)--(iii) 
for each of the functions $f_{i,j}(\mathbf{x})$ separately. 
If all of the translation vectors are identical, 
then, by simply adding all of the perturbations
to the full equation~(\ref{eq:splitting}),
one obtains a chemical equation under the common translation.
We now formalize this basic result.

\begin{lemma} $($\textbf{\emph{Splitting lemma}}$)$ 
\label{lemma:splitting}
Assume that, for every $j = 1,2,\ldots,M$,
the function 
$f_{i,j}(\mathbf{x}) + p_{i,j}(\mathbf{x}; \boldsymbol{\mu})$ 
becomes chemical under the translation 
$\mathbf{x} \to (\mathbf{x} - \mathbf{T}(\boldsymbol{\mu}))$ 
for every sufficiently small $\|\boldsymbol{\mu}\|$. 
Let $f_i(\mathbf{x})$ be the function defined 
in~$(\ref{eq:splitting})$,
and $p_i(\mathbf{x}; \boldsymbol{\mu}) = 
\sum_{j = 1}^M p_{i,j}(\mathbf{x}; \boldsymbol{\mu})$.
Then, the function 
$f_i(\mathbf{x}) + p_i(\mathbf{x}; \boldsymbol{\mu})$ 
also becomes chemical under 
$\mathbf{x} \to (\mathbf{x} - \mathbf{T}(\boldsymbol{\mu}))$
for every sufficiently small $\|\boldsymbol{\mu}\|$.
\end{lemma}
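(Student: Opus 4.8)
The plan is to reduce the claim to a monomial-level characterization of chemicality and then observe that this characterization is closed under addition of finitely many functions. First I would unwind what ``becomes chemical under the translation $\mathbf{x} \to (\mathbf{x} - \mathbf{T}(\boldsymbol{\mu}))$'' means at the level of individual monomials. Writing $\bar{\mathbf{x}} = \mathbf{x} + \mathbf{T}(\boldsymbol{\mu})$ and expanding, a function viewed as the $i$th component is chemical precisely when each of its monomials $c\, \bar{x}_1^{a_1} \cdots \bar{x}_N^{a_N}$ is chemical. By Definition~\ref{def:CDS}, such a monomial is chemical iff it is non-negative when $\bar{x}_i = 0$ and $\bar{x}_k \ge 0$ for $k \ne i$: if $a_i \ge 1$ the monomial equals $0$ at $\bar{x}_i = 0$ and is automatically chemical, whereas if $a_i = 0$ it reduces to $c \prod_{k \ne i} \bar{x}_k^{a_k}$, which is non-negative for all $\bar{x}_k \ge 0$ iff its coefficient satisfies $c \ge 0$. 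Thus chemicality of the $i$th component is equivalent to the purely algebraic condition that every monomial not containing the factor $\bar{x}_i$ has a non-negative coefficient.

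With this characterization in hand, the key step is additivity. Set $g_{i,j}(\bar{\mathbf{x}}; \boldsymbol{\mu}) = f_{i,j}(\bar{\mathbf{x}} - \mathbf{T}(\boldsymbol{\mu})) + p_{i,j}(\bar{\mathbf{x}} - \mathbf{T}(\boldsymbol{\mu}); \boldsymbol{\mu})$, which by hypothesis is chemical for every sufficiently small $\|\boldsymbol{\mu}\|$, so each monomial of $g_{i,j}$ omitting $\bar{x}_i$ has a non-negative coefficient. Since $f_i = \sum_{j=1}^M f_{i,j}$ by~$(\ref{eq:splitting})$ and $p_i = \sum_{j=1}^M p_{i,j}$ by construction, the translated-and-perturbed $i$th component equals $\sum_{j=1}^M g_{i,j}$. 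For any fixed monomial with $a_i = 0$, its coefficient in this sum is the sum of its coefficients across the $g_{i,j}$, each of which is non-negative; hence the total coefficient is non-negative, so every monomial of $\sum_{j} g_{i,j}$ omitting $\bar{x}_i$ is chemical, while monomials containing $\bar{x}_i$ are chemical automatically. Therefore $f_i + p_i$ becomes chemical under the translation $\mathbf{x} \to (\mathbf{x} - \mathbf{T}(\boldsymbol{\mu}))$.

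The only point requiring care is the uniformity of ``sufficiently small $\|\boldsymbol{\mu}\|$''. Each hypothesis supplies its own threshold $\mu_0^{(j)} > 0$ below which $g_{i,j}$ is chemical; since there are only finitely many indices $j = 1, \ldots, M$, taking $\mu_0 = \min_{j} \mu_0^{(j)} > 0$ yields a single threshold below which all the $g_{i,j}$ are simultaneously chemical, and the additivity argument then applies for every $\|\boldsymbol{\mu}\| < \mu_0$. I do not anticipate a genuine obstacle here, as the result is essentially the observation that non-negativity of coefficients is preserved under finite sums. The one thing to state cleanly is the monomial-level reformulation of chemicality, since Definition~\ref{def:CDS} is phrased pointwise (via $m(\mathbf{x}) \ge 0$) rather than in terms of coefficient signs, and it is precisely this reformulation that makes the additivity transparent.
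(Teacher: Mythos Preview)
Your proposal is correct and follows essentially the same approach as the paper: translate, decompose the translated function as the sum $\sum_j g_{i,j}$, and conclude that a finite sum of chemical functions is chemical. The paper's proof simply asserts this last step by invoking Definition~\ref{def:CDS}, whereas you spell it out at the monomial level (coefficients of monomials not containing $\bar{x}_i$ are non-negative and hence stay non-negative under finite sums) and also make the uniform threshold $\mu_0 = \min_j \mu_0^{(j)}$ explicit; both of these are refinements rather than a different argument.
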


\begin{proof}
Applying the translation 
$\bar{\mathbf{x}} = \mathbf{x} + \mathbf{T}(\boldsymbol{\mu})$
on $f_i(\mathbf{x}) + p_i(\mathbf{x}; \boldsymbol{\mu})$, 
one obtains 
\begin{align}
f_i \left(\bar{\mathbf{x}} - \mathbf{T}(\boldsymbol{\mu})\right) 
+ p_i\left(\bar{\mathbf{x}} - \mathbf{T}(\boldsymbol{\mu}); 
\boldsymbol{\mu}\right)
& = \sum_{j = 1}^M \left[f_{i,j}\left(\bar{\mathbf{x}} - \mathbf{T}(\boldsymbol{\mu})\right) + p_{i,j}\left(\bar{\mathbf{x}} - \mathbf{T}(\boldsymbol{\mu}); \boldsymbol{\mu}\right) \right].
\label{eq:splitting_lemma}
\end{align}
By assumption, each of the summands is chemical
for every sufficiently small $\|\boldsymbol{\mu}\|$.
Being a sum of chemical functions,
the left-hand side in~(\ref{eq:splitting_lemma})
is then also chemical by Definition~\ref{def:CDS}.
\end{proof}

The QCM from Theorem~\ref{theorem:universal}
adds perturbations of identical form, 
with a single perturbation parameter, 
to every equation, and is applicable to every polynomial DS;
however, this universality comes at a cost: 
a larger number of non-linear monomials can be introduced. 
In this section, we construct QCMs tailored 
to introduce a lower number 
of quadratic and cubic monomials;
to achieve this, perturbations 
of different form in general, with multiple perturbation parameters,
are added to different equations.
In particular, in Section~\ref{sec:linear}, 
we first construct some QCMs refined to introduce 
a lower number of quadratics when applied on linear DSs.
Then, in Section~\ref{sec:quadratic}, we present 
some QCMs that introduce a lower number of 
cubics when applied on quadratic DSs.
To reduce the number of both quadratics and cubics,
these two classes of QCMs can be combined
via suitable splittings as per Lemma~\ref{lemma:splitting}.

\subsection{Linear DSs}
\label{sec:linear}
Consider DS~(\ref{eq:DS}) with $n = 1$, which can be written as
\begin{align}
\frac{\mathrm{d} x_i}{\mathrm{d} t}
& = f_i(\mathbf{x}; 1) 
= \alpha_{i,0} + \sum_{j = 1}^N \alpha_{i,j} x_j,
\; \; \; i = 1,2,\ldots,N.
\label{eq:DS_linear}
\end{align}
Let us consider a special case,
when the $i$th equation takes the form
\begin{align}
\frac{\mathrm{d} x_i}{\mathrm{d} t}
& = l_i(\mathbf{x}) 
= \alpha_{i,0} 
+ \alpha_{i,i} x_i
+ \sum_{j \in I_i} \alpha_{i,j} x_j,
\label{eq:i_linear}
\end{align}
where the set $I_i$ contains only indices $j \ne i$ 
such that $\alpha_{i,j} > 0$.

\begin{lemma}$($\textbf{\emph{Chemicality for~$(\ref{eq:i_linear})$}}$)$ 
\label{lemma:DS_linear_1}
For every $\varepsilon > 0$
and $\mathbf{a} \in \mathbb{R}_{>}^N$
there exists $\mu_0 > 0$ such that for 
every $\mu \in (0,\mu_0)$ the perturbed equation
\begin{align}
\frac{\mathrm{d} x_i}{\mathrm{d} t}
& = \alpha_{i,0} 
+ \alpha_{i,i} x_i 
+ \sum_{j \in I_i} \alpha_{i,j} x_j
+ \varepsilon x_i^2,
\label{eq:i_linear_p1}
\end{align}
becomes chemical under the translation
$\mathbf{x} \to (\mathbf{x} - \mathbf{a}/\mu)$.
\end{lemma}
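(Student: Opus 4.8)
The plan is to apply the prescribed translation to the right-hand side of~(\ref{eq:i_linear_p1}), expand everything into monomials in the new variable, and then verify chemicality monomial-by-monomial using Definition~\ref{def:CDS}. Writing $T_j = a_j/\mu$ and substituting $x_j = \bar{x}_j - T_j$, the right-hand side becomes a quadratic polynomial in $\bar{\mathbf{x}}$ whose monomials are: the quadratic $\varepsilon \bar{x}_i^2$; a linear term $(\alpha_{i,i} - 2\varepsilon T_i)\,\bar{x}_i$ in $\bar{x}_i$; the linear terms $\alpha_{i,j}\bar{x}_j$ for $j \in I_i$; and a constant term $c(\mu) = \alpha_{i,0} - \alpha_{i,i}T_i - \sum_{j\in I_i}\alpha_{i,j}T_j + \varepsilon T_i^2$.

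The conceptual crux is the observation that any monomial divisible by $\bar{x}_i$ is automatically chemical for the $i$th equation, since it vanishes when $\bar{x}_i = 0$; this immediately settles both $\varepsilon\bar{x}_i^2$ and the linear term $(\alpha_{i,i} - 2\varepsilon T_i)\,\bar{x}_i$, irrespective of the sign of its coefficient. Next, the terms $\alpha_{i,j}\bar{x}_j$ with $j\in I_i$ are chemical because, by the very definition of $I_i$, each $\alpha_{i,j} > 0$, so $\alpha_{i,j}\bar{x}_j \ge 0$ whenever $\bar{x}_j \ge 0$. Thus the only monomial that can violate chemicality is the constant term $c(\mu)$, which is chemical precisely when $c(\mu) \ge 0$.

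It then remains to analyze $c(\mu)$. Substituting $T_j = a_j/\mu$ gives
\[
c(\mu) = \varepsilon\,\frac{a_i^2}{\mu^2} - \frac{1}{\mu}\left(\alpha_{i,i}a_i + \sum_{j\in I_i}\alpha_{i,j}a_j\right) + \alpha_{i,0}.
\]
Since $\varepsilon > 0$ and $a_i > 0$, the leading $\mu^{-2}$ contribution is strictly positive and dominates the $\mu^{-1}$ and constant contributions as $\mu \to 0^{+}$; hence there exists $\mu_0 > 0$ such that $c(\mu) > 0$ for all $\mu \in (0,\mu_0)$. Collecting the monomial-wise conclusions, for every $\mu \in (0,\mu_0)$ the translated right-hand side is a sum of chemical monomials, so by Definition~\ref{def:CDS} it is chemical, as claimed.

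I do not anticipate a genuine obstacle: the entire content reduces to the single observation that the $\varepsilon x_i^2$ perturbation injects a term of order $\varepsilon a_i^2/\mu^2$ into the constant, which outpaces the destabilizing $O(1/\mu)$ contribution produced by translating the linear part. The only point meriting a word of care is that the linear-in-$\bar{x}_i$ term remains chemical even though its coefficient $(\alpha_{i,i} - 2\varepsilon T_i)$ is negative for small $\mu$ — but this is immediate from the vanishing-at-$\bar{x}_i=0$ criterion, so no sign condition on $\alpha_{i,i}$ is needed.
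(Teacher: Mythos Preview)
Your proof is correct and follows essentially the same approach as the paper: both apply the translation $\bar{\mathbf{x}} = \mathbf{x} + \mathbf{a}/\mu$, expand to obtain the identical quadratic polynomial in $\bar{\mathbf{x}}$, and then invoke Definition~\ref{def:CDS} to conclude chemicality for small $\mu$. Your version is simply more explicit in checking the monomials one by one (in particular, noting that the $\bar{x}_i$- and $\bar{x}_i^2$-terms are automatically chemical regardless of sign, and that the constant term is eventually positive because the $\varepsilon a_i^2/\mu^2$ contribution dominates), whereas the paper states chemicality in a single sentence after displaying the translated equation.
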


\begin{proof}
Translating via $\bar{\mathbf{x}} = \mathbf{x} + \mathbf{a}/\mu$
in~(\ref{eq:i_linear_p1}), one obtains the equation
\begin{align}
\frac{\mathrm{d} \bar{x}_i}{\mathrm{d} t}
& = \left[\frac{\varepsilon a_i^2}{\mu^2} - \frac{1}{\mu} 
\left(\alpha_{i,i} a_i + \sum_{j \in I_i} \alpha_{i,j} a_j \right) 
+  \alpha_{i,0} \right] 
+ \left(\alpha_{i,i} - \frac{2 \varepsilon a_i}{\mu} \right) \bar{x}_i 
+ \sum_{j \in I_i} \alpha_{i,j} \bar{x}_j
 + \varepsilon \bar{x}_i^2,
\label{eq:case_2_2}
\end{align} 
which is chemical for every sufficiently small $\mu > 0$ 
by Definition~\ref{def:CDS}. 
\end{proof}

By Theorem~\ref{theorem:universal}, 
DS~(\ref{eq:DS_linear}) is mapped via the universal QCM to 
a quadratic CDS with exactly $M_1$
quadratics. Using a suitable splitting 
as per Lemma~\ref{lemma:splitting},
we now construct a QCM that 
can lead to a lower number of quadratics. 
To this end, let $M_{1,i}^{-}$
be the total number of terms 
$\alpha_{i,j} x_j$ with $j \ne i$
and $\alpha_{i,j} < 0$ in the $i$th 
equation of~(\ref{eq:DS_linear}); 
we then let $M_1^{-} = \sum_{i = 1}^N M_{1,i}^{-}$.

\begin{theorem}
\label{theorem:DS_linear}
Every $N$-dimensional linear \emph{DS}~$(\ref{eq:DS_linear})$,
with exactly $M_1^{-}$ non-chemical first-degree monomials,
can be mapped via a \emph{QCM} to 
an $N$-dimensional quadratic \emph{CDS}
with $M_2 \le (M_1^{-} + N)$ quadratic monomials.
\end{theorem}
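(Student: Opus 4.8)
The plan is to build the QCM by the splitting technique of Lemma~$\ref{lemma:splitting}$, using a single common translation $\mathbf{T}(\mu)=\mathbf{a}/\mu$ for a fixed $\mathbf{a}\in\mathbb{R}_{>}^N$. I would decompose the $i$th equation as $f_i(\mathbf{x})=l_i(\mathbf{x})+\sum_{j\in J_i}\alpha_{i,j}x_j$, where $l_i(\mathbf{x})=\alpha_{i,0}+\alpha_{i,i}x_i+\sum_{j\in I_i}\alpha_{i,j}x_j$ is exactly of the form~$(\ref{eq:i_linear})$ and $J_i=\{j\ne i:\alpha_{i,j}<0\}$ indexes the $M_{1,i}^{-}$ non-chemical first-degree monomials. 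The governing idea is that the whole ``good'' part $l_i$, which carries the diagonal, the constant, and all positive off-diagonal terms at once, can be made chemical at the cost of a single quadratic, whereas each ``bad'' monomial is made chemical by its own single quadratic.

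For the good part I would invoke Lemma~$\ref{lemma:DS_linear_1}$: adding $\varepsilon_i x_i^2$ makes $l_i+\varepsilon_i x_i^2$ chemical under $\mathbf{x}\to(\mathbf{x}-\mathbf{a}/\mu)$ for small $\mu$, so each equation's good part costs at most one quadratic. For a bad monomial $\alpha_{i,j}x_j$ (with $\alpha_{i,j}<0$, $j\ne i$) I would apply the universal perturbation of Theorem~$\ref{theorem:universal}$ to this single term, perturbing by $p_{i,j}(\mathbf{x};\mu)=(\mu\alpha_{i,j}/a_i)\,x_i x_j$. Writing $\alpha_{i,j}x_j+(\mu\alpha_{i,j}/a_i)x_i x_j=\alpha_{i,j}x_j(1+(\mu/a_i)x_i)$ and translating via $\bar{\mathbf{x}}=\mathbf{x}+\mathbf{a}/\mu$, the bracket collapses to $(\mu/a_i)\bar{x}_i$, so the term becomes $(\mu\alpha_{i,j}/a_i)\bar{x}_i\bar{x}_j-(\alpha_{i,j}a_j/a_i)\bar{x}_i$; since both resulting monomials carry the factor $\bar{x}_i$ and sit in the $i$th equation, they vanish at $\bar{x}_i=0$ and are therefore chemical, while exactly one quadratic $\bar{x}_i\bar{x}_j$ is introduced. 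Because every piece uses the same translation $\mathbf{a}/\mu$, Lemma~$\ref{lemma:splitting}$ then certifies that $f_i+p_i$, with $p_i=\varepsilon_i x_i^2+\sum_{j\in J_i}(\mu\alpha_{i,j}/a_i)x_i x_j$, is chemical, so the translated system is a CDS.

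The counting is then immediate. Translating the linear $f_i$ itself produces no quadratics, so within the $i$th equation the only quadratics are $\bar{x}_i^2$ and the distinct cross terms $\bar{x}_i\bar{x}_j$, $j\in J_i$; hence equation $i$ contributes at most $1+M_{1,i}^{-}$ quadratic monomials and $M_2\le\sum_{i=1}^N(1+M_{1,i}^{-})=N+M_1^{-}$. To confirm a QCM in the sense of Definition~$\ref{def:QCM}$, the translation $\mathbf{a}/\mu$ is arbitrarily large as $\mu\to0$, and the cross-term perturbations, being proportional to $\mu$, are arbitrarily small on compacts.

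The step I expect to require the most care is reconciling the smallness axiom~(i) with the chemicality axiom~(iii) for the good part. Lemma~$\ref{lemma:DS_linear_1}$ secures chemicality by forcing the translated constant term, which contains the contribution $\varepsilon_i a_i^2/\mu^2$, to be positive for small $\mu$ with $\varepsilon_i$ fixed; but axiom~(i) demands $\varepsilon_i x_i^2\to0$ on compacts, i.e.\ $\varepsilon_i\to0$. I would resolve this tension by coupling the parameters through the single scalar $\mu$, setting $\varepsilon_i=\mu c_i$ with $c_i>0$ chosen so that $c_i a_i^2>\alpha_{i,i}a_i+\sum_{j\in I_i}\alpha_{i,j}a_j$. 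Re-running the computation behind Lemma~$\ref{lemma:DS_linear_1}$, the translated constant term then behaves like $(c_i a_i^2-\alpha_{i,i}a_i-\sum_{j\in I_i}\alpha_{i,j}a_j)/\mu\to+\infty$, preserving chemicality, while the perturbation $\mu c_i x_i^2\to0$; thus a single $\mu\to0$ drives the entire construction and all QCM axioms hold simultaneously.
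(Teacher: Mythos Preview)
Your proof is correct and follows essentially the same approach as the paper: the same splitting $f_i=l_i+r_i$ into the ``good'' part handled by Lemma~\ref{lemma:DS_linear_1} and the ``bad'' part handled by the universal perturbation of Theorem~\ref{theorem:universal}, combined via Lemma~\ref{lemma:splitting}, with the identical quadratic count $1+M_{1,i}^{-}$ per equation. The paper keeps $\varepsilon$ and $\mu$ as two separate QCM parameters (choosing $\varepsilon$ small first, then $\mu$ small depending on $\varepsilon$), whereas your final paragraph couples them via $\varepsilon_i=\mu c_i$; this is a harmless technical variant and not a genuinely different argument.
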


\begin{proof}
Let us split DS~(\ref{eq:DS_linear}) as follows:
\begin{align}
\frac{\mathrm{d} x_i}{\mathrm{d} t}
& = f_i(\mathbf{x}; 1) = l_i(\mathbf{x}) + r_i(\mathbf{x}),
\; \; \; i = 1,2,\ldots, N,
\label{eq:DS_linear_split}
\end{align}
with $l_i(\mathbf{x})$ as in~(\ref{eq:i_linear}),
with set $I_i$ containing \emph{all} the 
indices $j \ne i$ from~(\ref{eq:DS_linear}) 
such that $\alpha_{i,j} > 0$,
and the remainder 
$r_i(\mathbf{x}) = f_i(\mathbf{x}; 1) - l_i(\mathbf{x})$.
Then, for every sufficiently small $\varepsilon > 0$
and for every $\mathbf{a} \in \mathbb{R}_{>}^N$
there exists $\mu_0 > 0$ such that for 
every $\mu \in (0,\mu_0)$ the perturbed equations
\begin{align}
\frac{\mathrm{d} x_i}{\mathrm{d} t}
& = l_i(\mathbf{x}) + r_i(\mathbf{x})
+ \varepsilon x_i^2 + \frac{\mu}{a_i} x_i r_i(\mathbf{x}),
\; \; \; i = 1,2,\ldots, N,
\label{eq:DS_linear_split}
\end{align}
become chemical under the translation 
$\mathbf{x} \to (\mathbf{x} - \mathbf{a}/\mu)$ 
by Lemma~\ref{lemma:DS_linear_1}, 
Theorem~\ref{theorem:universal}
and Lemma~\ref{lemma:splitting}. 
By Definition~\ref{def:QCM}, 
the perturbations 
$p_i(\mathbf{x}; \varepsilon, \mu)
= \varepsilon x_i^2 + \mu x_i r_i(\mathbf{x})/a_i$, 
$i = 1,2,\ldots,N$,
and the translation vector $\mathbf{a}/\mu$,
then induce a QCM for DS~(\ref{eq:DS_linear}). 
Every equation from~$(\ref{eq:DS_linear_split})$
has $(M_{1,i}^{-} + 1)$ quadratics;
hence, the corresponding CDS has 
$M_2 = \sum_{i = 1}^N (M_{1,i}^{-} + 1) 
= (M_{1}^{-} + N)$ quadratics.
\end{proof}

The perturbation from Lemma~\ref{lemma:DS_linear_1}
is quadratic. We now show that, if 
$\alpha_{i,i} \le 0$ in~(\ref{eq:i_linear}), 
then a linear perturbation is sufficient.

\begin{lemma}$($\textbf{\emph{Chemicality for~$(\ref{eq:i_linear})$
with $\alpha_{i,i} \le 0$}}$)$ 
\label{lemma:DS_linear_2}
Assume that $\alpha_{i,i} \le 0$ in~$(\ref{eq:i_linear})$.
Assume also that parameters $\mathbf{a} \in \mathbb{R}_{>}^N$ 
are chosen to satisfy
\begin{align}
(-\alpha_{i,i} + A_{i,i} \varepsilon) a_i 
 & > \sum_{j \in I_i} \alpha_{i,j} a_j,
\label{eq:linear_constraint}
\end{align}
for every sufficiently small $\varepsilon > 0$, 
where $A_{i,i} = 0$ if $\alpha_{i,i} < 0$
and $A_{i,i} = 1$ if $\alpha_{i,i} = 0$.
Then, for every sufficiently small $\varepsilon > 0$
there exists $\mu_0 > 0$ such that
for every $\mu \in (0,\mu_0)$
the perturbed equation
\begin{align}
\frac{\mathrm{d} x_i}{\mathrm{d} t}
& = \alpha_{i,0} 
+ \alpha_{i,i} x_i 
+ \sum_{j \in I_i} \alpha_{i,j} x_j 
- A_{i,i} \varepsilon x_i,
\label{eq:i_linear_p2}
\end{align}
becomes chemical under the translation
$\mathbf{x} \to (\mathbf{x} - \mathbf{a}/\mu)$.
\end{lemma}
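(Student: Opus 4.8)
The plan is to mirror the proof of Lemma~\ref{lemma:DS_linear_1}, but to track that here a merely \emph{linear} perturbation suffices. First I would combine the two monomials proportional to $x_i$ in~(\ref{eq:i_linear_p2}), so that the perturbed right-hand side reads $\alpha_{i,0} + (\alpha_{i,i} - A_{i,i}\varepsilon)x_i + \sum_{j \in I_i}\alpha_{i,j}x_j$, with effective diagonal coefficient $(\alpha_{i,i} - A_{i,i}\varepsilon)$. I would then substitute the translation $x_k = \bar{x}_k - a_k/\mu$. Because the equation is linear, this creates no new monomials (in contrast to~(\ref{eq:case_2_2}), where the quadratic perturbation produced the extra term $\varepsilon \bar{x}_i^2$): the outcome is a constant term, the monomial $(\alpha_{i,i} - A_{i,i}\varepsilon)\bar{x}_i$, and the monomials $\alpha_{i,j}\bar{x}_j$ for $j \in I_i$.

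Next I would verify chemicality monomial-by-monomial against Definition~\ref{def:CDS}. The monomial $(\alpha_{i,i} - A_{i,i}\varepsilon)\bar{x}_i$ carries the factor $\bar{x}_i$, hence vanishes at $\bar{x}_i = 0$ and is chemical irrespective of the sign of its coefficient. Each $\alpha_{i,j}\bar{x}_j$ with $j \in I_i$ is free of $\bar{x}_i$, but has coefficient $\alpha_{i,j} > 0$ by the very definition of $I_i$, and is therefore chemical. Consequently the only monomial that can obstruct chemicality is the constant term, which absorbs every translation contribution and equals $\alpha_{i,0} + (1/\mu)\,[\,(-\alpha_{i,i} + A_{i,i}\varepsilon)a_i - \sum_{j \in I_i}\alpha_{i,j}a_j\,]$.

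The crux, and precisely the reason constraint~(\ref{eq:linear_constraint}) is imposed, is to force this constant to be positive. Fixing $\varepsilon > 0$ small enough that~(\ref{eq:linear_constraint}) holds, the bracketed quantity is strictly positive and independent of $\mu$; divided by $\mu$, the constant term then diverges to $+\infty$ as $\mu \to 0^{+}$, so there exists $\mu_0 > 0$ for which it is positive on all of $(0,\mu_0)$, regardless of the sign of $\alpha_{i,0}$. Chemicality then follows by Definition~\ref{def:CDS}. I expect the only subtlety to be the bookkeeping role of $A_{i,i}$ rather than any hard estimate: when $\alpha_{i,i} < 0$ one has $A_{i,i} = 0$, the perturbation is absent, and~(\ref{eq:linear_constraint}) reduces to $-\alpha_{i,i}\,a_i > \sum_{j \in I_i}\alpha_{i,j}a_j$, satisfiable by taking $a_i$ large, so the untouched equation is already chemical after translation; when $\alpha_{i,i} = 0$ one sets $A_{i,i} = 1$, and the linear perturbation $-\varepsilon x_i$ manufactures the strictly negative effective diagonal $-\varepsilon$ needed to keep the bracket positive. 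I would flag that the latter case relies on~(\ref{eq:linear_constraint}) being satisfiable for small $\varepsilon$, which constrains the admissible $I_i$; outside this regime one reverts to the quadratic perturbation of Lemma~\ref{lemma:DS_linear_1}.
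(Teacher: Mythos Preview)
Your proposal is correct and follows essentially the same route as the paper's proof: translate, write out the resulting constant and linear monomials, and use constraint~(\ref{eq:linear_constraint}) to show the constant term is positive for all sufficiently small $\mu$, while the remaining monomials are chemical by inspection. Your version is in fact more detailed than the paper's, which compresses the monomial-by-monomial check into a single appeal to Definition~\ref{def:CDS}.
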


\begin{proof}
Translating via $\bar{\mathbf{x}} = \mathbf{x} + \mathbf{a}/\mu$
in~(\ref{eq:i_linear_p2}), one obtains the equation
\begin{align}
\frac{\mathrm{d} \bar{x}_i}{\mathrm{d} t}
& = \left[ - \frac{1}{\mu} 
\left((\alpha_{i,i} - A_{i,i} \varepsilon) a_i 
+ \sum_{j \in I_i} \alpha_{i,j} a_j \right) 
+  \alpha_{i,0} \right] 
+ (\alpha_{i,i} - A_{i,i} \varepsilon) \bar{x}_i 
+ \sum_{j \in I_i} \alpha_{i,j} \bar{x}_j,
\label{eq:case_1}
\end{align}
which, using~(\ref{eq:linear_constraint}), 
is chemical for every sufficiently small $\mu > 0$ 
by Definition~\ref{def:CDS}. 
\end{proof}

\noindent
\textbf{Remark}. Lemma~\ref{lemma:DS_linear_2}
implies that, if the $i$th equation 
from~(\ref{eq:DS_linear}) has $\alpha_{i,i} \le 0$, 
then, under the constraint~(\ref{eq:linear_constraint}),
one can take $p_i(\mathbf{x}; \varepsilon, \mu)
= - A_{i,i} \varepsilon x_i + \mu x_i r_i(\mathbf{x})/a_i$
in Theorem~\ref{theorem:DS_linear}
to obtain a new QCM achieving a CDS
with $M_2 = (M_{1}^{-} + N - 1)$ quadratics.
In the extreme case when this can be 
done for every equation of~(\ref{eq:DS_linear}), 
one obtains a CDS with only $M_2 = M_{1}^{-}$ quadratics.

We close this section by constructing a QCM
for a three-dimensional example;
for notational simplicity, we let $x = x_1$, $y = x_2$, 
$z = x_3$, and $a = a_1$, $b = a_2$ and $c = a_3$.

\begin{example}
\label{ex:linear}
Consider the linear \emph{DS}
\begin{align}
\frac{\mathrm{d} x}{\mathrm{d} t}
& = f_1(x) = \frac{1}{5} - \frac{57}{10} x, \nonumber \\
\frac{\mathrm{d} y}{\mathrm{d} t}
& = f_2(x,z) = x + z, \nonumber \\
\frac{\mathrm{d} z}{\mathrm{d} t}
& = f_3(y,z) = - y + \frac{z}{5},
\label{eq:Rossler_reflected_pre}
\end{align} 
with $M_1 = 5$ and $M_1^{-} = 1$.
Under the universal \emph{QCM} from 
\emph{Theorem}~$\ref{theorem:universal}$, 
$(\ref{eq:Rossler_reflected_pre})$ maps to
\begin{align}
\frac{\mathrm{d} \bar{x}}{\mathrm{d} t}
& = \frac{\mu}{a} \bar{x}
\left[\left(\frac{57}{10} \frac{a}{\mu} + \frac{1}{5} \right) 
- \frac{57}{10} \bar{x} \right], \nonumber \\
\frac{\mathrm{d} \bar{y}}{\mathrm{d} t}
& = \frac{\mu}{b} \bar{y}
\left[-\frac{(a + c)}{\mu} + \bar{x} + \bar{z} \right], 
\nonumber \\
\frac{\mathrm{d} \bar{z}}{\mathrm{d} t}
& = \frac{\mu}{c} \bar{z}
\left[\frac{1}{\mu} \left(b - \frac{c}{5} \right)- 
\bar{y} + \frac{\bar{z}}{5}\right],
\label{eq:R_example_1}
\end{align} 
which, by \emph{Definition}~$\ref{def:CDS_complexity}$, 
is a quadratic $(8,5)$ \emph{CDS}, 
i.e. it has $M_2 = 5$ quadratics.

Let us now apply the results from this section 
to construct a \emph{CDS} with only $M_2 = 2$ quadratics.
To this end, 
let us split~$(\ref{eq:Rossler_reflected_pre})$ as follows:
$f_1(x) = l_1(x) = 1/5 - 57 x/10$, 
$f_2(x,z) = l_2(x,z) = x + z$ and
$f_3(y,z) = r_3(y,z) = - y + z/5$.
Perturbing $l_1(x)$ and $l_2(x,z)$
according to \emph{Lemma}~$\ref{lemma:DS_linear_2}$,
and $r_3(y,z)$ according to 
\emph{Theorem}~$\ref{theorem:universal}$, one obtains 
\begin{align}
\frac{\mathrm{d} x}{\mathrm{d} t}
& = \frac{1}{5} - \frac{57}{10} x, \nonumber \\
\frac{\mathrm{d} y}{\mathrm{d} t}
& = x + z - \varepsilon y, \nonumber \\
\frac{\mathrm{d} z}{\mathrm{d} t}
& = - y + \frac{z}{5}
+ \frac{\mu}{c} z \left[- y + \frac{z}{5} \right].
\label{eq:Rossler_reflected_pre_p}
\end{align}
Under the translation
$(\bar{x},\bar{y},\bar{z}) = (x,y,z) + (a,b,c)/\mu$, 
the perturbed \emph{DS}~$(\ref{eq:Rossler_reflected_pre_p})$ becomes 
\begin{align}
\frac{\mathrm{d} \bar{x}}{\mathrm{d} t}
& = \left(\frac{57}{10} \frac{a}{\mu} + \frac{1}{5} \right) 
- \frac{57}{10} \bar{x}, \nonumber \\
\frac{\mathrm{d} \bar{y}}{\mathrm{d} t}
& = \frac{1}{\mu} 
\left(\varepsilon b - a - c \right) 
+ \bar{x} - \varepsilon \bar{y} + \bar{z}, \nonumber \\
\frac{\mathrm{d} \bar{z}}{\mathrm{d} t}
& = \frac{\mu}{c} \bar{z}
\left[\frac{1}{\mu} \left(b - \frac{c}{5} \right)- 
\bar{y} + \frac{\bar{z}}{5}\right], 
\label{eq:R_example_2} 
\end{align} 
which is a quadratic $(9,2)$ \emph{CDS} 
provided that, for every sufficiently small $\varepsilon > 0$,
we choose any $a, b, c > 0$ such that 
$b \ge (a + c)/\varepsilon$.
Let us note that one can also use the splitting
$f_3(y,z) = l_3(z) + r_3(y)$, and perturb $l_3(z) = z/5$
as per \emph{Lemma}~$\ref{lemma:DS_linear_1}$,
and $r_3(y) = -y$ via \emph{Theorem}~$\ref{theorem:universal}$;
this leads to the same quadratics in this particular example, 
because the third equation from~$(\ref{eq:Rossler_reflected_pre})$
contains no positive terms other than $z/5$.
\end{example}

\subsection{Quadratic DSs}
\label{sec:quadratic}
Consider DS~(\ref{eq:DS}) with $n = 2$, which can be written as
\begin{align}
\frac{\mathrm{d} x_i}{\mathrm{d} t}
= f_i(\mathbf{x}; 2) 
& = \alpha_{i,0} + \sum_{j = 1}^N \alpha_{i,j} x_j
+ \sum_{j = 1} \beta_{i,j} x_j^2
+ \sum_{\substack{j = 1, \\ j \ne i}}^N \gamma_{i,j} x_i x_j 
+ \sum_{\substack{(k,l) \in \pi(S_i), \\ k \ne l}} 
\delta_{i,k,l} x_k x_l,
\; \; \; i = 1,2,\ldots,N,
\label{eq:DS_quadratic}
\end{align}
where $\pi(S_i)$ denotes 
all the combinations of pairs from the set 
$S_i = \{1,2,\ldots,i-1,i+1,\ldots,N\}$.
In what follows, we assume that~(\ref{eq:DS}) with $n = 2$
has at least one non-zero quadratic term.
Let us consider a special case,
when the $i$th equation takes the form
\begin{align}
\frac{\mathrm{d} x_i}{\mathrm{d} t}
= q_i(\mathbf{x}) 
& = \beta_{i,i} x_i^2 
+ \sum_{j \in J_i} \beta_{i,j} x_j^2
+ \sum_{j \in J_i} \gamma_{i,j} x_i x_j 
+ \sum_{\substack{(k,l) \in \pi(J_i), \\ k \ne l}} \delta_{i,k,l} x_k x_l,
\label{eq:i_quadratic}
\end{align}
where $\beta_{i,i} \ge 0$, 
the set $J_i$ contains only indices
$j,k,l \ne i$ such that 
$ \beta_{i,j}, \delta_{i,k,l} \ge 0$
and $\gamma_{i,j} \le 0$, and $\pi(J_i)$ denotes 
all the combinations of pairs from $J_i$.

\begin{lemma}$($\textbf{\emph{Chemicality for~$(\ref{eq:i_quadratic})$}}$)$ 
\label{lemma:DS_quadratic}
Assume that parameters $\mathbf{a} \in \mathbb{R}_{>}^N$ 
can be chosen to satisfy
\begin{align}
\beta_{i,i} a_i & \ge \sum_{j \in J_i} (- \gamma_{i,j}) a_j, \nonumber \\
(- \gamma_{i,j}) a_i & \ge 2 \beta_{i,j} a_j
+ \sum_{l \in J_i, l \ne j} \delta_{i,j,l} a_l
\; \; \; \textrm{for all } j \in J_i.
\label{eq:quadratic_constraint}
\end{align}
Then, there exists $\mu_0 > 0$ such that
for every $\mu \in (0,\mu_0)$
equation~$(\ref{eq:i_quadratic})$
becomes chemical under the translation
$\mathbf{x} \to (\mathbf{x} - \mathbf{a}/\mu)$.
\end{lemma}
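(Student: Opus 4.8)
The plan is to apply the translation $\bar{\mathbf{x}} = \mathbf{x} + \mathbf{a}/\mu$ directly to $q_i(\mathbf{x})$, expand the result as a polynomial in $\bar{\mathbf{x}}$, and verify that every resulting monomial is chemical in the sense of Definition~\ref{def:CDS}. The organizing observation is that any monomial of the translated right-hand side which retains at least one factor of $\bar{x}_i$ vanishes when $\bar{x}_i = 0$, and is therefore automatically chemical irrespective of the sign of its coefficient. Hence only the monomials free of $\bar{x}_i$ require inspection: the degree-two terms $\bar{x}_j^2$ and $\bar{x}_k \bar{x}_l$ (with $j,k,l \in J_i$), the degree-one terms $\bar{x}_j$ (with $j \in J_i$), and the constant term. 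I would check the nonnegativity of their coefficients in this order.

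For the degree-two monomials without $\bar{x}_i$, the coefficients are simply $\beta_{i,j}$ and $\delta_{i,k,l}$, which are nonnegative by the defining hypotheses on $J_i$ in~(\ref{eq:i_quadratic}); these monomials are thus chemical with no further work. Next I would collect, for each $j \in J_i$, every contribution to the coefficient of $\bar{x}_j$: the cross term $-2\beta_{i,j}(a_j/\mu)$ from $\beta_{i,j} x_j^2$, the term $-\gamma_{i,j}(a_i/\mu)$ from $\gamma_{i,j} x_i x_j$, and the terms $-\delta_{i,j,l}(a_l/\mu)$ from each pair containing $j$. The total is $\mu^{-1}\big[(-\gamma_{i,j}) a_i - 2\beta_{i,j} a_j - \sum_{l \in J_i,\, l\ne j} \delta_{i,j,l} a_l\big]$, which is exactly $\mu^{-1}$ times the difference of the two sides of the second line of~(\ref{eq:quadratic_constraint}); that hypothesis therefore makes each such coefficient nonnegative, so every $\bar{x}_j$ monomial is chemical.

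The main obstacle is the constant term, whose coefficient is $\mu^{-2} D$ with $D = \beta_{i,i} a_i^2 + \sum_{j \in J_i} \beta_{i,j} a_j^2 + \sum_{j \in J_i} \gamma_{i,j} a_i a_j + \sum_{(k,l) \in \pi(J_i)} \delta_{i,k,l} a_k a_l$; its sign is not manifest, since the $\gamma_{i,j}$ are nonpositive yet enter with the positive products $a_i a_j$. Here I would invoke the first line of~(\ref{eq:quadratic_constraint}): multiplying $\beta_{i,i} a_i \ge \sum_{j \in J_i}(-\gamma_{i,j}) a_j$ by $a_i > 0$ yields $\beta_{i,i} a_i^2 + \sum_{j \in J_i} \gamma_{i,j} a_i a_j \ge 0$, while the remaining terms of $D$ are nonnegative since $\beta_{i,j}, \delta_{i,k,l} \ge 0$ and $\mathbf{a} \in \mathbb{R}_{>}^N$; hence $D \ge 0$ and the constant term is nonnegative. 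Since every monomial free of $\bar{x}_i$ then has a nonnegative coefficient, and every monomial containing $\bar{x}_i$ is chemical automatically, the translated equation is chemical by Definition~\ref{def:CDS}. As none of these sign conditions depends on the magnitude of $\mu$, the conclusion in fact holds for every $\mu > 0$, and in particular for every $\mu$ below any chosen $\mu_0$.
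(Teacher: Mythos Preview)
Your proof is correct and follows essentially the same route as the paper: translate, expand, and check the sign of each coefficient against the hypotheses~(\ref{eq:quadratic_constraint}), noting that monomials containing $\bar{x}_i$ are automatically chemical. Your exposition is in fact more explicit than the paper's (which simply displays the expanded equation and invokes the constraints), and your closing remark that chemicality holds for \emph{every} $\mu>0$, not merely sufficiently small $\mu$, is a valid sharpening of the stated conclusion.
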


\begin{proof}
Translating via $\bar{\mathbf{x}} = \mathbf{x} + \mathbf{a}/\mu$
in~(\ref{eq:i_quadratic}), one obtains equation
 \begin{align}
\frac{\mathrm{d} \bar{x}_i}{\mathrm{d} t}
& =  \frac{1}{\mu^2} \left[\sum_{j \in J_i} \beta_{i,j} a_j^2
+ \sum_{\substack{(k,l) \in \pi(J_i), \\ k \ne l}}
\delta_{i,k,l} a_k a_l
+ a_i \left(\beta_{i,i}
a_i +  \sum_{j \in J_i} 
\gamma_{i,j} a_j \right)
\right]
- \frac{1}{\mu} \left(2 \beta_{i,i} a_i
+ \sum_{j \in J_i} \gamma_{i,j} a_j \right) \bar{x}_i \nonumber \\
& - \frac{1}{\mu} \sum_{j \in J_i} \left(
2 \beta_{i,j} a_j 
+ \gamma_{i,j} a_i
+ \sum_{l \in J_i, l \ne j} \delta_{i,j,l} a_l  \right) \bar{x}_j 
+ q_i(\bar{\mathbf{x}}),
\label{eq:1_perturbation_Q}
\end{align}
which, using~(\ref{eq:quadratic_constraint}), 
is chemical for every sufficiently small $\mu > 0$ 
by Definition~\ref{def:CDS}. 
\end{proof}

\noindent
\textbf{Remark}. If 
the coefficients $\beta_{i,i}, \beta_{i,j}, \delta_{i,j,k} > 0$
and $\gamma_{i,j} < 0$, then, for every
choice of $a_j > 0$ with $j \ne i$, 
conditions~(\ref{eq:quadratic_constraint})
are satisfied by taking any $a_i > 0$ sufficiently large. 
Therefore, conditions~(\ref{eq:quadratic_constraint}) 
can also be satisfied 
if one or more of these coefficients
are zero, provided that~(\ref{eq:i_quadratic})
is then perturbed by (some of) the underlying missing quadratics,
which we exploit in Theorem~\ref{theorem:DS_quadratic} below.

\noindent
\textbf{Remark}. Conditions~(\ref{eq:quadratic_constraint})
are sufficient for chemicality. More generally, one can 
replace the first inequality, which is linear, 
with the quadratic inequality obtained by demanding 
that the zero-degree monomial in~(\ref{eq:1_perturbation_Q})
is non-negative.

\noindent
\textbf{Remark}. In this section, we focus on 
reducing the number of cubics. More broadly,
to simultaneously decrease the number 
of linear and quadratic terms as well, 
one can redefine $q_i(\mathbf{x})$ by including in it
constant or suitable linear monomials.
Conditions~(\ref{eq:quadratic_constraint}), 
possibly with strict inequalities,
then remain sufficient for chemicality.
See Section~\ref{sec:two_quadratics} for some examples.

DS~(\ref{eq:DS_quadratic}) can be mapped under the universal 
QCM from Theorem~\ref{theorem:universal}
to a cubic CDS with $M_2$ cubics;
the same is also true for every DS obtained from~(\ref{eq:DS_quadratic})
via permutations or reflections of the dependent variables.
We now construct a QCM based on Lemma~\ref{lemma:DS_quadratic}
that is more efficient.

\begin{lemma} 
\label{lemma:1}
Every $N$-dimensional quadratic \emph{DS}
can be mapped via permutations and reflections 
to \emph{DS}~$(\ref{eq:DS_quadratic})$ 
whose first equation contains monomial
$m(\mathbf{x}) \in \{|\beta_{1,1}| x_1^2, - |\gamma_{1,2} |x_1 x_2, 
|\beta_{1,2}| x_2^2, |\delta_{1,2,3}| x_2 x_3\}$.
\end{lemma}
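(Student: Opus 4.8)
The plan is to use a single permutation to expose a quadratic monomial in the first equation, classify that monomial by its structural type, and then use reflections purely to fix its sign. By the standing assumption that the quadratic DS has a nonzero quadratic term, at least one equation carries a quadratic monomial; permuting the dependent variables (which keeps the system a quadratic DS of the form~(\ref{eq:DS_quadratic})) I bring that equation into the first slot, so that $f_1(\mathbf{x})$ contains a quadratic monomial $c\,m$ with $c \ne 0$. Exactly four structural types are possible: a square $x_1^2$ of the diagonal variable, a cross term $x_1 x_j$ $(j \ne 1)$ involving the diagonal variable, a square $x_j^2$ $(j \ne 1)$ of a non-diagonal variable, or a cross term $x_j x_l$ $(j,l \ne 1, \, j \ne l)$ of two non-diagonal variables. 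A further permutation among the non-diagonal indices places $j$ at position $2$ (and, in the last case, $l$ at position $3$), matching the monomial to $x_1^2$, $x_1 x_2$, $x_2^2$, or $x_2 x_3$, respectively.

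It then remains to fix the sign of $c$ so that the coefficient becomes $+|\beta_{1,1}|$, $-|\gamma_{1,2}|$, $+|\beta_{1,2}|$, or $+|\delta_{1,2,3}|$ as required, using reflections $x_k \to -x_k$. The crucial bookkeeping is how a reflection acts on the first equation. Reflecting a non-diagonal variable $x_k$ $(k \ne 1)$ merely substitutes $x_k \to -x_k$ into $f_1$, hence flips the sign of every term linear in $x_k$ — in particular of $x_1 x_k$ and of any $x_k x_l$ — while leaving the square $x_k^2$ invariant. Reflecting the diagonal variable $x_1$, by contrast, introduces $\bar{x}_1 = -x_1$ and therefore negates the entire first equation, $\mathrm{d}\bar{x}_1/\mathrm{d}t = -f_1(-\bar{x}_1,\ldots)$; this flips the sign of \emph{every} square term $x_1^2$ and $x_j^2$ in $f_1$, while preserving the sign of the cross terms $x_1 x_j$.

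With this dictionary the four cases are immediate. For $x_1 x_2$ I want a negative coefficient, so if $c > 0$ I reflect $x_2$; for $x_2 x_3$ I want a positive coefficient, so if $c < 0$ I reflect $x_2$; in both cases the relevant cross term flips sign. For the squares $x_1^2$ and $x_2^2$ I want a positive coefficient, and here a reflection of a non-diagonal variable is useless — it leaves squares invariant — so if $c < 0$ I instead reflect the diagonal variable $x_1$, whose equation-negating effect turns $c\,x_1^2$ or $c\,x_2^2$ into its positive counterpart. This square-sign issue is the one genuine obstacle, and its resolution is precisely the observation that reflecting the diagonal variable negates the whole first equation rather than merely substituting into it. Since permutations and reflections are invertible linear maps preserving polynomial degree, the transformed system is again a quadratic DS writable in the form~(\ref{eq:DS_quadratic}), now with its first equation containing a monomial of the required type, which completes the argument.
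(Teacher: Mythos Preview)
Your proof is correct and follows essentially the same approach as the paper: permute to bring a quadratic monomial into the first equation with normalized indices, then apply the reflection $x_2 \to -x_2$ (or $x_3 \to -x_3$) to fix the sign of cross terms and the reflection $x_1 \to -x_1$ to fix the sign of square terms. Your explicit ``dictionary'' explaining that reflecting the diagonal variable negates the whole first equation (hence flips squares) while reflecting a non-diagonal variable only substitutes (hence flips cross terms but not squares) makes transparent precisely the point the paper leaves implicit.
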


\begin{proof}
Every $N$-dimensional quadratic DS can be mapped
via a suitable permutation of the dependent variables
to DS~(\ref{eq:DS_quadratic}) with monomial 
$m(\mathbf{x}) \in \{\beta_{1,1} x_1^2, 
-\gamma_{1,2} x_1 x_2, \beta_{1,2} x_2^2, \delta_{1,2,3} x_2 x_3\}$
in the first equation. 
If $\beta_{1,1}, \beta_{1,2},\gamma_{1,2}, \delta_{1,2,3} > 0$, 
then the lemma follows.
Otherwise, proceed as follows:
if $m(\mathbf{x}) \in 
\{-|\beta_{1,1}| x_1^2,- |\beta_{1,2}| x_2^2\}$, then
apply the reflection $x_1 \to -x_1$ to obtain 
a DS with $m(\mathbf{x}) \in \{|\beta_{1,1}| x_1^2,|\beta_{1,2}| x_2^2\}$.
Similarly, if $m(\mathbf{x}) = |\gamma_{1,2}| x_1 x_2$
(respectively, $m(\mathbf{x}) = -|\delta_{1,2,3}| x_2 x_3$), then apply
$x_2 \to -x_2$ (respectively, $x_2 \to -x_2$ or $x_3 \to -x_3$)
to obtain $m(\mathbf{x}) = - |\gamma_{1,2}| x_1 x_2$
(respectively, $m(\mathbf{x}) = |\delta_{1,2,3}| x_2 x_3$).
\end{proof}

\begin{theorem}
\label{theorem:DS_quadratic}
Every $N$-dimensional quadratic \emph{DS}~$(\ref{eq:DS_quadratic})$, 
with exactly $M_2$ quadratic monomials,
can be mapped via permutations, reflections and a \emph{QCM} 
to an $N$-dimensional cubic \emph{CDS}
with $M_3 \le (M_2 - 1)$ cubic monomials.
\end{theorem}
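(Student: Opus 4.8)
The plan is to combine the three preceding tools so as to spend the universal QCM on all but one quadratic monomial. First I would invoke Lemma~\ref{lemma:1} to permute and reflect the dependent variables until the first equation of~(\ref{eq:DS_quadratic}) contains a monomial $m(\mathbf{x})$ lying in the set $\{|\beta_{1,1}| x_1^2, -|\gamma_{1,2}| x_1 x_2, |\beta_{1,2}| x_2^2, |\delta_{1,2,3}| x_2 x_3\}$; these are exactly the sign patterns admissible in the special form $q_i(\mathbf{x})$ of~(\ref{eq:i_quadratic}). The key idea is that a monomial of this type can be absorbed into a function $q_1(\mathbf{x})$ that is rendered chemical by translation alone, via Lemma~\ref{lemma:DS_quadratic}, and therefore contributes no cubic; every remaining quadratic monomial will instead be handled by the universal QCM of Theorem~\ref{theorem:universal}, each contributing at most one cubic. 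Since the universal QCM accepts an arbitrary positive translation scale $\mathbf{a}$, while Lemma~\ref{lemma:DS_quadratic} constrains $\mathbf{a}$ only through~(\ref{eq:quadratic_constraint}), a single common translation vector $\mathbf{a}/\mu$ can serve all pieces, and the splitting Lemma~\ref{lemma:splitting} then glues them into one chemical system.

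Concretely, I would split the first equation as $f_1(\mathbf{x}) = m(\mathbf{x}) + r_1(\mathbf{x})$, where $r_1$ collects all remaining monomials of $f_1$. I complete $m(\mathbf{x})$ into a full special form $q_1(\mathbf{x})$ by adding a small quadratic perturbation $p_1^{q}(\mathbf{x};\varepsilon)$ consisting only of the underlying missing quadratics (as licensed by the Remark following Lemma~\ref{lemma:DS_quadratic}), and I perturb the remainder by the universal-QCM term $\tfrac{\mu}{a_1} x_1 r_1(\mathbf{x})$. For the equations $i \ge 2$ I apply the universal QCM wholesale, with perturbations $\tfrac{\mu}{a_i} x_i f_i(\mathbf{x})$. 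Choosing $\mathbf{a}$ to satisfy~(\ref{eq:quadratic_constraint}) for $q_1$ — which by that Remark is possible by taking $a_1$ sufficiently large — and using the common translation $\mathbf{x} \to (\mathbf{x} - \mathbf{a}/\mu)$, Lemma~\ref{lemma:DS_quadratic} makes the $q_1$ block chemical, Theorem~\ref{theorem:universal} makes each remainder block chemical, and Lemma~\ref{lemma:splitting} makes the assembled CDS chemical for all sufficiently small $\mu$.

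The counting then gives the bound. The completing perturbation $p_1^{q}$ and the translation of the quadratic $q_1$ are themselves purely quadratic, so the block handled by Lemma~\ref{lemma:DS_quadratic} produces no cubic at all. Each quadratic monomial fed into the universal QCM produces at most one cubic (the factor $x_i$ times a quadratic), while linear and constant monomials produce none; since exactly one quadratic monomial, namely $m(\mathbf{x})$, has been diverted away from the universal QCM, the resulting cubic CDS has $M_3 \le M_2 - 1$ cubic monomials, the inequality allowing for possible coincidences among the generated cubics.

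The part requiring genuine care is verifying, for each of the four admissible types of $m(\mathbf{x})$, that the special form $q_1$ can indeed be completed using only quadratic perturbations while simultaneously satisfying~(\ref{eq:quadratic_constraint}) — so that no cubic sneaks in through the completion. For $m = |\beta_{1,1}| x_1^2$ one takes $q_1 = m$ with $J_1 = \varnothing$ and no perturbation; for the other three types one adjoins small terms $\varepsilon x_1^2$, $-\varepsilon x_1 x_2$ (and, for $m = |\delta_{1,2,3}| x_2 x_3$, also $-\varepsilon x_1 x_3$ together with small $x_2^2$ and $x_3^2$ terms), after which~(\ref{eq:quadratic_constraint}) reduces to demanding $a_1$ large relative to the other $a_j$. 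Confirming that this choice of $\mathbf{a}$ remains admissible for the universal QCM on the remaining blocks — which it does, since that map imposes no constraint on $\mathbf{a}$ beyond positivity — is the last point to check, and it is what makes the single common translation possible.
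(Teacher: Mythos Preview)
Your proposal is correct and follows essentially the same route as the paper: invoke Lemma~\ref{lemma:1} to normalise one quadratic monomial $m(\mathbf{x})$ in the first equation, complete it to a $q_1$ of the form~(\ref{eq:i_quadratic}) via a small quadratic perturbation, handle $q_1$ by Lemma~\ref{lemma:DS_quadratic}, feed every other monomial through the universal QCM, and glue with Lemma~\ref{lemma:splitting} under the common translation $\mathbf{a}/\mu$. The only cosmetic difference is in case $m=|\delta_{1,2,3}|x_2x_3$: the paper shows that $p(\mathbf{x};\varepsilon)=\varepsilon(x_1^2-x_1x_2-x_1x_3)$ alone suffices for~(\ref{eq:quadratic_constraint}) (with $\beta_{1,2}=\beta_{1,3}=0$ and $a_1\ge\max\{|\delta_{1,2,3}|a_2,|\delta_{1,2,3}|a_3\}/\varepsilon$), so your additional small $x_2^2,x_3^2$ terms are harmless but unnecessary; correspondingly the paper obtains $M_3=M_2-1$ exactly rather than an inequality.
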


\begin{proof}
By Lemma~\ref{lemma:1}, up to permutations and reflections, 
we assume without a loss of generality that 
DS~(\ref{eq:DS_quadratic}) has one of the four 
quadratic monomials $m(\mathbf{x})$
in the first equation. Defining remainders 
$r_1(\mathbf{x}) = f_1(\mathbf{x};2) - m(\mathbf{x})$,
and $r_i(\mathbf{x}) = f_i(\mathbf{x};2)$, $i = 2,3,\ldots,N$, 
let us split~(\ref{eq:DS_quadratic}) as follows:
\begin{align}
\frac{\mathrm{d} x_1}{\mathrm{d} t}
& = m(\mathbf{x}) + r_1(\mathbf{x}), 
\nonumber \\
\frac{\mathrm{d} x_i}{\mathrm{d} t}
& = r_i(\mathbf{x}), 
\; \; \; i = 2,3,\ldots,N.
\label{eq:DS_quadratic_splitting}
\end{align}
Consider the perturbed equations 
\begin{align}
\frac{\mathrm{d} x_1}{\mathrm{d} t}
& = m(\mathbf{x}) + r_1(\mathbf{x})
+ p(\mathbf{x}; \varepsilon) + \frac{\mu}{a_1} x_1 r_1(\mathbf{x}), 
\nonumber \\
\frac{\mathrm{d} x_i}{\mathrm{d} t}
& = r_i(\mathbf{x}) + \frac{\mu}{a_i} x_i r_i(\mathbf{x}),
\; \; \; i = 2,3,\ldots,N,
\label{eq:DS_quadratic_splitting_p}
\end{align}
where 
\begin{enumerate}
\item[(i)] if $m(\mathbf{x}) = |\beta_{1,1}| x_1^2$, 
then $p(\mathbf{x}; \varepsilon) \equiv 0$,
\item[(ii)] if $m(\mathbf{x}) = -|\gamma_{1,2}| x_1 x_2$, 
then $p(\mathbf{x}; \varepsilon) = \varepsilon x_1^2$
and we choose $a_1 \ge |\gamma_{1,2}| a_2/\varepsilon$,
\item[(iii)] if $m(\mathbf{x}) = |\beta_{1,2}| x_2^2$, 
then $p(\mathbf{x}; \varepsilon) = \varepsilon (x_1^2 - x_1 x_2)$
and we choose $a_1 \ge 2 |\beta_{1,2}| a_2/\varepsilon$,
\item[(iv)] if $m(\mathbf{x}) = |\delta_{1,2,3}| x_2 x_3$, 
then $p(\mathbf{x}; \varepsilon) = 
\varepsilon (x_1^2 - x_1 x_2 - x_1 x_3)$
and we choose $a_1 \ge \textrm{max} 
\{|\delta_{1,2,3}| a_2/\varepsilon,$ 
$|\delta_{1,2,3}| a_3/\varepsilon\}$.
\end{enumerate}
In each of these four cases, the function 
$q_1(\mathbf{x}) = m(\mathbf{x}) + p(\mathbf{x}; \varepsilon)$
is of the form given in~(\ref{eq:i_quadratic}), 
and the conditions~(\ref{eq:quadratic_constraint}) hold
for every sufficiently small $\varepsilon > 0$.
Therefore, for every sufficiently small $\varepsilon > 0$ 
and the stated choice of $\mathbf{a} \in \mathbb{R}_{>}^N$ 
there exists $\mu_0 > 0$ such that
for every $\mu \in (0,\mu_0)$
the perturbed equations~(\ref{eq:DS_quadratic_splitting_p})
become chemical under the translation $\mathbf{x}
\to \mathbf{x} - \mathbf{a}/\mu$ by Lemma~\ref{lemma:DS_quadratic}, 
Theorem~\ref{theorem:universal} and Lemma~\ref{lemma:splitting}.
By Definition~\ref{def:QCM}, the perturbations 
$p_1(\mathbf{x}; \varepsilon,\mu) 
= p(\mathbf{x}; \varepsilon) + \mu x_1 r_1(\mathbf{x})/a_1$, 
$p_i(\mathbf{x}; \mu) = \mu x_i r_i(\mathbf{x})/a_i$, 
$i = 2,3,\ldots,N$, 
and the translation vector $\mathbf{a}/\mu$,
then induce a QCM for DS~(\ref{eq:DS_quadratic}).
This QCM maps one of the $M_2$ quadratic monomials
into quadratics, and not a cubic; 
hence, the resulting CDS has 
exactly $M_3 = (M_2 - 1)$ cubics.
\end{proof}

\noindent
\textbf{Remark}. Depending on the precise 
structure of~(\ref{eq:DS_quadratic}),
further reduction of cubic terms may be possible 
via a more refined QCM: by including into $q_1(\mathbf{x})$ 
more quadratic terms from 
$r_1(\mathbf{x})$, and by applying such splittings
to the other equations as well. See
Section~\ref{sec:two_quadratics} for an example.

\begin{example}
Consider R\"ossler system~$(\ref{eq:Rossler})$.
Under the universal \emph{QCM} from 
\emph{Theorem}~$\ref{theorem:universal}$, 
$(\ref{eq:Rossler})$ maps to a cubic \emph{CDS}.
Let us now apply the results from this section
to construct a \emph{QCM} that maps 
R\"ossler system into a quadratic \emph{CDS}.
To this end, as per \emph{Lemma~\ref{lemma:1}}, 
we apply the reflection $y \to - y$, thus obtaining
\begin{align}
\frac{\mathrm{d} x}{\mathrm{d} t}
& = f_1(x,y) = \frac{1}{5} - \frac{57}{10} x - x y, \nonumber \\
\frac{\mathrm{d} y}{\mathrm{d} t}
& = f_2(x,z) = x + z, \nonumber \\
\frac{\mathrm{d} z}{\mathrm{d} t}
& = f_3(y,z) = - y + \frac{z}{5}.
\label{eq:Rossler_reflected}
\end{align}
Let us split this \emph{DS} as follows:
$f_1(x,y) = m(x,y)  + l_1(x)$
with $m(x,y) = - x y$,
$f_2(x,z) = l_2(z) + r_2(x)$
with $l_2(z) = z$, 
and $f_3(y,z) = r_3(y,z)$.
Perturbing then $m(x,y)$
according to \emph{Theorem~\ref{theorem:DS_quadratic}(ii)}, 
$l_1(x)$ and $l_2(z)$ according to \emph{Lemma~\ref{lemma:DS_linear_2}},
and $r_2(x)$ and $r_3(y,z)$ 
via \emph{Theorem~\ref{theorem:universal}}, 
one obtains
\begin{align}
\frac{\mathrm{d} x}{\mathrm{d} t}
& = \frac{1}{5} - \frac{57}{10} x - x y
+ \varepsilon x^2, \nonumber \\
\frac{\mathrm{d} y}{\mathrm{d} t}
& = x + z - \varepsilon y + \frac{\mu}{b} x y, \nonumber \\
\frac{\mathrm{d} z}{\mathrm{d} t}
& = - y + \frac{z}{5} + \frac{\mu}{c} 
z \left(-y + \frac{z}{5} \right).
\label{eq:Rossler_reflected_perturbed}
\end{align}
Under the translation
$(\bar{x},\bar{y},\bar{z}) = (x,y,z) + (a,b,c)/\mu$, 
the perturbed \emph{DS}~$(\ref{eq:Rossler_reflected_perturbed})$ 
becomes 
\begin{align}
\frac{\mathrm{d} \bar{x}}{\mathrm{d} t}
& = \left( \frac{a}{\mu^2} (\varepsilon a - b)
+ \frac{57 a}{10 \mu} + \frac{1}{5}  \right)
+ \left( \frac{1}{\mu} (b - 2 \varepsilon a) - \frac{57}{10} \right) \bar{x}
+ \frac{a}{\mu} \bar{y} + \varepsilon \bar{x}^2 - \bar{x} \bar{y}, 
\nonumber \\
\frac{\mathrm{d} \bar{y}}{\mathrm{d} t}
& = \frac{1}{\mu} \left(\varepsilon b - c \right)
- \left(\frac{a}{b} + \varepsilon \right) \bar{y} + \bar{z}
+ \frac{\mu}{b} \bar{x} \bar{y}, \nonumber \\
\frac{\mathrm{d} \bar{z}}{\mathrm{d} t}
& = \left(\frac{b}{c} - \frac{1}{5}  \right) \bar{z}
+ \frac{\mu}{5 c} \bar{z}^2 
- \frac{\mu}{c} \bar{y} \bar{z}.
\label{eq:Rossler_reflected_perturbed_translated}
\end{align}
For every $\varepsilon > 0$,
choosing any $a,b,c > 0$ such that 
$a \ge b/\varepsilon$ and $b \ge c/\varepsilon$,
it follows that, for every sufficiently small $\mu > 0$,
$(\ref{eq:Rossler_reflected_perturbed_translated})$
is a quadratic $(12,5)$ \emph{CDS}.
\end{example}

\section{Applications to chaos}
\label{sec:applications}
In this section, we apply the theory developed
in Section~\ref{sec:DS_to_CDS} to quadratic
DSs with up to two quadratic monomials
to study the capacity of chemical and biological systems
for chaos. In particular, prioritizing reduction in the number of 
the highest-degree monomials and reactions,
we construct three relatively simple CDSs 
displaying chaotic attractors with various properties.

\subsection{Chaotic DSs with one quadratic monomial}
\label{sec:one_quadratic}
By Theorem~\ref{theorem:DS_quadratic}, every quadratic DS,
with only one quadratic term, can be mapped to a
\emph{quadratic} CDS; in the context of chaos, we then 
have the following result.

\begin{theorem} 
\label{theorem:one_quadratic}
Assume that $N$-dimensional quadratic \emph{DS},
with only one quadratic monomial, 
has a robust bounded invariant set in 
$\mathbb{R}^N$ with robust properties 
$\mathcal{P}$ that are invariant under permutations, reflections 
and translations of the dependent variables. 
Then, there exists an $N$-dimensional quadratic \emph{CDS}
with a bounded invariant set in $\mathbb{R}_{>}^N$ with 
properties $\mathcal{P}$. 
\end{theorem}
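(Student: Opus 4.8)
The plan is to apply Theorem~\ref{theorem:DS_quadratic} with $M_2 = 1$ and then carry the invariant set, together with its properties $\mathcal{P}$, through each constituent operation of the resulting QCM. The decisive observation is that a quadratic DS with a \emph{single} quadratic monomial has $M_2 = 1$, so Theorem~\ref{theorem:DS_quadratic} produces a CDS with at most $M_3 \le M_2 - 1 = 0$ cubic monomials; the image is therefore a genuinely \emph{quadratic} CDS, which is exactly the degree claimed. What remains is to verify that the bounded invariant set and the properties $\mathcal{P}$ survive the QCM and that the set is relocated into the positive orthant $\mathbb{R}_{>}^N$. To this end I would decompose the QCM of Theorem~\ref{theorem:DS_quadratic} into its three ingredients: (a) the permutations and reflections of Lemma~\ref{lemma:1} bringing the DS into the form~(\ref{eq:DS_quadratic}); (b) the small perturbation $\mathbf{p}(\mathbf{x}; \varepsilon, \mu)$ read off from~(\ref{eq:DS_quadratic_splitting_p}); and (c) the translation $\mathbf{x} \to \mathbf{x} - \mathbf{a}/\mu$.

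For ingredient (a), since each permutation and each reflection $x_i \to -x_i$ acts as a linear isometry, it sends the bounded invariant set to a bounded invariant set and merely conjugates the admissible class of $C^1$ perturbations, so robustness is inherited; moreover $\mathcal{P}$ is preserved by the standing hypothesis that it is invariant under permutations and reflections. For ingredient (b), I would fix the compact neighbourhood $\mathbb{U} \supset \mathbb{V}$ furnished by Definition~\ref{def:robustness} and check that $\mathbf{p}(\mathbf{x}; \varepsilon, \mu)$ is arbitrarily $C^1$-small on $\mathbb{U}$. Each summand of $p_1 = p(\mathbf{x}; \varepsilon) + \mu x_1 r_1(\mathbf{x})/a_1$ and of $p_i = \mu x_i r_i(\mathbf{x})/a_i$ is a fixed polynomial scaled by either $\varepsilon$ or $\mu/a_i$, so on the fixed compact set $\mathbb{U}$ both $\|\mathbf{p}\|$ and $\|\nabla\mathbf{p}\|$ are $O(\varepsilon) + O(\mu)$. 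Choosing $\varepsilon$ small and then $\mu$ small meets the hypothesis of Definition~\ref{def:robustness}, which then yields a bounded invariant set $\mathbb{V}' \subset \mathbb{U}$ of the perturbed DS carrying the same properties $\mathcal{P}$.

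For ingredient (c), the translation is an isometry sending $\mathbb{V}'$ to $\mathbb{V}' + \mathbf{a}/\mu$, preserving boundedness and, by the translation-invariance hypothesis, preserving $\mathcal{P}$. Because $\mathbb{V}' \subset \mathbb{U}$ with $\mathbb{U}$ fixed and compact, its coordinates admit a uniform lower bound; taking $\mu$ small makes $\mathbf{a}/\mu$ exceed that bound componentwise, so $\mathbb{V}' + \mathbf{a}/\mu \subset \mathbb{R}_{>}^N$. The composite of (a)--(c) is precisely the QCM of Theorem~\ref{theorem:DS_quadratic}, and since $M_3 = 0$ the resulting CDS is quadratic with the desired bounded invariant set and properties $\mathcal{P}$ in the positive orthant, completing the argument.

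The main obstacle I anticipate is the ordering of the coupled parameters $(\varepsilon, \mathbf{a}, \mu)$: chemicality via Lemma~\ref{lemma:DS_quadratic} forces $a_1$ to be large relative to $\varepsilon$ (the conditions~(\ref{eq:quadratic_constraint}) enter through the choices of $a_1$ in cases (ii)--(iv) of Theorem~\ref{theorem:DS_quadratic}), while $C^1$-smallness for robustness and largeness for the translation both constrain $\mu$. The care required is to fix $\varepsilon$ first, then select $\mathbf{a}$ meeting the constraints, and only afterwards shrink $\mu$, checking that this single ordering simultaneously secures chemicality, a genuinely $C^1$-small perturbation on the \emph{fixed} set $\mathbb{U}$ (so that robustness applies), and a translation large enough to reach $\mathbb{R}_{>}^N$.
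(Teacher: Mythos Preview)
Your proposal is correct and follows essentially the same route as the paper: invoke Theorem~\ref{theorem:DS_quadratic} with $M_2 = 1$ to obtain $M_3 \le 0$ (hence a quadratic CDS), and then observe that permutations, reflections, small smooth perturbations, and translations all preserve the robust invariant set and the properties $\mathcal{P}$. The paper's proof is simply a terse two-sentence version of your argument, appealing directly to Definition~\ref{def:QCM} and Definition~\ref{def:robustness} rather than unpacking ingredients (a)--(c) and the parameter ordering as you do; your explicit verification of the $C^1$-smallness and the $(\varepsilon,\mathbf{a},\mu)$ ordering is a welcome elaboration but not a different strategy.
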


\begin{proof}
By Theorem~\ref{theorem:DS_quadratic}, every 
$N$-dimensional quadratic DS,
with only one quadratic monomial, can be mapped
to an $N$-dimensional quadratic CDS
via permutations, reflections and a QCM,
the latter of which consists purely of 
smooth perturbations of the vector field
and translations of the dependent variables
by Definition~\ref{def:QCM}.
By Definition~\ref{def:robustness},
robust invariant sets and robust properties
are preserved under such perturbations,
implying the statement of the theorem.
\end{proof}

Theorem~\ref{theorem:one_quadratic} implies that,
assuming robust chaos, the R\"ossler system~(\ref{eq:Rossler}),
and the Sprott systems F--S~\cite{Sprott}[Table 1],
can all be mapped to quadratic CDSs with chaos.

\begin{figure}[!htbp]
\vskip -2.0cm
\leftline{
\hskip 0.0cm (a) Original DS~(\ref{eq:Rossler_reflected})
\hskip  2.4cm (d) Perturbed DS~(\ref{eq:Rossler_reflected_perturbed}) 
\hskip  2.5cm (g) CDS~(\ref{eq:CDS_Rossler_p})}
\vskip 0.0cm
\centerline{
\hskip 0.0cm
\includegraphics[width=0.35\columnwidth]{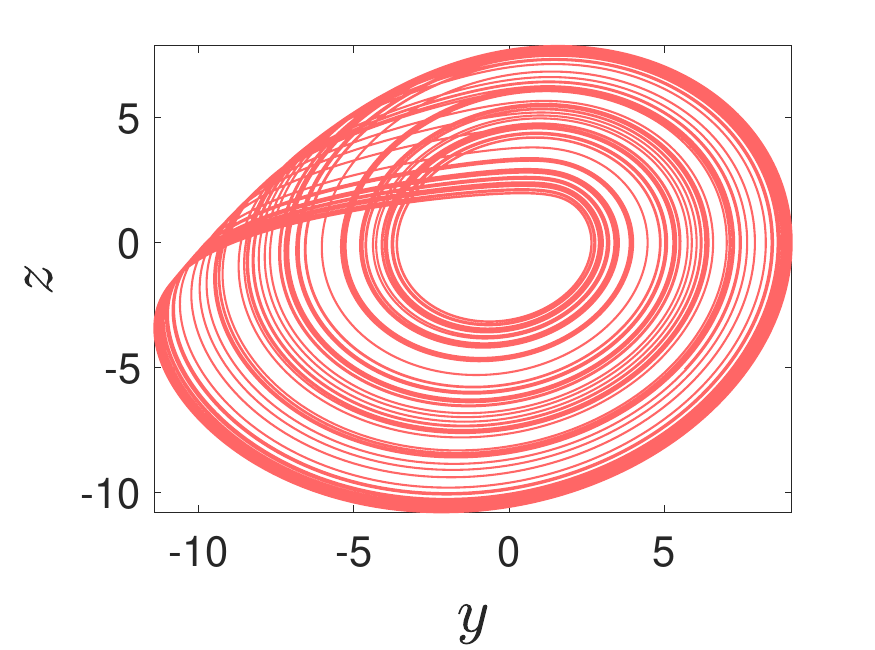}
\hskip 0.2cm
\includegraphics[width=0.35\columnwidth]{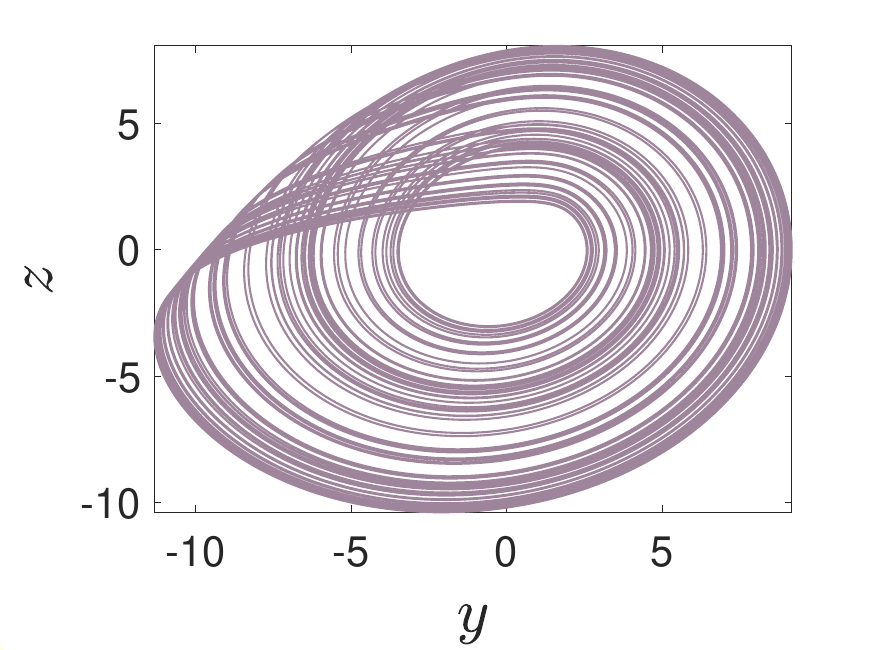}
\hskip 0.2cm
\includegraphics[width=0.35\columnwidth]{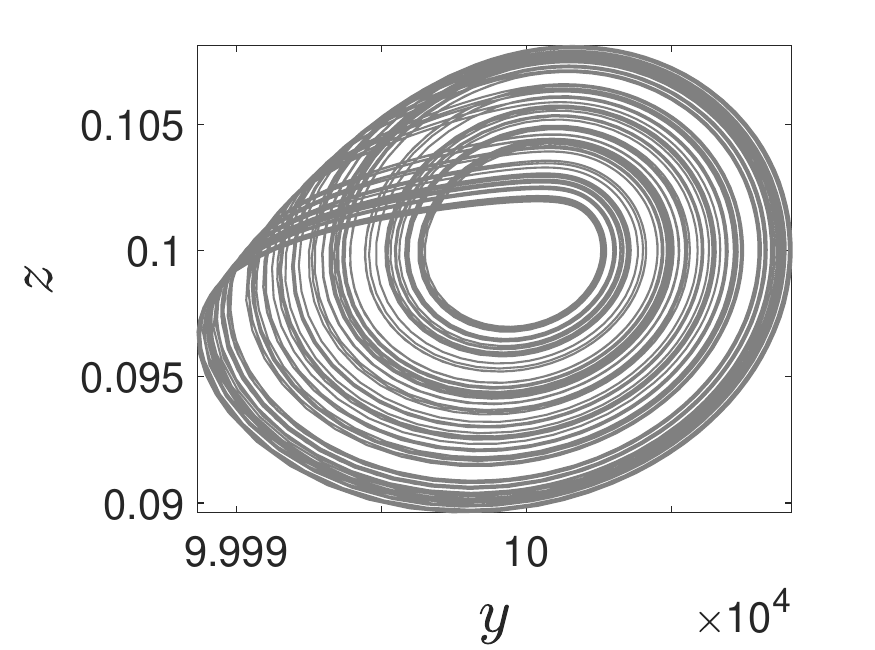}
}
\vskip 0.0cm
\leftline{
\hskip 0.0cm (b) 
\hskip  5.5cm (e)
\hskip  5.5cm (h)}
\centerline{
\hskip 0.0cm
\includegraphics[width=0.35\columnwidth]{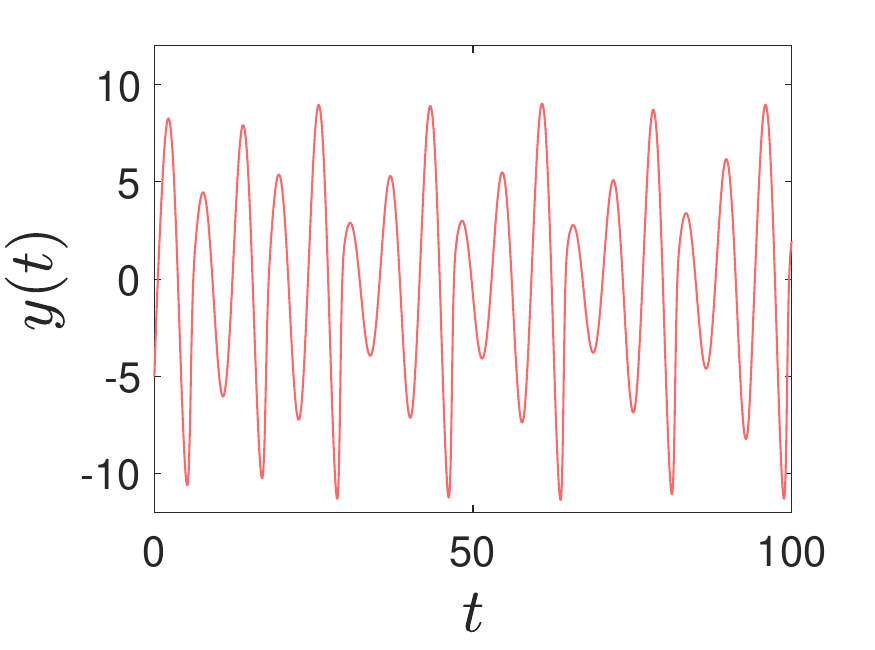}
\hskip 0.2cm
\includegraphics[width=0.35\columnwidth]{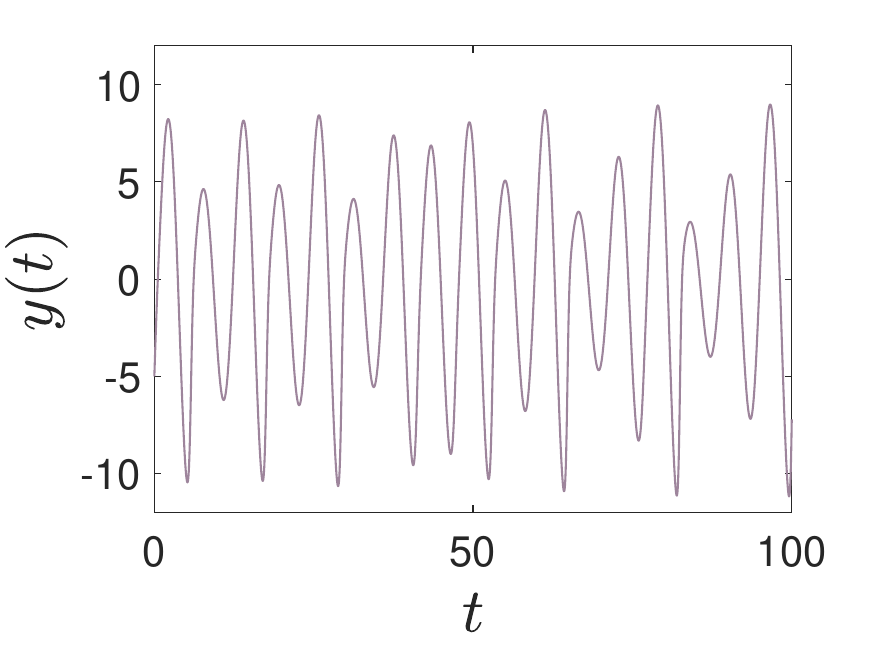}
\hskip 0.2cm
\includegraphics[width=0.35\columnwidth]{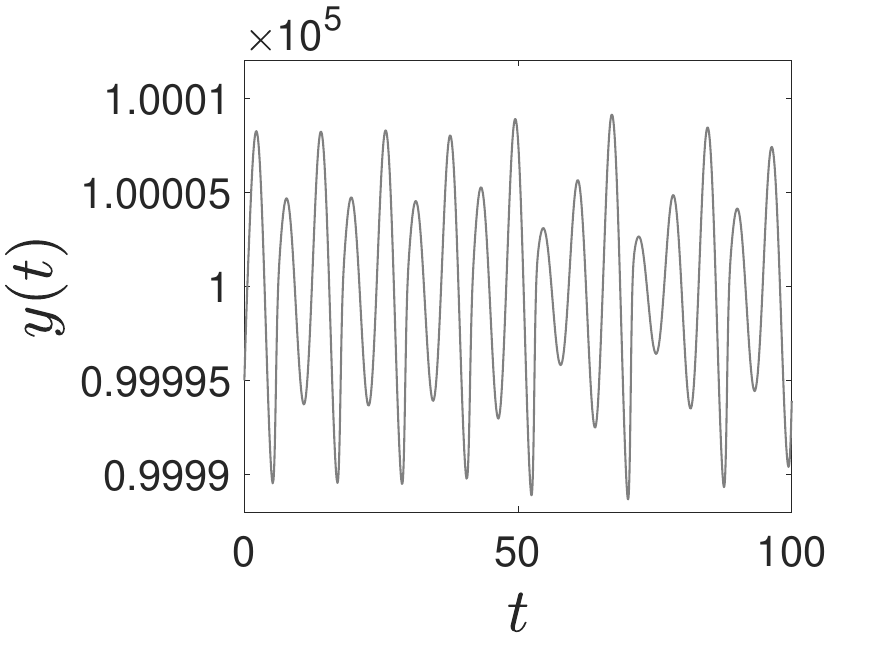}
}
\vskip 0.0cm
\leftline{
\hskip 0.0cm (c) 
\hskip  5.5cm (f)
\hskip  5.5cm (i)}
\centerline{
\hskip 0.0cm
\includegraphics[width=0.35\columnwidth]{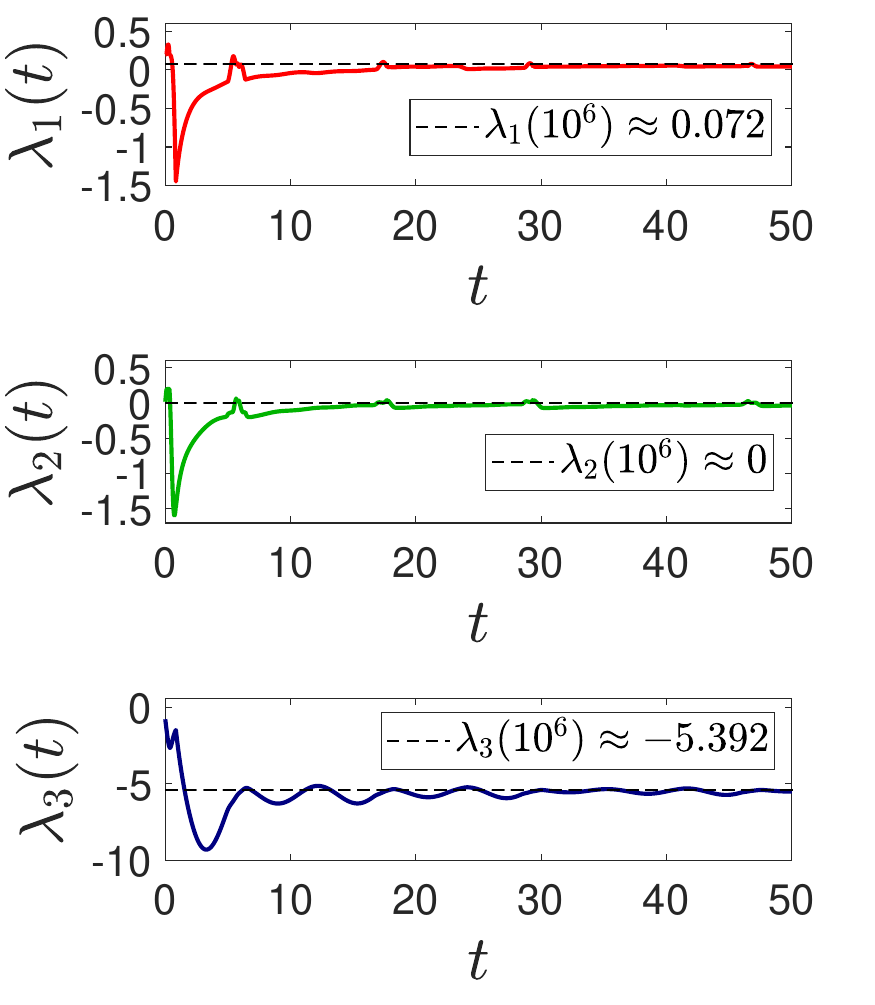}
\hskip 0.2cm
\includegraphics[width=0.35\columnwidth]{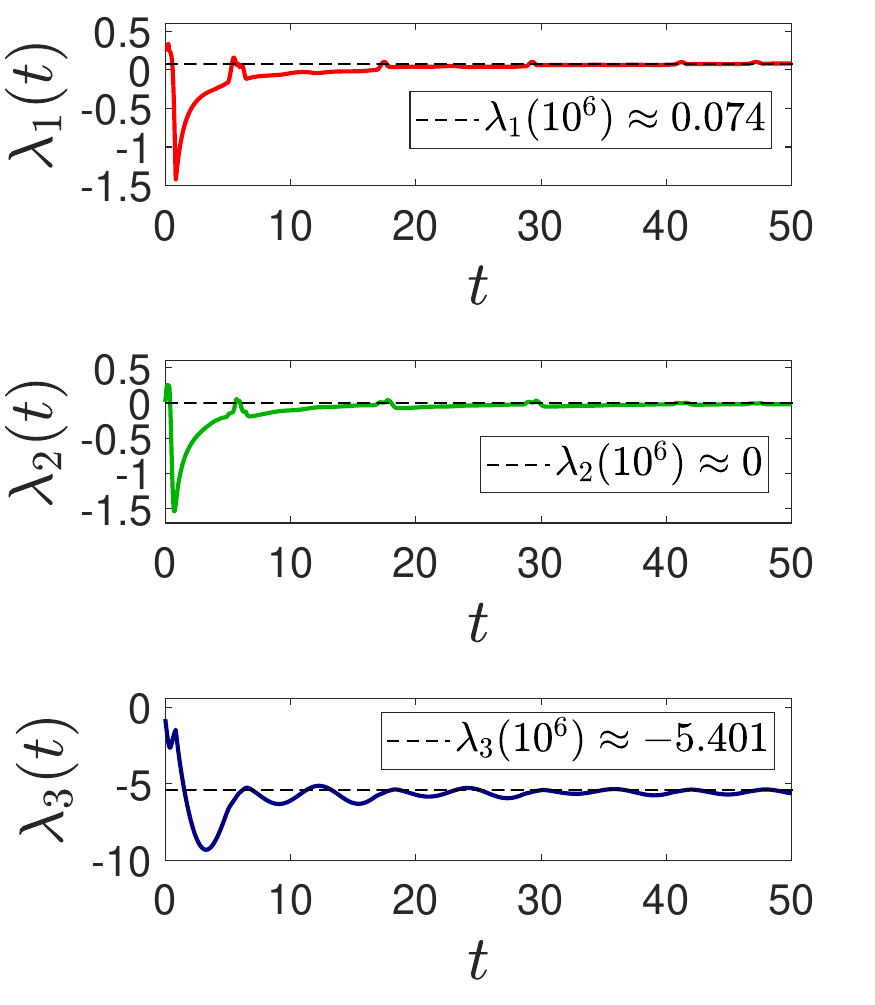}
\hskip 0.2cm
\includegraphics[width=0.35\columnwidth]{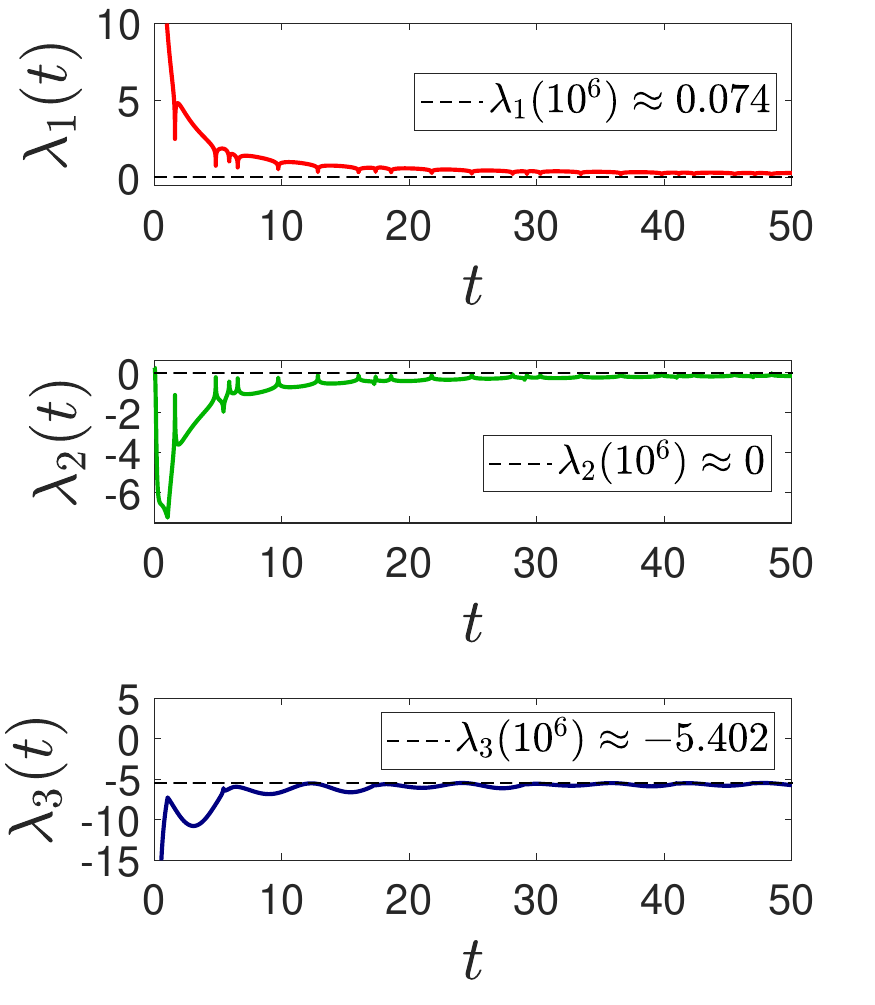}
}
\vskip -0.3cm
\caption{\it{\emph{Quadratic $(11,5)$ chemical R\"ossler system.} 
Panels~\emph{(a)} and~\emph{(b)} display 
respectively the $(y,z)$-
and $(t,y)$-space for the reflected 
R\"ossler system~$(\ref{eq:Rossler_reflected})$
with initial condition $(x_0,y_0,z_0) = (5,-5,5)$; 
panel~\emph{(c)} shows time-evolution of the 
three \emph{LCE}s for this solution, along with the values
at $t = 10^6$ as dashed black lines. 
Analogous plots are shown in panels~\emph{(d)}--\emph{(f)} 
for the perturbed 
R\"ossler system~$(\ref{eq:Rossler_reflected_perturbed})$,
and in~\emph{(g)}--\emph{(i)} 
for its translated and rescaled version~$(\ref{eq:CDS_Rossler})$
with the translated and rescaled initial condition
$x_0 = \varepsilon \mu (5 + (\varepsilon^2 \mu)^{-1})$,
$y_0 = -5 + (\varepsilon \mu)^{-1}$,
$z_0 = 5 \varepsilon (5 - \varepsilon)^{-1} (5 + \mu^{-1})$. 
The parameters are fixed to
$(a,b,c) = (\varepsilon^{-2},\varepsilon^{-1},1)$ and
$(\varepsilon,\mu) = (10^{-3}, 10^{-2})$.}} 
\label{fig:CDS_R}
\end{figure}

\begin{theorem}$($\textbf{\emph{Chemical R\"ossler system}}$)$ 
\label{theorem:Rossler}
Consider three-dimensional quadratic $(11,5)$ \emph{CDS} 
\begin{align}
\frac{\mathrm{d} x}{\mathrm{d} t}
& = \alpha_1 - \alpha_2 x + \alpha_3 y 
+ \alpha_4 x^2 - \alpha_5 x y, 
\nonumber \\
\frac{\mathrm{d} y}{\mathrm{d} t}
& = - \alpha_6 y + \alpha_7 z + \alpha_5 x y, \nonumber \\
\frac{\mathrm{d} z}{\mathrm{d} t}
& = \alpha_7 z + \alpha_8 z^2 - \alpha_9 y z.
\label{eq:CDS_Rossler}
\end{align}
 with a $(9,4)$ \emph{CRN} 
\begin{align}
\varnothing & \xrightarrow[]{\alpha_1} X, \; \; \; 
X \xrightarrow[]{\alpha_2} \varnothing,  \; \; \; 
Y \xrightarrow[]{\alpha_3} X + Y, \; \; \; 
2 X \xrightarrow[]{\alpha_4} 3 X, \; \; \; 
X + Y \xrightarrow[]{\alpha_5} 2 Y,  \nonumber \\
Y & \xrightarrow[]{\alpha_6} \varnothing, \; \; \; 
Z \xrightarrow[]{\alpha_7} Y + 2 Z, \; \; \; 
2 Z \xrightarrow[]{\alpha_8} 3 Z, \; \; \; 
Y + Z \xrightarrow[]{\alpha_9} Y,
\label{eq:CRN_Rossler}
\end{align}
and parameters
\begin{align}
\alpha_1 & = \frac{57}{10 \varepsilon} + \frac{\varepsilon \mu}{5}, 
\; \; 
\alpha_2 =  \frac{1}{\varepsilon \mu} + \frac{57}{10} , \; \; 
\alpha_3 = \frac{1}{\varepsilon}, \; \; 
\alpha_4 =  \frac{1}{\mu}, \; \;
\alpha_5 = 1,\nonumber \\
\alpha_6 & = \frac{1}{\varepsilon} + \varepsilon, \; \; 
\alpha_7 = \frac{1}{\varepsilon} - \frac{1}{5} , \; \; 
\alpha_8 = \frac{\mu}{5 \varepsilon} - \frac{\mu}{25}, \; \; 
\alpha_9 = \mu.
\label{eq:parameters_Rossler}
\end{align}
Assume that \emph{DS}~$(\ref{eq:Rossler})$
has a robust chaotic attractor in $\mathbb{R}^3$. 
Then, \emph{CDS}~$(\ref{eq:CDS_Rossler})$ has a 
chaotic attractor in $\mathbb{R}_{>}^3$
for every sufficiently small $\varepsilon > 0$ and $\mu > 0$. 
\end{theorem}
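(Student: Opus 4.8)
The plan is to realize CDS~(\ref{eq:CDS_Rossler}) as the image of the R\"ossler system~(\ref{eq:Rossler}) under the explicit QCM constructed in Section~\ref{sec:quadratic} (the one producing~(\ref{eq:Rossler_reflected_perturbed_translated})), followed by a positive rescaling, and then to invoke robustness to carry the chaotic attractor through the single perturbative step. This is conceptually an instance of Theorem~\ref{theorem:one_quadratic}, since~(\ref{eq:Rossler}) has a single quadratic monomial and the defining properties of a chaotic attractor are invariant under reflections, translations and rescalings; the content of the present theorem is the concrete realization. Concretely, I would first apply the reflection $y \to -y$ to pass from~(\ref{eq:Rossler}) to~(\ref{eq:Rossler_reflected}); being an invertible affine map, this sends the assumed robust chaotic attractor to a robust chaotic attractor of~(\ref{eq:Rossler_reflected}), as a positive LCE, irreducibility, and the attracting-neighborhood property are all preserved under diffeomorphisms. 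Next I would introduce the perturbation~(\ref{eq:Rossler_reflected_perturbed}), noting that with the parameters fixed below one has $b = \varepsilon^{-1}$ and $c = 1$, so the added terms are $\varepsilon x^2$, $-\varepsilon y + \varepsilon \mu \, x y$ and $\mu(-yz + z^2/5)$; on any fixed compact neighborhood $\mathbb{U}$ of the attractor these, together with their gradients, are uniformly $O(\varepsilon + \mu)$, hence arbitrarily small. By Definition~\ref{def:robustness}, the perturbed DS~(\ref{eq:Rossler_reflected_perturbed}) therefore retains a chaotic attractor $\mathbb{V}' \subset \mathbb{U}$ for all sufficiently small $\varepsilon, \mu > 0$.

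The remaining maps are exact and invertible, so they transport $\mathbb{V}'$ without further loss. I would translate via $(\bar x, \bar y, \bar z) = (x,y,z) + (a,b,c)/\mu$ and then rescale by positive factors to reach~(\ref{eq:CDS_Rossler}): the large translation $\mathbf{a}/\mu$ places $\mathbb{V}'$ in $\mathbb{R}_{>}^3$ (QCM property~(ii)), and positive rescaling preserves both the orthant and chemicality by Lemma~\ref{lemma:scaling_permutation}. Chemicality of the translated system is exactly what the splitting in the worked construction guarantees (via Lemma~\ref{lemma:DS_quadratic}, Lemma~\ref{lemma:DS_linear_2}, Theorem~\ref{theorem:universal} and Lemma~\ref{lemma:splitting}), so only the constants remain to be pinned down. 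The key algebraic observation is that choosing $(a,b,c) = (\varepsilon^{-2}, \varepsilon^{-1}, 1)$ makes $\varepsilon b - c = 0$, so the constant term $\mu^{-1}(\varepsilon b - c)$ of the $\bar y$-equation in~(\ref{eq:Rossler_reflected_perturbed_translated}) vanishes; this drops $M_0$ from $2$ to $1$ and yields an $(11,5)$ rather than a $(12,5)$ CDS. The chemicality constraints $a \ge b/\varepsilon$ and $b \ge c/\varepsilon$ hold with equality for this choice.

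It then remains to carry out two routine computations. First, rescaling~(\ref{eq:Rossler_reflected_perturbed_translated}) by the positive factors $(\varepsilon\mu,\, 1,\, 5\varepsilon/(5-\varepsilon))$ — all positive for $0 < \varepsilon < 5$ — reproduces~(\ref{eq:CDS_Rossler}) with exactly the coefficients~(\ref{eq:parameters_Rossler}); I would verify this coefficient by coefficient, the most instructive checks being $\alpha_1 = \varepsilon\mu\,(57/(10\varepsilon^2\mu) + 1/5)$ and $\alpha_8 = (\mu/5)\,(5-\varepsilon)/(5\varepsilon)$. Second, I would read off the canonical CRN of~(\ref{eq:CDS_Rossler}) monomial by monomial and fuse the two reactions arising from the shared coefficient $\alpha_5 = 1$ of the $xy$ terms in the first two equations, namely $X + Y \to Y$ and $X + Y \to X + 2Y$, into the single reaction $X + Y \to 2Y$, thereby obtaining the stated $(9,4)$ network~(\ref{eq:CRN_Rossler}).

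The main obstacle is not algebraic but conceptual, and lies entirely in the perturbative step: justifying that the full chaotic-attractor structure — the positive LCE ($\mathcal{P}_1$), irreducibility ($\mathcal{P}_2$) and the attracting neighborhood ($\mathcal{P}_3$) — survives the perturbation. This is precisely what the robustness hypothesis on~(\ref{eq:Rossler}) supplies, but care is needed to measure the perturbation in the pre-translation, pre-rescaling coordinates, where its $C^1$-norm on $\mathbb{U}$ is genuinely small even though the translation vector $\mathbf{a}/\mu$ and the factor $b = \varepsilon^{-1}$ are large. Once $\mathbb{V}'$ is secured there, the subsequent exact affine maps carry it and all of its properties into $\mathbb{R}_{>}^3$, completing the argument.
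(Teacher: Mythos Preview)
Your approach is essentially the paper's: reflect, perturb via~(\ref{eq:Rossler_reflected_perturbed}), invoke robustness in the original coordinates, translate with $(a,b,c)=(\varepsilon^{-2},\varepsilon^{-1},1)$, then positively rescale to reach~(\ref{eq:CDS_Rossler}). The only gap is in your reaction count: fusing only the two $\alpha_5\,xy$ reactions yields a $(10,4)$ CRN, not $(9,4)$. You must also fuse the two canonical reactions coming from the $\alpha_7 z$ monomial shared by the second and third equations, namely $Z\to Y+Z$ and $Z\to 2Z$, into the single reaction $Z\xrightarrow{\alpha_7}Y+2Z$; the $\bar z$-rescaling factor $5\varepsilon/(5-\varepsilon)$ is chosen precisely so that both $z$-coefficients equal $\alpha_7=1/\varepsilon-1/5$ and this second fusion becomes available.
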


\begin{proof}
By assumption, (\ref{eq:Rossler}) has a robust chaotic attractor;
therefore, by Definition~\ref{def:robustness}, 
there exists $\varepsilon_0 > 0$ such that 
for every $\varepsilon \in (0,\varepsilon_0)$,
and for every $a,b,c > 0$,
there exists $\mu_0 > 0$ such that
for every $\mu \in (0,\mu_0)$
the perturbed system~(\ref{eq:Rossler_reflected_perturbed})
also has a robust chaotic attractor. 
Being a translation of~(\ref{eq:Rossler_reflected_perturbed}), 
DS~(\ref{eq:Rossler_reflected_perturbed_translated})
also has a robust chaotic attractor, which is 
in $\mathbb{R}_{>}^3$ for every sufficiently small $\mu$.

Choosing any $a,b,c > 0$ such that 
$a \ge b/\varepsilon$ and $b \ge c/\varepsilon$
ensures that~(\ref{eq:Rossler_reflected_perturbed_translated}) 
is chemical. To reduce the number of the induced chemical 
reactions, let us choose $b = c/\varepsilon$ 
and e.g. $a = c/\varepsilon^2$ and $c = 1$,
under which choice the CDS becomes
\begin{align}
\frac{\mathrm{d} \bar{x}}{\mathrm{d} t}
& = \left(\frac{57}{10 \varepsilon^2 \mu} + \frac{1}{5}  \right)
- \left( \frac{1}{\varepsilon \mu} + \frac{57}{10} \right) \bar{x}
+ \frac{1}{\varepsilon^2 \mu} \bar{y} 
+ \varepsilon \bar{x}^2 - \bar{x} \bar{y}, 
\nonumber \\
\frac{\mathrm{d} \bar{y}}{\mathrm{d} t}
& = - \left(\frac{1}{\varepsilon} + \varepsilon \right) \bar{y} + \bar{z}
+ \varepsilon \mu \bar{x} \bar{y}, \nonumber \\
\frac{\mathrm{d} \bar{z}}{\mathrm{d} t}
& = \left(\frac{1}{\varepsilon} - \frac{1}{5}  \right) \bar{z}
+ \frac{\mu}{5} \bar{z}^2 
- \mu \bar{y} \bar{z}.
\label{eq:Rossler_c_prescale}
\end{align}
To further reduce the number of reactions, 
let us rescale via $\bar{x} \to \bar{x}/(\varepsilon \mu)$
and $\bar{z} \to (1/\varepsilon - 1/5) \bar{z}$, 
which ensures that the terms
$\bar{x} \bar{y}$ in the first 
and second equations, and the terms 
$z$ in the second and third equations, 
are multiplied, up to sign, by the same coefficients,
thus allowing for fusion of the underlying reactions
(see Appendix~\ref{app:CRN}).
Removing the bars, (\ref{eq:Rossler_c_prescale}) 
then becomes the CDS~(\ref{eq:CDS_Rossler}).
\end{proof}

\begin{example}
\label{ex:chemical_Rossler}
Under the parameter choice 
$\varepsilon = 10^{-3}$ and $\mu = 10^{-2}$,
\emph{CDS}~$(\ref{eq:CDS_Rossler})$ becomes
\begin{align}
\frac{\mathrm{d} x}{\mathrm{d} t}
& = 5700.000002 - 100005.7 x + 1000 y
+ 100 x^2 - x y, \nonumber \\
\frac{\mathrm{d} y}{\mathrm{d} t}
& = - 1000.001 y + 999.8 z + x y, \nonumber \\
\frac{\mathrm{d} z}{\mathrm{d} t}
& = 999.8 z + 1.9996 z^2 - 0.01 y z.
\label{eq:CDS_Rossler_p}
\end{align}
In \emph{Figure}~$\ref{fig:CDS_R}(a)$, we display the numerically
observed chaotic attractor of the (reflected) 
R\"ossler system~$(\ref{eq:Rossler_reflected})$
in the $(y,z)$-space, while in \emph{Figure}~$\ref{fig:CDS_R}(b)$
the corresponding trajectory in the $(t,y)$-space.
Furthermore, in \emph{Figure}~$\ref{fig:CDS_R}(c)$, we show
the three finite-time \emph{LCE}s over the time-interval 
$t \in [0,50]$, along with the values
at $t = 10^6$ shown as the dashed lines. 
The positive \emph{LCE} is consistent with chaos; 
see \emph{Appendix~\ref{app:LCE}} for more details
about \emph{LCE}s, including a numerical method 
used in this paper to approximate them.
Analogous plots are shown in \emph{Figure}~$\ref{fig:CDS_R}(d)$--$(f)$ 
for the perturbed R\"ossler system~$(\ref{eq:Rossler_reflected_perturbed})$
with $b = \varepsilon^{-1}$ and $c = 1$;
parameters $\varepsilon = 10^{-3}$ and $\mu = 10^{-2}$
are chosen so that there is a good match with the original attractor.
In particular, note that the \emph{LCE}s of~$(\ref{eq:Rossler_reflected_perturbed})$ are relatively close
to of those of~$(\ref{eq:Rossler_reflected})$, 
numerically indicating robustness of the underlying chaotic attractor.
Finally, in \emph{Figure}~$\ref{fig:CDS_R}(g)$--$(i)$, we show
analogous plots for the perturbed, translated and rescaled
\emph{CDS}~$(\ref{eq:CDS_Rossler})$, 
which is under the chosen parameter values equivalent 
to~$(\ref{eq:CDS_Rossler_p})$.
Note that, since infinite-time \emph{LCE}s are 
invariant under affine transformations, 
systems~$(\ref{eq:Rossler_reflected_perturbed})$ 
and~$(\ref{eq:CDS_Rossler})$ have 
identical \emph{LCE}s in the long-run.
\end{example}

\subsubsection{One-wing chaos}
The $(11,5)$ chemical R\"ossler system~(\ref{eq:CDS_Rossler})
with $(9,4)$ CRN~(\ref{eq:CRN_Rossler})
has one fewer quadratic than the $(9,6)$ Willamowski–R\"ossler 
system~(\ref{eq:Willamowski_Rossler_CDS}) 
with $(7,4)$ CRN~(\ref{eq:Rossler_CRN});
however, they contain the same number of quadratic reactions. 
Let us now construct an even simpler CDS with one-wing chaos.
To this end, consider the quadratic DS
\begin{align}
\frac{\mathrm{d} x}{\mathrm{d} t} & = - y + x^2, \nonumber \\
\frac{\mathrm{d} y}{\mathrm{d} t} & = \frac{27}{10} x + z, \nonumber \\
\frac{\mathrm{d} z}{\mathrm{d} t} & = x + y,
\label{eq:DS_1}
\end{align}
obtained via permutation $x \to y$
in Sprott system P~\cite{Sprott}[Table 1].
We display the numerically observed chaotic attractor
of~(\ref{eq:DS_1}) in Figure~\ref{fig:CDS_1}(a)--(b),
and the LCEs in Figure~\ref{fig:CDS_1}(c).

\begin{figure}[!htbp]
\vskip -0.0cm
\leftline{\hskip 
0.1cm (a) DS~(\ref{eq:DS_1}) \hskip  
4.1cm (b) \hskip  
5.3cm (c)}
\centerline{
\hskip 0.0cm
\includegraphics[width=0.35\columnwidth]{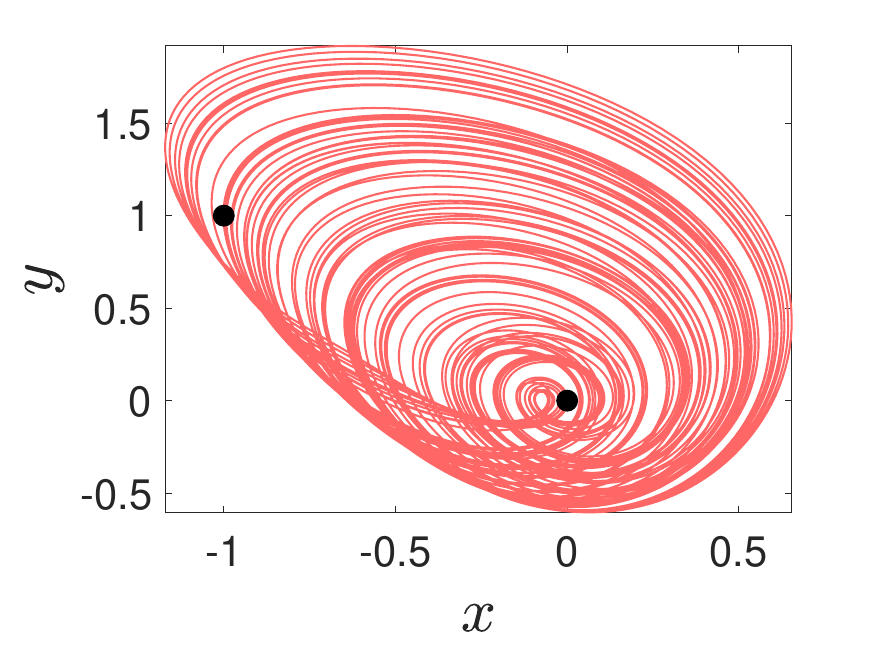}
\hskip 0.2cm
\includegraphics[width=0.35\columnwidth]{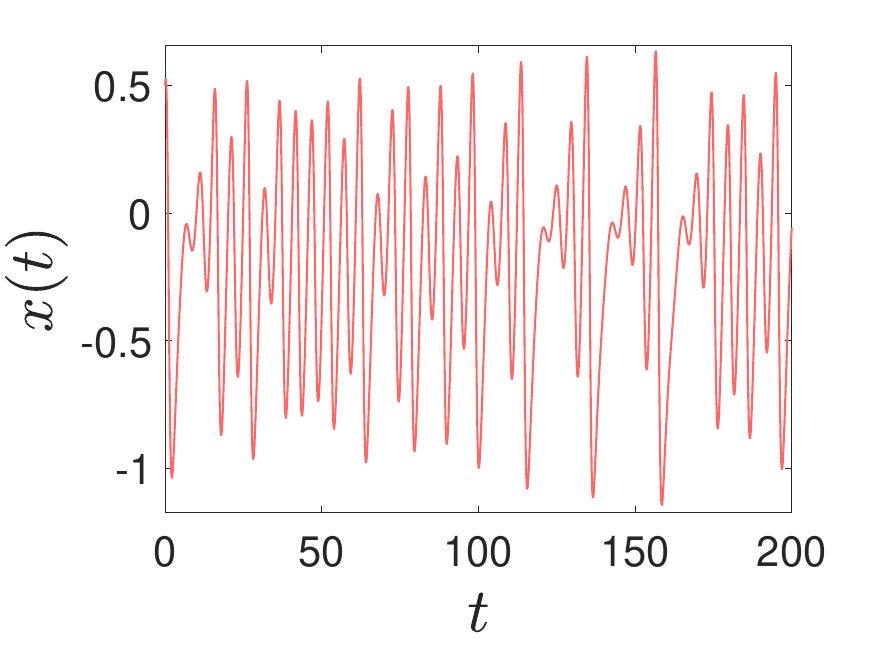}
\hskip 0.2cm
\includegraphics[width=0.35\columnwidth]{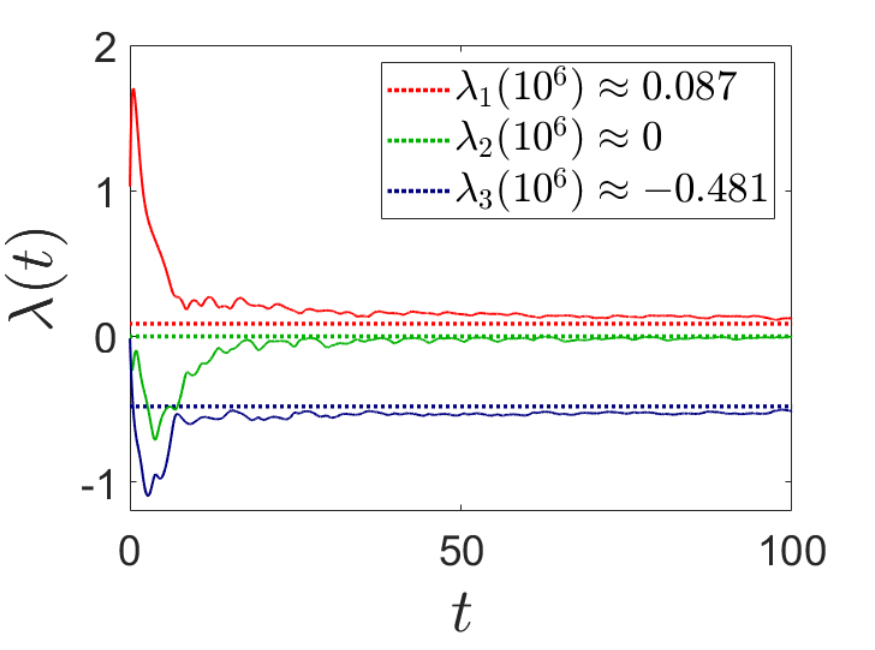}
}
\leftline{\hskip 
0.1cm (d) CDS~(\ref{eq:CDS_1}) \hskip  
3.9cm (e) \hskip  
5.3cm (f)}
\vskip 0.2cm
\centerline{
\hskip 0.0cm
\includegraphics[width=0.35\columnwidth]{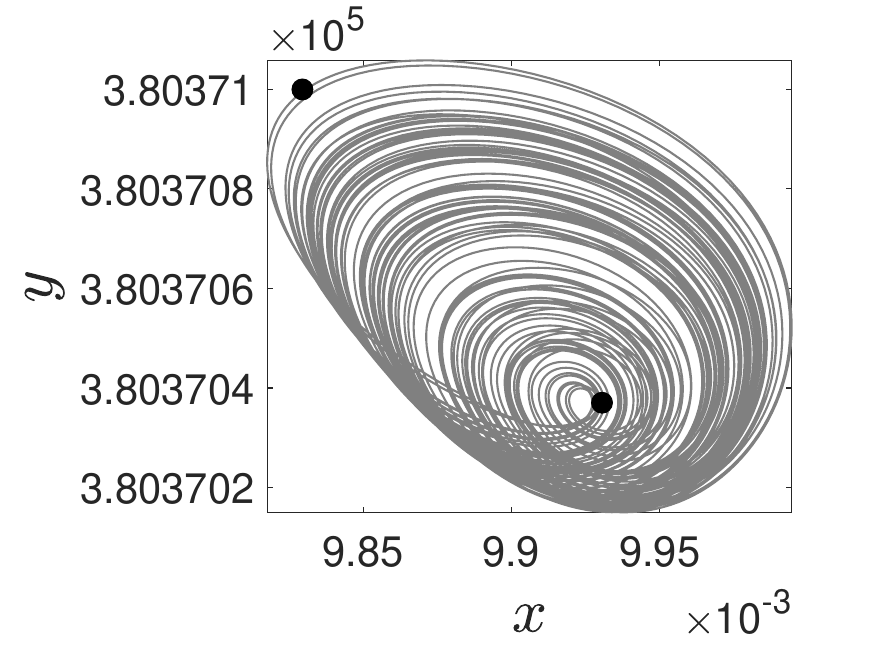}
\hskip 0.2cm
\includegraphics[width=0.35\columnwidth]{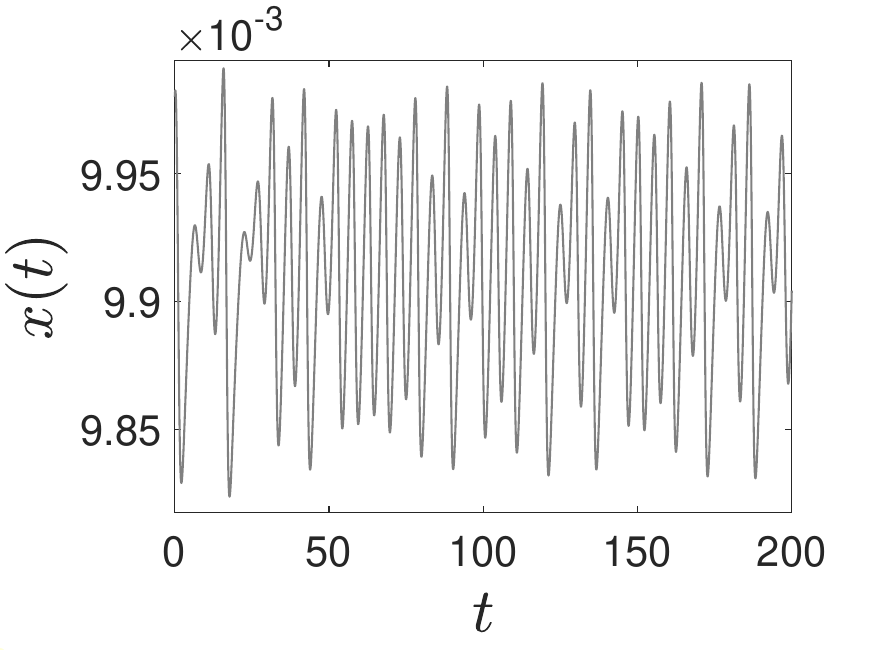}
\hskip 0.2cm
\includegraphics[width=0.35\columnwidth]{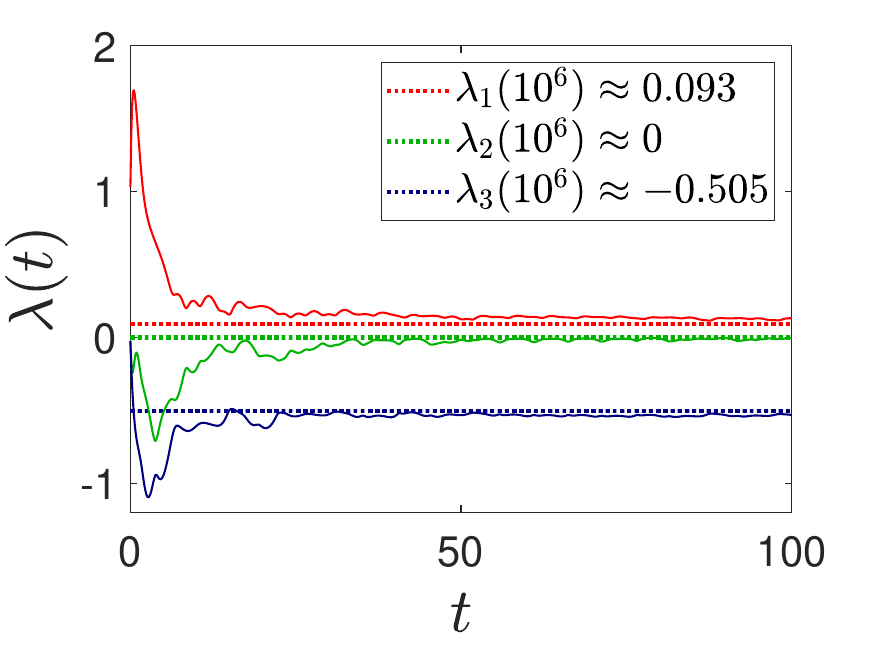}
}
\vskip -0.2cm
\caption{\it{\emph{Quadratic $(10,3)$ CDS with one-wing chaos.} 
Panels \emph{(a)}--\emph{(b)} display 
respectively the $(x,y)$-
and $(t,x)$-space for \emph{DS}~$(\ref{eq:DS_1})$
with initial condition $(x_0,y_0,z_0)
= (0.5,0,0)$; the two equilibria 
are shown as black dots in \emph{(a)}.
Panel \emph{(c)} shows
the three finite-time \emph{LCE}s, 
along with the values
at $t = 10^6$ shown as the dashed lines.
Panels \emph{(d)}--\emph{(e)} show analogous plots 
for \emph{CDS}~$(\ref{eq:CDS_1})$
with $(\varepsilon,\mu) = (10^{-2},10^{-2})$
and the translated and rescaled initial condition 
$x_0 = |\varepsilon^{-2} + 2.7 \varepsilon^{-1} - 2 \mu^{-1}|^{-1} 
(0.5 + \mu^{-1})$, 
$y_0 = (10/27) ((\varepsilon^2 \mu)^{-1} + 2.7 (\varepsilon \mu)^{-1})$,
$z_0 = (\varepsilon \mu)^{-1}$. 
Panel \emph{(f)} shows the \emph{LCE}s 
for the perturbed \emph{DS}~$(\ref{eq:1_perturbed})$
with $a = 1$ and $c = \varepsilon^{-1}$, 
which are in the long-run identical
to the long-time \emph{LCE}s of~$(\ref{eq:CDS_1})$.}} 
\label{fig:CDS_1}
\end{figure}

\begin{theorem}$($\textbf{\emph{One-wing chaos}}$)$ 
\label{theorem:one_wing}
Consider three-dimensional quadratic $(10,3)$ \emph{CDS}
\begin{align}
\frac{\mathrm{d} x}{\mathrm{d} t} 
& = \alpha_1 + \alpha_2 x + \alpha_3 x^2
- \alpha_4 x y, \nonumber \\
\frac{\mathrm{d} y}{\mathrm{d} t} 
& = |\alpha_2| x - \alpha_5 y + \alpha_6 z, 
\nonumber \\
\frac{\mathrm{d} z}{\mathrm{d} t} 
& = |\alpha_2| x - \alpha_7 z + \alpha_8 y z, 
\label{eq:CDS_1}
\end{align}
with $(8,3)$ \emph{CRN}
\begin{align}
\varnothing  & \xrightarrow[]{\alpha_1} X, \; \; \; 
X \xrightarrow[]{|\alpha_2|} X + 
\textrm{\emph{sign}}(\alpha_2) X + Y + Z,  \; \; \; 
2 X \xrightarrow[]{\alpha_3} 3 X, \; \; \; 
X + Y \xrightarrow[]{\alpha_4} Y,\nonumber \\
Y & \xrightarrow[]{\alpha_5} \varnothing, \; \; \; 
Z \xrightarrow[]{\alpha_6} Y + Z, \; \; \; 
Z \xrightarrow[]{\alpha_7} \varnothing, \; \; \; 
Y + Z \xrightarrow[]{\alpha_8} Y + 2 Z,
\label{eq:CRN_1}
\end{align}
and parameters
\begin{align}
\alpha_1 & = \frac{1}{\mu^2 |\alpha_2|}, \; \; 
\alpha_2 = \frac{1}{\varepsilon^2} 
+ \frac{27}{10 \varepsilon} - \frac{2}{\mu}, \; \; 
\alpha_3 = |\alpha_2|, \; \; \alpha_4 = \frac{27}{10} \mu, \nonumber \\
\alpha_5 & = \varepsilon, \; \; 
\alpha_6 = \frac{10}{27}, \; \; 
\alpha_7 = \frac{1}{\varepsilon} + \frac{27}{10} + \varepsilon, \; \; 
\alpha_8 =  \frac{27}{10} \varepsilon \mu .
\label{eq:parameters_1}
\end{align}
Assume that \emph{DS}~$(\ref{eq:DS_1})$
has a robust one-wing chaotic attractor in $\mathbb{R}^3$. 
Then, \emph{CDS}~$(\ref{eq:CDS_1})$ has a 
one-wing chaotic attractor in $\mathbb{R}_{>}^3$
for every sufficiently small $\varepsilon > 0$ and $\mu > 0$. 
\end{theorem}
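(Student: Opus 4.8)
The plan is to run the same three-stage argument as in Theorem~\ref{theorem:Rossler} --- perturb, translate, rescale --- now applied to the Sprott-P system~(\ref{eq:DS_1}). This system already sits in the normal form of Lemma~\ref{lemma:1}: its single quadratic monomial $x^2$ occurs in the first equation with a positive coefficient, so no additional permutation or reflection is required.

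First I would split~(\ref{eq:DS_1}) and perturb it with the refined maps of Section~\ref{sec:DS_to_CDS}. In the first equation, $x^2$ is already chemical and is handled as the $\beta_{1,1} x_1^2$ case of Theorem~\ref{theorem:DS_quadratic} (no auxiliary perturbation), while the non-chemical $-y$ is treated by the universal perturbation $-(\mu/a)\,xy$ of Theorem~\ref{theorem:universal}; the second equation has only chemical linear terms and receives the linear perturbation $-\varepsilon y$ of Lemma~\ref{lemma:DS_linear_2}; in the third equation I place $x$ in the linear part (perturbed by $-\varepsilon z$, again via Lemma~\ref{lemma:DS_linear_2}) and $y$ in the remainder (perturbed by the universal term $(\mu/c)\,yz$). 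This yields the perturbed system
\begin{align}
\frac{\mathrm{d} x}{\mathrm{d} t} & = -y + x^2 - \frac{\mu}{a} x y, \nonumber \\
\frac{\mathrm{d} y}{\mathrm{d} t} & = \frac{27}{10} x + z - \varepsilon y, \nonumber \\
\frac{\mathrm{d} z}{\mathrm{d} t} & = x + y - \varepsilon z + \frac{\mu}{c} y z.
\label{eq:1_perturbed}
\end{align}
The added terms and their gradients vanish uniformly on compacta as $\varepsilon,\mu \to 0$, so~(\ref{eq:1_perturbed}) is an arbitrarily small perturbation of~(\ref{eq:DS_1}); since the latter is assumed to carry a robust one-wing chaotic attractor, Definition~\ref{def:robustness} provides such an attractor for~(\ref{eq:1_perturbed}) once $\varepsilon$, and then $\mu$, are small enough.

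Next I would translate by $\bar{\mathbf{x}} = \mathbf{x} + \mathbf{a}/\mu$ and use Lemmas~\ref{lemma:DS_quadratic} and~\ref{lemma:DS_linear_2}, Theorem~\ref{theorem:universal} and the Splitting Lemma~\ref{lemma:splitting} to certify that the translated system is a CDS. The key is to set $a = 1$, $c = 1/\varepsilon$ and $b = \varepsilon^{-2} + (27/10)\,\varepsilon^{-1}$, which are exactly the values making the chemicality constraints $\varepsilon c \ge a$ (third equation) and $\varepsilon b \ge (27/10)\,a + c$ (second equation) hold \emph{with equality}; this secures non-negativity and simultaneously annihilates the zero-degree monomials in the second and third equations. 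Since the translation is affine with $\|\mathbf{a}/\mu\| \to \infty$, the one-wing attractor persists and is carried into $\mathbb{R}_{>}^3$ for every sufficiently small $\mu$.

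Finally I would rescale by the positive factors $x \to |\alpha_2|\,x$, $y \to (27/10)\,y$, $z \to z$; by Lemma~\ref{lemma:scaling_permutation} this keeps the system chemical, and being affine it preserves the one-wing chaotic attractor. A direct substitution then matches all coefficients to~(\ref{eq:parameters_1}); the rescaling is chosen precisely so that the three linear reactions consuming $X$ acquire the common rate $|\alpha_2|$ and fuse into the single reaction of CRN~(\ref{eq:CRN_1}), giving exactly~(\ref{eq:CDS_1}). The main obstacle is the bookkeeping of this last stage: one must verify that the composed perturbation--translation--rescaling lands on precisely~(\ref{eq:parameters_1}), and that taking the equality cases of the constraints (rather than strict inequalities) leaves every monomial chemical. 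The one-wing property itself transfers for free, being invariant under the reflections, translations and positive rescalings used and robust by hypothesis.
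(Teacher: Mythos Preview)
Your proposal is correct and follows essentially the same approach as the paper: the same splitting of~(\ref{eq:DS_1}), the same perturbed system~(\ref{eq:1_perturbed}), the same parameter choice $a=1$, $c=1/\varepsilon$, $b=\varepsilon^{-2}+(27/10)\varepsilon^{-1}$ (chosen to hit the chemicality constraints with equality and kill the constant terms in the second and third equations), and the same final rescaling $\bar{x}\to|\alpha_2|\bar{x}$, $\bar{y}\to(27/10)\bar{y}$ to enable fusion of the $X$-reactions. Your write-up is slightly more explicit about why the equality cases suffice and how robustness transfers through each stage, but the argument is the paper's.
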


\begin{proof}
Let us split DS~(\ref{eq:DS_1}) as follows:
$f_1(x,y) = - y + x^2 = m(x) + r_1(y)$ with $m(x) = x^2$, 
$f_2(x,z) = 27 x/10 + z = l_2(x,z)$ and
$f_3(x,y) = x + y = l_3(x) + r_3(y)$ with $l_3(x) = x$. 
Perturbing then $m(x,y)$
according to Theorem~\ref{theorem:DS_quadratic}(i), 
$l_2(x,z)$ and $l_3(x)$ according to Lemma~\ref{lemma:DS_linear_2},
and $r_1(y)$ and $r_3(y)$ 
via Theorem~\ref{theorem:universal}, 
one obtains
\begin{align}
\frac{\mathrm{d} x}{\mathrm{d} t} & = - y + x^2 - \frac{\mu}{a} x y, 
\nonumber \\
\frac{\mathrm{d} y}{\mathrm{d} t} & = \frac{27}{10} x + z - \varepsilon y, 
\nonumber \\
\frac{\mathrm{d} z}{\mathrm{d} t} & = x + y - \varepsilon z
+ \frac{\mu}{c} y z.
\label{eq:1_perturbed}
\end{align}
Under the translation
$(\bar{x},\bar{y},\bar{z}) = (x,y,z) + (a,b,c)/\mu$, 
one then obtains
\begin{align}
\frac{\mathrm{d} \bar{x}}{\mathrm{d} t} 
& = \frac{a^2}{\mu^2} + \left(-\frac{2 a}{\mu} + \frac{b}{a} \right) \bar{x} + \bar{x}^2 - \frac{\mu}{a} \bar{x} \bar{y}, \nonumber \\
\frac{\mathrm{d} \bar{y}}{\mathrm{d} t} 
& = \frac{1}{\mu} \left(- \frac{27}{10} a - c + \varepsilon b\right)
+ \frac{27}{10} \bar{x} - \varepsilon \bar{y}  + \bar{z}, \nonumber \\
\frac{\mathrm{d} \bar{z}}{\mathrm{d} t} 
& = \frac{1}{\mu} (-a + \varepsilon c)
+ \bar{x} - \left(\frac{b}{c} + \varepsilon \right) \bar{z}
+ \frac{\mu}{c} \bar{y} \bar{z}.
\label{eq:1_perturbed_translated}
\end{align}
Sufficient to make this system chemical is by choosing
$b = a/\varepsilon^2 + 27a/(10 \varepsilon)$, 
$c = a/\varepsilon$ and e.g. $a = 1$.
Then, by rescaling $\bar{x} \to 
|\frac{1}{\varepsilon^2} 
+ \frac{27}{10 \varepsilon} - \frac{2}{\mu}| \bar{x}$
and $\bar{y} \to 27 \bar{y}/10$, the monomials
$\bar{x}$ are multiplied, up to sign, by the same
coefficients, thus allowing for fusion of the underlying 
three reactions (see Appendix~\ref{app:CRN}).
Removing the bars, DS~(\ref{eq:1_perturbed_translated}) 
then becomes the CDS~(\ref{eq:CDS_1}), and the 
theorem then follows from the assumption that 
the chaotic attractor of~(\ref{eq:DS_1})
and its one-wingedness are robust, 
and from the fact that one-wingedness is 
invariant under affine transformations.
\end{proof}

\begin{example}
Let us choose $\varepsilon = \mu = 10^{-2}$
in~$(\ref{eq:parameters_1})$.
The numerically observed one-wing chaotic attractor 
of \emph{CDS}~$(\ref{eq:CDS_1})$ is then shown  
in the $(x,y)$-space in \emph{Figure~\ref{fig:CDS_1}(d)},
while in \emph{Figure~\ref{fig:CDS_1}(e)} in the $(t,x)$-space.
The associated \emph{LCE}s are shown in~\emph{Figure~\ref{fig:CDS_1}(f)},
and are relatively close to those of~$(\ref{eq:DS_1})$.
See also \emph{Figure~\ref{fig:CDS_0}(a)} for 
the one-wing chaotic attractor in the $(x,y,z)$-space.
\end{example}

\subsection{Chaotic DSs with two quadratic monomials}
\label{sec:two_quadratics}
For quadratic DSs with only two quadratic terms, 
Theorem~\ref{theorem:DS_quadratic} implies the following result.

\begin{theorem} 
\label{theorem:two_quadratic}
Assume that $N$-dimensional quadratic \emph{DS},
with only two quadratic monomials, 
has a robust bounded invariant set in 
$\mathbb{R}^N$ with robust properties 
$\mathcal{P}$ that are invariant under permutations, reflections 
and translations of the dependent variables. 
Then, there exists an $N$-dimensional cubic \emph{CDS},
with at most one cubic monomial,
with a bounded invariant set in $\mathbb{R}_{>}^N$ 
with properties $\mathcal{P}$. 
\end{theorem}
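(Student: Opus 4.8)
The plan is to derive this exactly as an application of Theorem~\ref{theorem:DS_quadratic}, in the same spirit as Theorem~\ref{theorem:one_quadratic}, the only new ingredient being the specialization to $M_2 = 2$ quadratic monomials. First I would invoke Theorem~\ref{theorem:DS_quadratic} on the given $N$-dimensional quadratic DS: since it has exactly $M_2 = 2$ quadratic monomials, the theorem produces an $N$-dimensional cubic CDS with $M_3 \le M_2 - 1 = 1$ cubic monomials, reached from the original DS through a composition of permutations, reflections, and a QCM. This immediately secures the structural part of the claim, namely that the target CDS has at most one cubic monomial; it then remains to track the bounded invariant set and its properties $\mathcal{P}$ through each transformation and to ensure the final set lies in $\mathbb{R}_{>}^N$.

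Second, I would argue that the bounded invariant set and its properties survive each of the three transformation types. Permutations and reflections are linear isometries (coordinate relabelings and sign flips), so they carry bounded invariant sets to bounded invariant sets, and by hypothesis the properties $\mathcal{P}$ are invariant under them. The QCM, by Definition~\ref{def:QCM}, factors as an arbitrarily small smooth perturbation of the vector field followed by a translation of the dependent variables. Here I would note that the perturbations supplied by Theorem~\ref{theorem:DS_quadratic} are polynomials whose coefficients vanish as the parameters tend to zero, so that both $\|\mathbf{p}(\mathbf{x})\|$ and $\|\nabla \mathbf{p}(\mathbf{x})\|$ can be made uniformly small on any fixed compact set; this is precisely the quantity controlled in Definition~\ref{def:robustness}. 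Invoking robustness then guarantees that, once the perturbation is sufficiently small, the perturbed DS retains a bounded invariant set $\mathbb{V}'$ inside a fixed compact neighborhood with the same properties $\mathcal{P}$, and the subsequent translation is affine and preserves $\mathcal{P}$ by hypothesis.

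Third, I would place the resulting set in the positive orthant. By the Remark following Definition~\ref{def:QCM}, the translation vector $\mathbf{T}(\boldsymbol{\mu})$ in a QCM can be made arbitrarily large as $\|\boldsymbol{\mu}\| \to 0$, so the bounded set $\mathbb{V}'$ can be shifted entirely into $\mathbb{R}_{>}^N$. The one point requiring care is the order of the quantifiers: the QCM from Theorem~\ref{theorem:DS_quadratic} depends on a perturbation size $\varepsilon$ and a translation scale $\mu$, the latter entering both the perturbation $\mu x_i r_i(\mathbf{x})/a_i$ and the translation $\mathbf{a}/\mu$. I would therefore fix $\varepsilon$ small enough for robustness to apply, and then choose $\mu$ small enough that simultaneously (a) the full perturbation stays within the robustness threshold, (b) the chemicality constraints on $\mathbf{a}$ hold, and (c) the translation $\mathbf{a}/\mu$ is large enough to position $\mathbb{V}'$ in $\mathbb{R}_{>}^N$. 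These requirements are mutually compatible because each holds in the limit $\mu \to 0$ after $\varepsilon$ has been fixed.

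The main obstacle, such as it is, is not deep: the substantive content has already been discharged in Theorem~\ref{theorem:DS_quadratic}, and the bound ``at most one cubic'' is exactly $M_3 \le M_2 - 1$ evaluated at $M_2 = 2$. The only genuine care needed is the simultaneous choice of $(\varepsilon, \mu)$ described above; the preservation of $\mathcal{P}$ then follows at once from the invariance hypotheses for permutations, reflections, and translations, together with Definition~\ref{def:robustness}.
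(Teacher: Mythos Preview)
Your proposal is correct and follows the same approach as the paper: the paper's proof is a single sentence stating that the result follows by analogous arguments as for Theorem~\ref{theorem:one_quadratic}, which in turn invokes Theorem~\ref{theorem:DS_quadratic} together with Definitions~\ref{def:QCM} and~\ref{def:robustness}. Your write-up simply unpacks this one-line argument in more detail, including the specialization $M_3 \le M_2 - 1 = 1$ and the order in which $\varepsilon$ and $\mu$ are chosen, none of which the paper spells out explicitly.
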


\begin{proof}
The statement of the theorem follows by 
analogous arguments as for Theorem~\ref{theorem:one_quadratic}.
\end{proof}

Using the Lorenz system~\cite{Lorenz}, one
can then deduce the following result,
which can be seen as an improvement of~\cite{QCM}[Theorem 5.2]. 
\begin{corollary}
\label{corollary:Lorenz}
There exists a three-dimensional cubic \emph{CDS},
with only one cubic monomial, 
that has a chaotic attractor in $\mathbb{R}_{>}^3$.
\end{corollary}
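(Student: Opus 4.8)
The plan is to apply Theorem~\ref{theorem:two_quadratic} to the Lorenz system~\cite{Lorenz}, which is the canonical example of a three-dimensional quadratic DS carrying exactly two quadratic monomials. Written in the standard form
\begin{align*}
\frac{\mathrm{d} x}{\mathrm{d} t} &= \sigma(y - x), \\
\frac{\mathrm{d} y}{\mathrm{d} t} &= \rho x - y - x z, \\
\frac{\mathrm{d} z}{\mathrm{d} t} &= x y - \beta z,
\end{align*}
with the classical parameters $(\sigma, \rho, \beta) = (10, 28, 8/3)$, the only quadratic monomials are $x z$ in the second equation and $x y$ in the third, so that $M_2 = 2$; this is precisely the setting of Theorem~\ref{theorem:two_quadratic} and, underlying it, of Theorem~\ref{theorem:DS_quadratic}. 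First I would verify the remaining hypotheses of Theorem~\ref{theorem:two_quadratic}: that the Lorenz system admits a bounded chaotic attractor and that the defining properties $\mathcal{P}$ are invariant under the permutations, reflections and translations employed by a QCM.

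The existence of a bounded chaotic attractor $\mathbb{V} \subset \mathbb{R}^3$ for these parameters is exactly the content of Tucker's theorem~\cite{Tucker}. The properties $\mathcal{P}$ that define a chaotic attractor — a positive Lyapunov characteristic exponent $(\mathcal{P}_1)$, topological transitivity $(\mathcal{P}_2)$, and attraction of a neighborhood $(\mathcal{P}_3)$ — are invariant under invertible affine changes of coordinates, since permutations, reflections and translations are diffeomorphisms of $\mathbb{R}^3$ that preserve the topological and differentiable structure on which $\mathcal{P}_1$--$\mathcal{P}_3$ are defined; this is the same invariance already used in Theorem~\ref{theorem:Rossler} and Theorem~\ref{theorem:one_wing}. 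These two hypotheses being in place, the only substantive point left is robustness.

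The crux, and the step I expect to be the main obstacle, is establishing robustness in the precise sense of Definition~\ref{def:robustness}: that for every sufficiently small continuously differentiable perturbation of the vector field on a compact neighborhood of $\mathbb{V}$, the perturbed system still possesses a chaotic attractor with the same properties $\mathcal{P}$. This is where I would invoke the deeper part of the Lorenz theory rather than mere existence of chaos. Tucker's rigorous, computer-assisted analysis~\cite{Tucker} shows that the Lorenz attractor is a singular-hyperbolic (robustly transitive) attractor, and by the structural theory of such attractors they persist under small $C^1$ perturbations, remaining transitive and continuing to carry a positive Lyapunov exponent. Granting this robustness, Theorem~\ref{theorem:two_quadratic} applies directly and yields a three-dimensional cubic CDS with a chaotic attractor in $\mathbb{R}_{>}^3$. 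Finally, to obtain \emph{exactly} one cubic monomial I would use the explicit construction in the proof of Theorem~\ref{theorem:DS_quadratic}: since $M_2 = 2$, the QCM maps one of the two quadratics into quadratics and multiplies the other by a single variable to produce one cubic, so that $M_3 = M_2 - 1 = 1$. This improves the two-cubic construction of~\cite{QCM}[Theorem 5.2] and completes the proof.
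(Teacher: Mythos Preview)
Your proposal is correct and follows essentially the same route as the paper: apply Theorem~\ref{theorem:two_quadratic} to the Lorenz system and invoke Tucker~\cite{Tucker} for the required robustness of the chaotic attractor. The paper's proof is a two-line citation of these two facts; your version simply unpacks them (verifying $M_2 = 2$, noting affine invariance of $\mathcal{P}$, and tracing $M_3 = M_2 - 1 = 1$ back to the explicit construction in Theorem~\ref{theorem:DS_quadratic}), which is fine and arguably more transparent.
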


\begin{proof}
The corollary follows from Theorem~\ref{theorem:two_quadratic}
and the fact that the three-dimensional quadratic 
$(7,2)$ Lorenz system has a robust chaotic attractor~\cite{Tucker}.
\end{proof}

\subsubsection{Two-wing chaos}
The three-dimensional quadratic $(7,2)$ Lorenz
and the $(5,2)$ Sprott systems B--C~\cite{Sprott}[Table 1]
display two-wing chaotic attractors, and can
by Theorem~\ref{theorem:DS_quadratic} all be mapped
to three-dimensional cubic CDSs with only one cubic.
In what follows, we perform this construction on
the Sprott system C; in particular, consider the DS
\begin{align}
\frac{\mathrm{d} x}{\mathrm{d} t} & = -1 + y^2, \nonumber \\
\frac{\mathrm{d} y}{\mathrm{d} t} & = - x z, \nonumber \\
\frac{\mathrm{d} z}{\mathrm{d} t} & = y - z,
\label{eq:DS_2}
\end{align}
obtained via permutation $y \to z$, then $x \to y$
and then reflection $x \to - x$ 
in the Sprott system C.
See Figure~\ref{fig:CDS_2}(a)--(c)
for the underlying chaotic attractor and the LCEs.

\begin{figure}[!htbp]
\vskip -2.0cm
\leftline{\hskip 
0.1cm (a) DS~(\ref{eq:DS_2}) \hskip  
4.1cm (b) \hskip  
5.3cm (c)}
\centerline{
\hskip 0.0cm
\includegraphics[width=0.35\columnwidth]{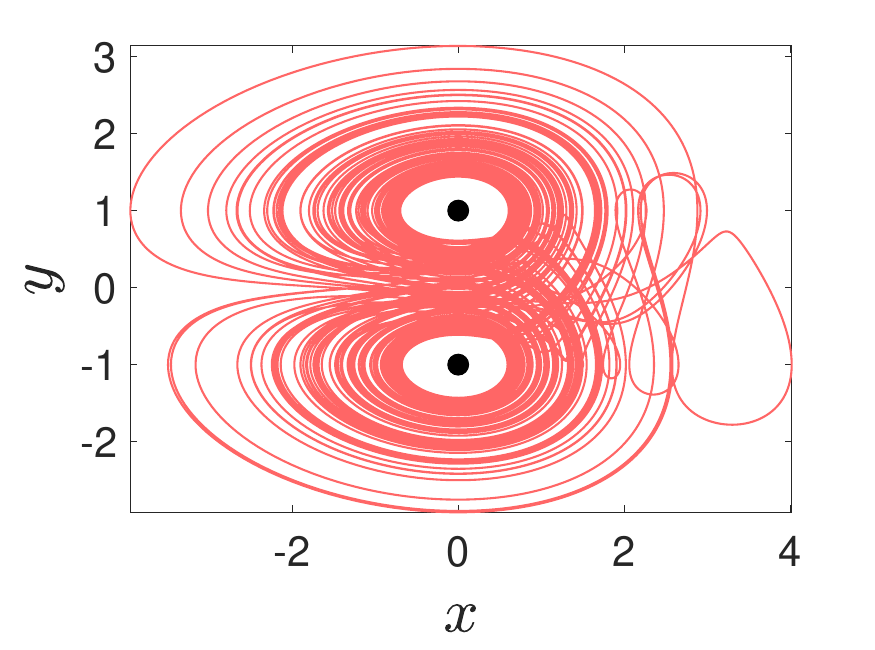}
\hskip 0.2cm
\includegraphics[width=0.35\columnwidth]{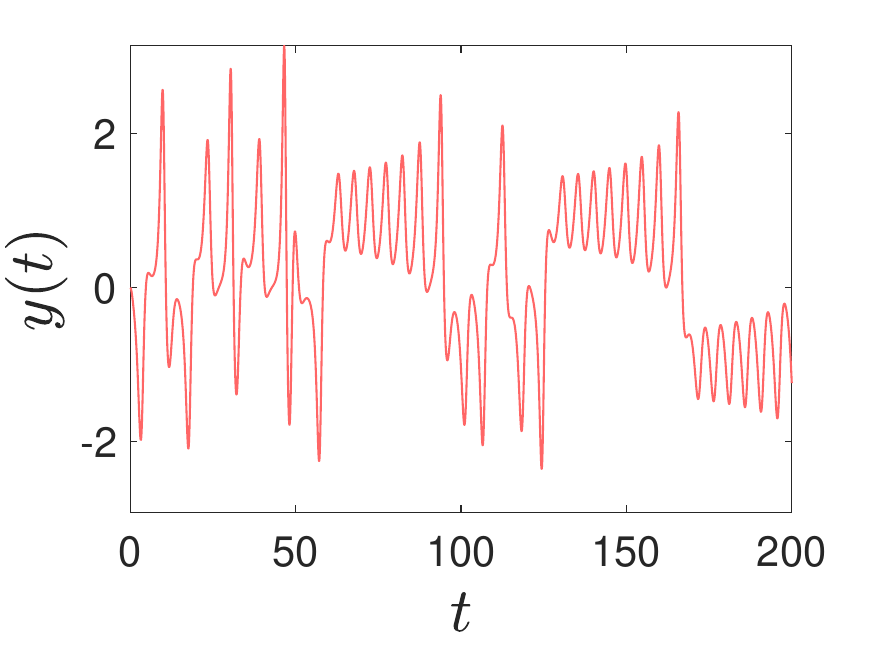}
\hskip 0.2cm
\includegraphics[width=0.35\columnwidth]{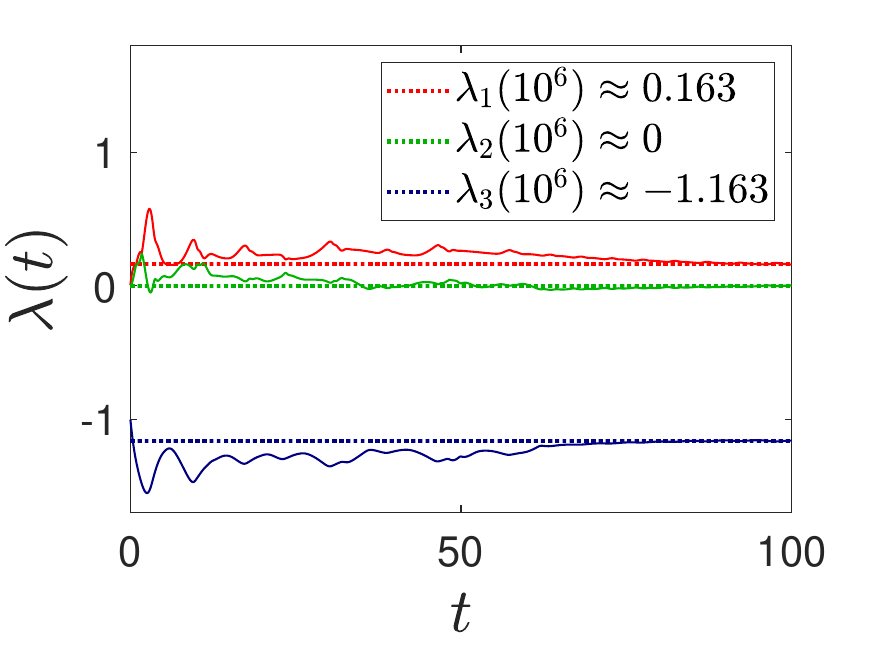}
}
\leftline{\hskip 
0.1cm (d) CDS~(\ref{eq:CDS_2}) \hskip  
3.9cm (e) \hskip  
5.3cm (f)}
\vskip 0.2cm
\centerline{
\hskip 0.0cm
\includegraphics[width=0.35\columnwidth]{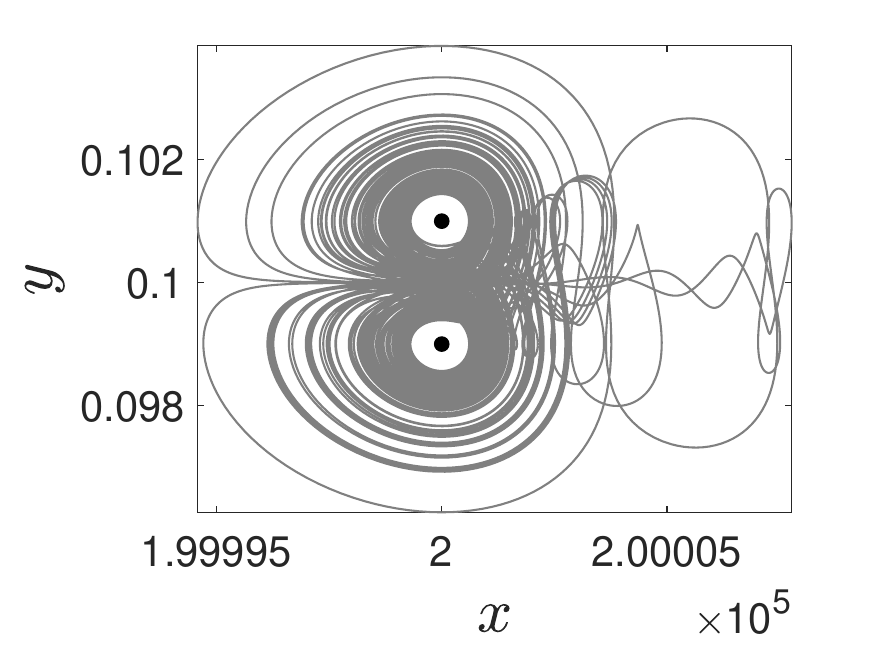}
\hskip 0.2cm
\includegraphics[width=0.35\columnwidth]{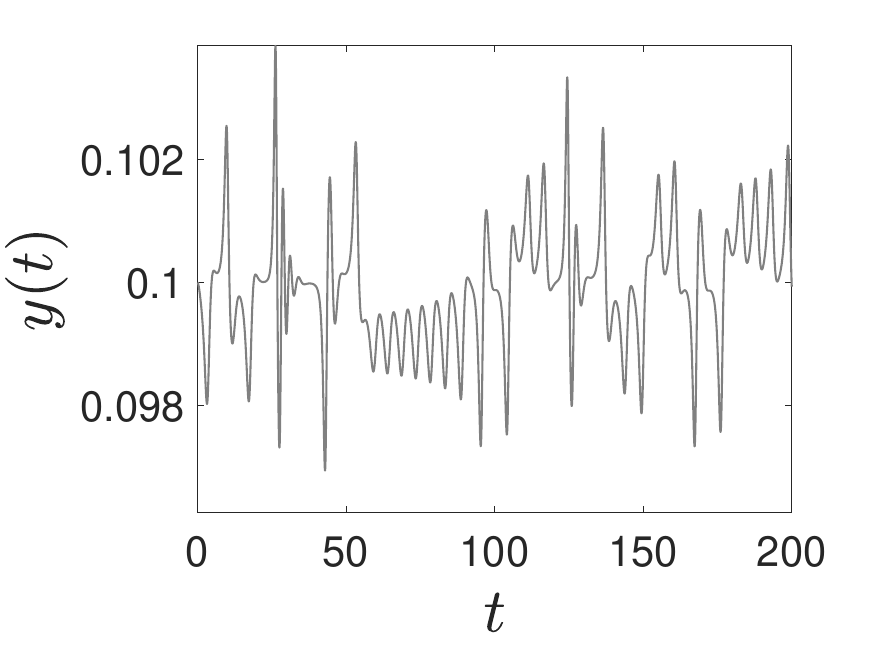}
\hskip 0.2cm
\includegraphics[width=0.35\columnwidth]{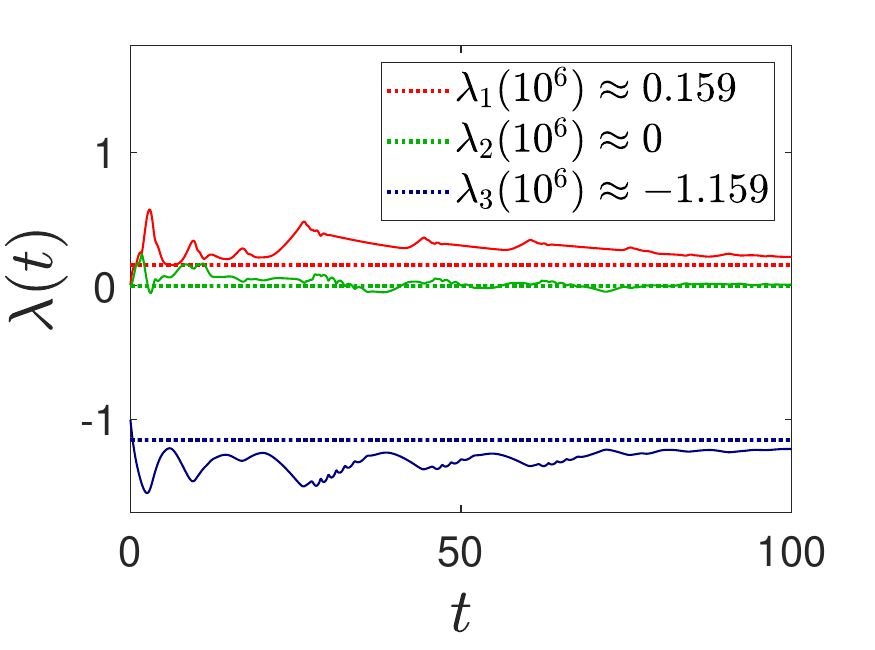}
}
\vskip -0.2cm
\caption{\it{\emph{Cubic $(11,5,1)$ CDS with two-wing chaos.} 
Panels \emph{(a)}--\emph{(b)} display respectively the 
$(x,y)$- and $(t,y)$-space for \emph{DS}~$(\ref{eq:DS_2})$
with initial condition $(x_0,y_0,z_0) = (0,0,-1)$;
the two equilibria are shown as black dots in panel \emph{(a)}.
Panel \emph{(c)} shows
the three finite-time \emph{LCE}s, 
along with the values
at $t = 10^6$ shown as the dashed lines.
Panels \emph{(d)}--\emph{(e)} show analogous plots 
for \emph{CDS}~$(\ref{eq:CDS_2})$
with $(\varepsilon,\mu) = (10^{-3},10^{-2})$
and  the translated and rescaled initial condition 
$x_0 = 2 (\varepsilon \mu)^{-1}$, 
$y_0 = \varepsilon \mu^{-1}$, 
$z_0 = 2 \mu^{-1} (-1 + \mu^{-1})$. 
Panel \emph{(f)} shows the \emph{LCE}s 
for the perturbed \emph{DS}~$(\ref{eq:2_perturbed})$
with $b = 1$, which are in the long-run identical
to the long-time \emph{LCE}s of~$(\ref{eq:CDS_2})$.}} 
\label{fig:CDS_2}
\end{figure}

\begin{theorem}$($\textbf{\emph{Two-wing chaos}}$)$ 
\label{theorem:two_wings}
Consider three-dimensional cubic $(11,5,1)$ \emph{CDS}
\begin{align}
\frac{\mathrm{d} x}{\mathrm{d} t} 
& = \alpha_1 - \alpha_2 x
+ \alpha_3 x^2 + \alpha_4 y^2 - \alpha_5 x y, 
\nonumber \\
\frac{\mathrm{d} y}{\mathrm{d} t} 
& = - \alpha_6 y
+ \alpha_5 x y
+ \alpha_7 y z
- \alpha_8 x y z, \nonumber \\
\frac{\mathrm{d} z}{\mathrm{d} t} 
& = \alpha_{6} y - \alpha_{9} z,
\label{eq:CDS_2}
\end{align}
with $(9,4,1)$ \emph{CRN}
\begin{align}
\varnothing & \xrightarrow[]{\alpha_1} X, \; \; \; 
X \xrightarrow[]{\alpha_2} \varnothing, \; \; \; 
2 X \xrightarrow[]{\alpha_3} 3 X, \; \; \; 
2 Y \xrightarrow[]{\alpha_4} X + 2 Y,  \; \; \; 
X + Y \xrightarrow[]{\alpha_5} 2 Y, \nonumber \\
Y & \xrightarrow[]{\alpha_6} Z , \; \; \; 
Y + Z \xrightarrow[]{\alpha_7} 2 Y + Z,  \; \; \; 
X + Y + Z \xrightarrow[]{\alpha_8} X + Z, \; \; \;
Z \xrightarrow[]{\alpha_9} \varnothing,
\label{eq:CRN_2}
\end{align}
and parameters
\begin{align}
\alpha_1 & = \frac{4}{\varepsilon \mu^2} - \frac{1}{\mu^2} - 1, \; \; 
\alpha_2 = \frac{4}{\mu} - \frac{\varepsilon}{\mu}, \; \;
\alpha_3 = \varepsilon, \; \;  
\alpha_4 = \frac{1}{\varepsilon^2}, \nonumber \\
\alpha_5 & = 1, \; \; 
\alpha_6 = \frac{2}{\varepsilon \mu}, \; \; 
\alpha_7 = \frac{\mu}{\varepsilon}, \; \; 
\alpha_8 = \frac{\mu^2}{2}, \; \; 
\alpha_9 = 1.
\label{eq:parameters_2}
\end{align}
Assume that \emph{DS}~$(\ref{eq:DS_2})$
has a robust two-wing chaotic attractor in $\mathbb{R}^3$. 
Then, \emph{CDS}~$(\ref{eq:CDS_2})$ has a two-wing
chaotic attractor in $\mathbb{R}_{>}^3$
for every sufficiently small $\varepsilon > 0$ and $\mu > 0$. 
\end{theorem}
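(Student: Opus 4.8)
The plan is to reuse the perturb--translate--rescale strategy of the proofs of Theorems~\ref{theorem:Rossler} and~\ref{theorem:one_wing}, now with two quadratic monomials. I would split~(\ref{eq:DS_2}) by isolating $m(\mathbf{x}) = y^2$ in the first equation; since this is of the form $|\beta_{1,2}| x_2^2$, Theorem~\ref{theorem:DS_quadratic}(iii) instructs me to attach the perturbation $p(\mathbf{x};\varepsilon) = \varepsilon(x^2 - x y)$ while keeping $a_1$ free. The remainder of the first equation is the constant $-1$, which I treat together with the universal perturbation $-\tfrac{\mu}{a_1} x$ of Theorem~\ref{theorem:universal}. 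The second equation's only monomial $-x z$ I send to the unique cubic by the universal QCM, adding $\tfrac{\mu}{a_2} y(-x z) = -\tfrac{\mu}{a_2} x y z$. The third equation $y - z$ is already chemical (it equals $y \ge 0$ at $z = 0$), so it needs at most the treatment of Lemma~\ref{lemma:DS_linear_2}. Adding these perturbations and combining them through Lemma~\ref{lemma:splitting} gives the perturbed DS~(\ref{eq:2_perturbed}).

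Next I would translate by $(\bar x,\bar y,\bar z) = (x,y,z) + (a,b,c)/\mu$, writing $a = a_1$, $b = a_2$, $c = a_3$. The second equation is chemical for free: because the universal QCM multiplies it by $\tfrac{\mu}{b}\bar y$, every monomial of $\mathrm{d}\bar y/\mathrm{d} t$ carries a factor $\bar y$, and expanding $-\tfrac{\mu}{b}\bar y\,(\bar x - a/\mu)(\bar z - c/\mu)$ reproduces exactly the structure $-\alpha_6 \bar y + \alpha_5 \bar x\bar y + \alpha_7 \bar y\bar z - \alpha_8 \bar x\bar y\bar z$ of~(\ref{eq:CDS_2}). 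The first equation is where the work concentrates: its translated form must carry no linear $\bar y$ term (none appears in~(\ref{eq:CDS_2})), which forces $a = 2b/\varepsilon$, and I would emphasise that this \emph{same} choice saturates the second inequality of~(\ref{eq:quadratic_constraint}) for the block $q_1 = y^2 + \varepsilon(x^2 - x y)$, so that Lemma~\ref{lemma:DS_quadratic} certifies chemicality; the induced constant, proportional to $(b^2 + \varepsilon a^2 - \varepsilon a b)/\mu^2$, is then positive for all small $\mu$. Chemicality of the linear third equation reduces to the single condition~(\ref{eq:linear_constraint}).

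I would then fix the remaining freedom and rescale. Taking $b = 1$ as in Figure~\ref{fig:CDS_2} (so $a = 2/\varepsilon$) and choosing $c$ so that~(\ref{eq:linear_constraint}) holds, the coefficient of $\bar y$ appears with opposite signs in the second and third equations and the coefficients of $\bar x\bar y$ are common to the first and second equations; rescaling the variables to equalise these pairs lets the corresponding reactions fuse (see Appendix~\ref{app:CRN}), collapsing the canonical network to the $(9,4,1)$ CRN~(\ref{eq:CRN_2}) and the surviving coefficients to the parameters~(\ref{eq:parameters_2}), yielding the $(11,5,1)$ CDS~(\ref{eq:CDS_2}).

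Finally, I would transfer the chaos by robustness. Since~(\ref{eq:DS_2}) is assumed to carry a robust two-wing chaotic attractor, Definition~\ref{def:robustness} guarantees that~(\ref{eq:2_perturbed}) retains a chaotic attractor with the same properties for all sufficiently small $\varepsilon, \mu$; the ensuing translation and rescaling are affine, hence preserve the attractor and its two-wingedness, while the large-translation property of Definition~\ref{def:QCM} places it in $\mathbb{R}_{>}^3$ once $\mu$ is small. The main obstacle I anticipate is bookkeeping rather than conceptual: simultaneously meeting all the inequalities of Lemmas~\ref{lemma:DS_quadratic} and~\ref{lemma:DS_linear_2} while choosing $a,b,c$ and the rescaling factors so that the fused coefficients match~(\ref{eq:parameters_2}) exactly, and in particular verifying that the added $\varepsilon(x^2 - x y)$ stays dominated by the translated $y^2$ so that the first equation is chemical for small $\mu$.
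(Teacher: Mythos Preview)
Your proposal is correct and follows essentially the same route as the paper. One small slip in the bookkeeping: the paper does \emph{not} treat the constant $-1$ in the first equation via the universal QCM; instead it absorbs $-1$ into $q_1$ (as permitted by the Remark after Lemma~\ref{lemma:DS_quadratic} about including constants in $q_i$), so that the first equation of~(\ref{eq:2_perturbed}) is exactly $-1 + y^2 + \varepsilon(x^2 - xy)$ with no added $-\tfrac{\mu}{a}x$ term. If you add that term you will not land on~(\ref{eq:2_perturbed}) or on the stated parameters~(\ref{eq:parameters_2}) (the $-1$ in $\alpha_1$ would disappear and $\alpha_2$ would pick up an $\mathcal{O}(\varepsilon\mu)$ shift). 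The paper also takes $c = b$, i.e.\ equality in the linear constraint, so that the constant in the translated third equation vanishes, and then rescales via $\bar y \to \bar y/\varepsilon$, $\bar z \to \mu\bar z/2$. Apart from these details, your splitting, the choice $a = 2b/\varepsilon$, $b = 1$, the fusion argument, and the robustness transfer all match the paper.
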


\begin{proof}
Let us split DS~(\ref{eq:DS_2}) as follows:
$f_1(y) = -1 + y^2 = q_1(y)$, 
$f_2(x,z) = - x z = r_2(x,z)$ and
$f_3(y,z) = y - z = l_3(y,z)$. 
Perturbing then $q_1(y)$
according to Theorem~\ref{theorem:DS_quadratic}(iii), 
$l_3(y,z)$ according to Lemma~\ref{lemma:DS_linear_2},
and $r_2(x,z)$ via Theorem~\ref{theorem:universal}, 
one obtains
\begin{align}
\frac{\mathrm{d} x}{\mathrm{d} t} & = -1 + y^2
+ \varepsilon (x^2 - x y), \nonumber \\
\frac{\mathrm{d} y}{\mathrm{d} t} & = - x z
- \frac{\mu}{b} x y z, \nonumber \\
\frac{\mathrm{d} z}{\mathrm{d} t} & = y - z.
\label{eq:2_perturbed}
\end{align}
Under the translation
$(\bar{x},\bar{y},\bar{z}) = (x,y,z) + (a,b,c)/\mu$, 
one then obtains
\begin{align}
\frac{\mathrm{d} \bar{x}}{\mathrm{d} t} 
& = \left(\frac{1}{\mu^2} \left(b^2 + \varepsilon a^2 - \varepsilon a b \right) - 1 \right)
+ \frac{\varepsilon}{\mu} (b - 2 a) \bar{x}
+ \frac{1}{\mu} \left(- 2 b + \varepsilon a\right) \bar{y}
+ \varepsilon \bar{x}^2 + \bar{y}^2 - \varepsilon \bar{x} \bar{y}, 
\nonumber \\
\frac{\mathrm{d} \bar{y}}{\mathrm{d} t} 
& = -\frac{1}{\mu} \frac{a c}{b} \bar{y}
+ \frac{c}{b} \bar{x} \bar{y}
+ \frac{a}{b} \bar{y} \bar{z}
- \frac{\mu}{b} \bar{x} \bar{y} \bar{z}, \nonumber \\
\frac{\mathrm{d} \bar{z}}{\mathrm{d} t} 
& = \frac{1}{\mu} (c - b)
+ \bar{y} - \bar{z}.
\label{eq:2_perturbed_translated}
\end{align}
To make this system chemical, it is sufficient
to choose $a = 2 b/\varepsilon$, $c = b$ and e.g. $b = 1$.
By then rescaling via $\bar{y} \to \bar{y}/\varepsilon$
and $\bar{z} \to \mu \bar{z}/2$, 
the monomials $\bar{x} \bar{y}$, as well as $\bar{y}$,
are multiplied, up to sign, by the same
coefficients, thus allowing for fusion of the underlying 
reactions (see Appendix~\ref{app:CRN}). 
Removing the bars, DS~(\ref{eq:2_perturbed_translated}) 
then becomes the CDS~(\ref{eq:CDS_2}), and the
theorem then follows from the assumption that the chaotic attractor
of~(\ref{eq:DS_2}) and its two-wingedness are robust, and 
the fact that two-wingedness is invariant under affine transformations.
\end{proof}

\begin{example}
Let us choose $\varepsilon = 10^{-3}$ and $\mu = 10^{-2}$
in~$(\ref{eq:parameters_2})$.
The numerically observed two-wing chaotic attractor 
of \emph{CDS}~$(\ref{eq:CDS_2})$
is then shown in the $(x,y)$-space in \emph{Figure~\ref{fig:CDS_2}(d)};
the $(t,y)$-space is shown in \emph{Figure~\ref{fig:CDS_2}(e)}.
The corresponding \emph{LCE}s are displayed 
in~\emph{Figure~\ref{fig:CDS_2}(f)},
and are relatively close to those of \emph{DS}~$(\ref{eq:DS_2})$.
See also \emph{Figure~\ref{fig:CDS_0}(b)} for 
the two-wing chaotic attractor in the $(x,y,z)$-space.
\end{example}

\subsubsection{Hidden chaos}
\label{sec:hidden}
The chaotic attractors considered in the previous sections
can be detected via equilibria: by choosing an initial
condition sufficiently close to some unstable equilibrium, 
the corresponding trajectory converges to the chaotic attractor
in the long-run. In this section, we consider chaotic attractors
that are hidden from equilibria~\cite{Hidden}.

\begin{definition} $($\textbf{Hidden chaos}$)$ 
\label{def:chaos_hidden}
Consider \emph{DS}~$(\ref{eq:DS})$ with a chaotic attractor 
$\mathbb{V}$. Assume that the following statement is false:
for some equilibrium $\mathbf{x}^*$ of~$(\ref{eq:DS})$
and for every $\varepsilon > 0$ there exists
an initial condition $\mathbf{x}_0$ with 
$\|\mathbf{x}_0 - \mathbf{x}^*\| < \varepsilon$
such that the solution $\mathbf{x}(t;\mathbf{x}_0)$
of~$(\ref{eq:DS})$ converges to $\mathbb{V}$
as $t \to \infty$. 
Then, $\mathbb{V}$ is said to be a \emph{hidden} 
chaotic attractor of~$(\ref{eq:DS})$.
\end{definition}

It follows from Definition~\ref{def:chaos_hidden}
that sufficient for a chaotic attractor to be hidden
is that the corresponding DS has  no equilibria, 
or only stable equilibria. In this context, 
$23$ three-dimensional quadratic DSs 
with chaos and a unique and stable equilibrium
have been reported in~\cite{Sprott2};
these DSs contain $7$ or $8$ monomials in total, 
of which $2$ or $3$ are quadratic. 
Let us consider the quadratic DS
\begin{align}
\frac{\mathrm{d} x}{\mathrm{d} t} & = \frac{57}{100} 
- \frac{31}{10} z - \frac{1}{5} x y - \frac{3}{10} x z, \nonumber \\
\frac{\mathrm{d} y}{\mathrm{d} t} & = - y - z, \nonumber \\
\frac{\mathrm{d} z}{\mathrm{d} t} & = x,
\label{eq:DS_3}
\end{align}
obtained via permutation $x \to z$ and reflection $y \to -y$
in $\textrm{SE}_{17}$ from~\cite{Sprott2}[Table 1].
The hidden chaotic attractor, and 
the unique and stable equilibrium, 
are displayed in the $(x,y)$-space in Figure~\ref{fig:CDS_3}(a);
the trajectory in the $(t,x)$-space is shown in 
Figure~\ref{fig:CDS_3}(b), 
with the LCEs in Figure~\ref{fig:CDS_3}(c).

\begin{figure}[!htbp]
\vskip -2.0cm
\leftline{\hskip 
0.1cm (a) DS~(\ref{eq:DS_3}) \hskip  
4.1cm (b) \hskip  
5.3cm (c)}
\centerline{
\hskip 0.0cm
\includegraphics[width=0.35\columnwidth]{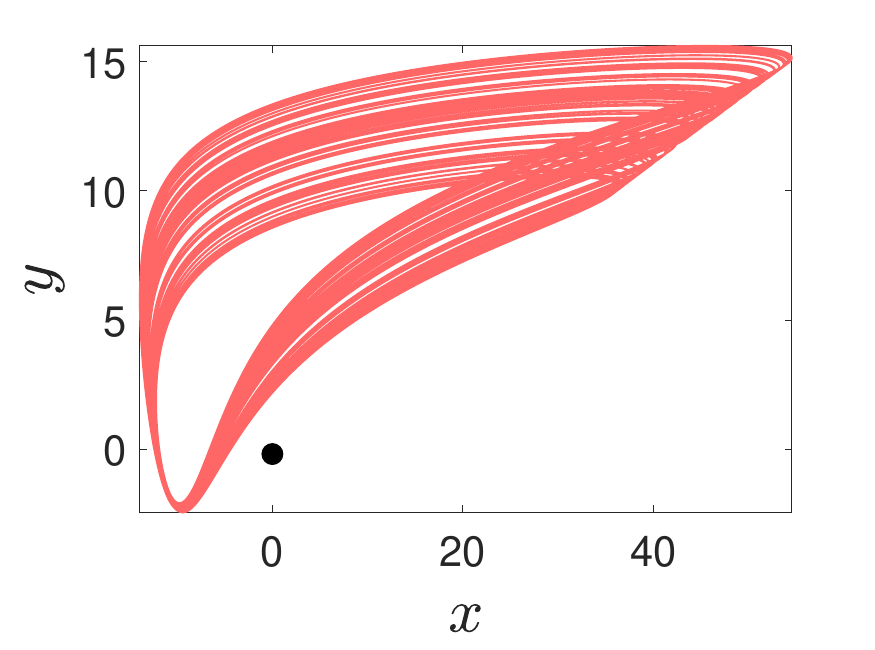}
\hskip 0.2cm
\includegraphics[width=0.35\columnwidth]{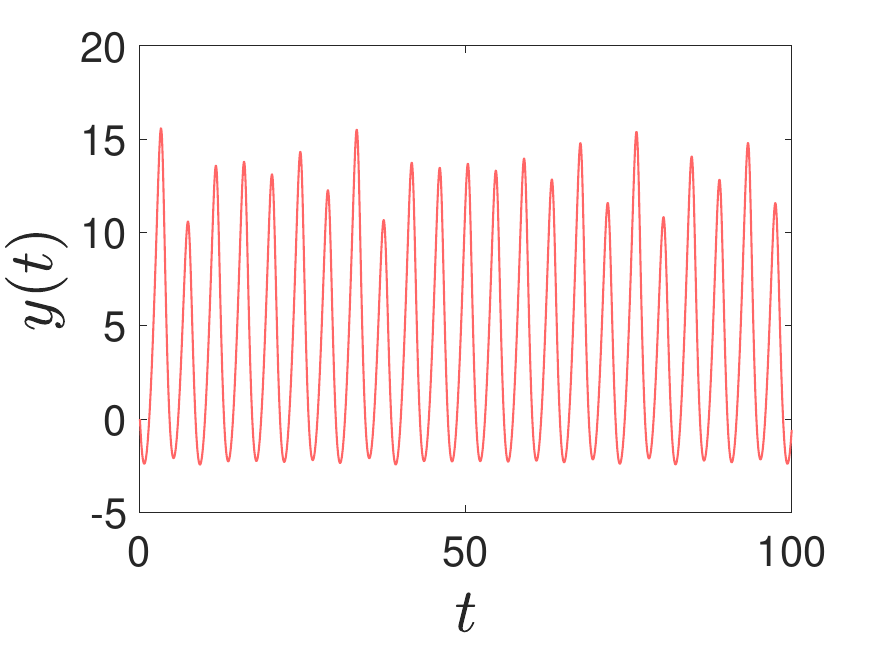}
\hskip 0.2cm
\includegraphics[width=0.35\columnwidth]{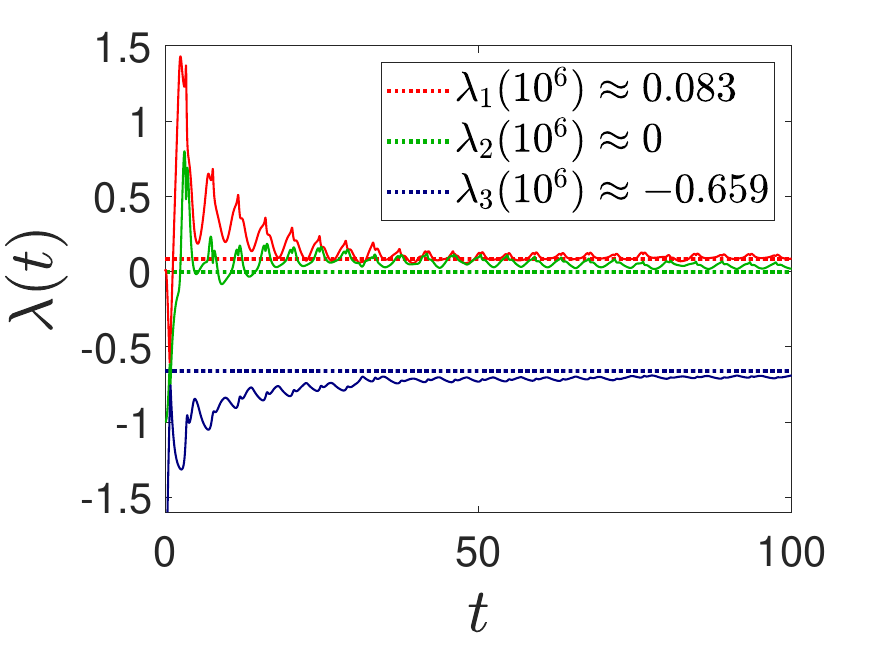}
}
\leftline{\hskip 
0.1cm (d) CDS~(\ref{eq:CDS_3}) \hskip  
3.9cm (e) \hskip  
5.3cm (f)}
\vskip 0.2cm
\centerline{
\hskip 0.0cm
\includegraphics[width=0.35\columnwidth]{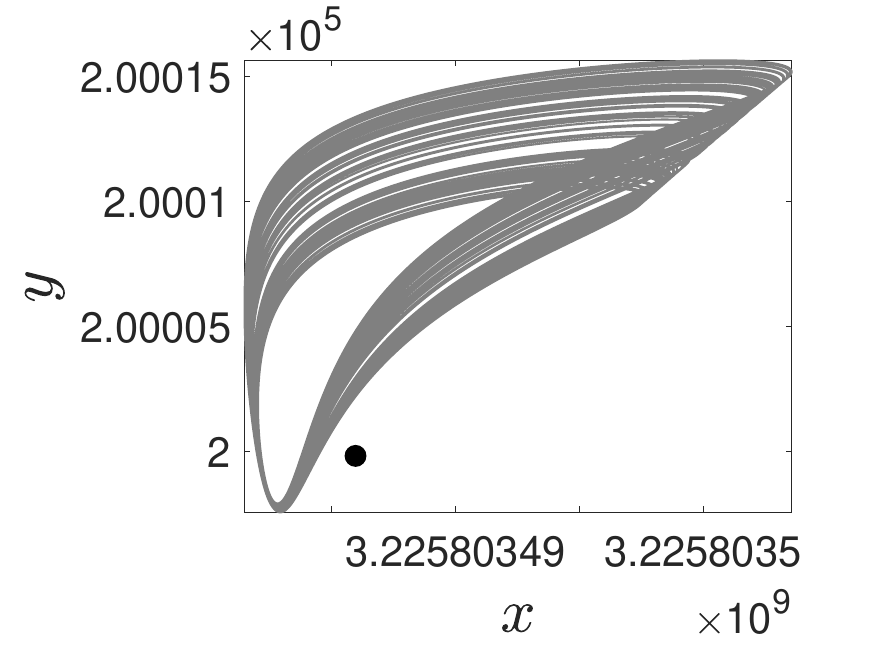}
\hskip 0.2cm
\includegraphics[width=0.35\columnwidth]{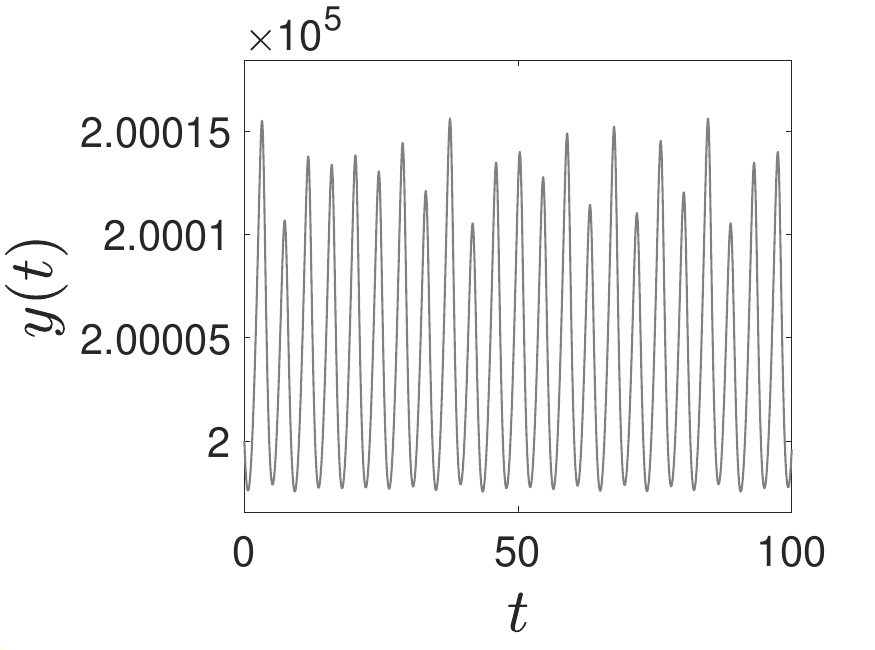}
\hskip 0.2cm
\includegraphics[width=0.35\columnwidth]{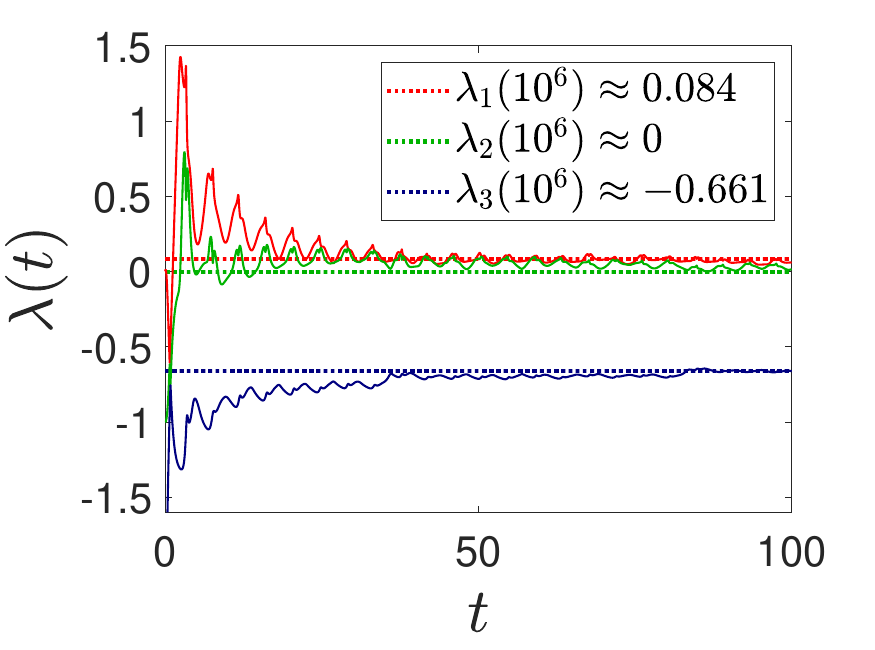}
}
\vskip -0.2cm
\caption{\it{\emph{Quadratic $(11,5)$ CDS with hidden chaos.} 
Panels \emph{(a)}--\emph{(b)} respectively display the $(x,y)$-
and $(t,y)$-space for \emph{DS}~$(\ref{eq:DS_3})$,
with initial condition $(x_0,y_0,z_0) = (-5,0,7.5)$;
the stable equilibrium is shown as a
black dot in \emph{(a)}.
Panel \emph{(c)} shows
the three finite-time \emph{LCE}s, 
along with the values
at $t = 10^6$ shown as the dashed lines.
Panels \emph{(d)}--\emph{(e)} show analogous plots 
for \emph{CDS}~$(\ref{eq:CDS_3})$
with $(\varepsilon,\mu) = (10^{-5}, 10^{-5})$ and 
the translated and rescaled initial condition 
$x_0 = (3.1 + 
0.285 \mu)^{-1} (-5 + (\varepsilon \mu)^{-1})$, 
$y_0 = 2 \mu^{-1}$, 
$z_0 = 0.6 \mu^{-1} (3.1 + 0.285 \mu)^{-1} (7.5 + 2 \mu^{-1})$.
Panel \emph{(f)} shows the \emph{LCE}s 
for the perturbed \emph{DS}~$(\ref{eq:3_perturbed})$
with $b = c = 2$, which are in the long-run identical
to the long-time \emph{LCE}s of~$(\ref{eq:CDS_3})$.}} 
\label{fig:CDS_3}
\end{figure}

Theorem~\ref{theorem:DS_quadratic} guarantees that~(\ref{eq:DS_3})
can be mapped to a one-cubic-term CDS, but not necessarily 
to a quadratic CDS. Furthermore, (smooth) perturbations
of the vector field in general introduce
additional equilibria, i.e. having a unique
equilibrium is not a robust property of DSs;
consequently, CDSs obtained from~(\ref{eq:DS_3}) 
via QCMs in general have multiple equilibria. 
We now design a QCM that overcomes these issues:
both the degree and equilibrium uniqueness 
of~(\ref{eq:DS_3}) are preserved. 

\begin{theorem}$($\textbf{\emph{Hidden chaos}}$)$ 
\label{theorem:hidden}
Consider three-dimensional quadratic $(11,5)$ \emph{CDS}
\begin{align}
\frac{\mathrm{d} x}{\mathrm{d} t} 
& = \alpha_1 - \alpha_2 x + \alpha_3 y + \alpha_4 z
+ \alpha_5 x^2 - \alpha_6 x y - \alpha_7 x z, \nonumber \\
\frac{\mathrm{d} y}{\mathrm{d} t} 
& = \alpha_1 - \alpha_8 y z, 
\nonumber \\
\frac{\mathrm{d} z}{\mathrm{d} t} 
& = -\alpha_9 z + \alpha_7 x z,
\label{eq:CDS_3}
\end{align}
with $(9,4)$ \emph{CRN}
\begin{align}
\varnothing & \xrightarrow[]{\alpha_1} X + Y, \; \; \; 
X \xrightarrow[]{\alpha_2} \varnothing,  \; \; \; 
Y \xrightarrow[]{\alpha_3} Y + X, \; \; \;
Z \xrightarrow[]{\alpha_4} Z + X, \nonumber \\
2 X & \xrightarrow[]{\alpha_5} 3 X, \; \; \; 
X + Y \xrightarrow[]{\alpha_6} Y,   \; \; \; 
X + Z \xrightarrow[]{\alpha_7} 2 Z, \; \; \; 
Y + Z \xrightarrow[]{\alpha_8} Z,  \; \; \; 
Z \xrightarrow[]{\alpha_9} \varnothing,
\label{eq:CRN_3}
\end{align}
and parameters
\begin{align}
\alpha_1 & = \frac{2}{\mu}, \; \; 
\alpha_2 = \frac{1}{\mu}, \; \; 
\alpha_3 = \frac{40}{620 \varepsilon \mu + 57 \varepsilon \mu^2}, \; \; \alpha_4 = \frac{3 - 31 \varepsilon \mu}{6 \varepsilon}, \; \; 
\alpha_5 = \frac{\varepsilon (620 + 57 \mu)}{200}, \nonumber \\
\alpha_6 & = \frac{1}{5}, \; \; 
\alpha_7 = \frac{\mu (620  + 57 \mu)}{400}, \; \; 
\alpha_8 = \frac{\mu^2 (620 + 57 \mu)}{240}, \; \; 
\alpha_9 = \frac{1}{2 \varepsilon}.
\label{eq:parameters_3}
\end{align}
Assume that \emph{DS}~$(\ref{eq:DS_3})$
has a robust chaotic attractor in $\mathbb{R}^3$. 
Then, \emph{CDS}~$(\ref{eq:CDS_3})$ has a unique and stable equilibrium,
and a hidden chaotic attractor, in $\mathbb{R}_{>}^3$
for every sufficiently small $\varepsilon > 0$ and $\mu > 0$. 
\end{theorem}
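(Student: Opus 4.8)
The plan is to build the required QCM by a splitting of DS~(\ref{eq:DS_3}) that simultaneously (a) keeps the system quadratic and (b) preserves a single equilibrium, thereby addressing the two obstructions noted before the theorem. To secure (a) I would not invoke Theorem~\ref{theorem:DS_quadratic} monomial-by-monomial --- which would leave a cubic --- but instead place \emph{both} quadratics $-\tfrac{1}{5}xy$ and $-\tfrac{3}{10}xz$, together with the constant and the non-chemical linear term $-\tfrac{31}{10}z$, into a single $q_1(\mathbf{x})$ of the form~(\ref{eq:i_quadratic}) perturbed only by $\varepsilon x^2$, as permitted by the remarks following Lemma~\ref{lemma:DS_quadratic} and Theorem~\ref{theorem:DS_quadratic}. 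The second and third equations I would split as $-y$ (already chemical) plus $r_2 = -z$, and as $r_3 = x$, applying the universal QCM of Theorem~\ref{theorem:universal} to $r_2$ and $r_3$. This yields the perturbed system
\begin{align}
\frac{\mathrm{d} x}{\mathrm{d} t} & = \frac{57}{100} - \frac{31}{10} z - \frac{1}{5} x y - \frac{3}{10} x z + \varepsilon x^2, \nonumber \\
\frac{\mathrm{d} y}{\mathrm{d} t} & = - y - z - \frac{\mu}{b} y z, \nonumber \\
\frac{\mathrm{d} z}{\mathrm{d} t} & = x + \frac{\mu}{c} x z.
\label{eq:3_perturbed}
\end{align}
Translating via $\bar{\mathbf{x}} = \mathbf{x} + \mathbf{a}/\mu$, I would verify chemicality through Lemma~\ref{lemma:DS_quadratic}, Lemma~\ref{lemma:DS_linear_2}, Theorem~\ref{theorem:universal} and the Splitting Lemma~\ref{lemma:splitting}; the only delicate point is the linear $\bar{z}$ coefficient in the first equation, which becomes $\tfrac{3a}{10\mu} - \tfrac{31}{10}$ because the translated $-\tfrac{3}{10}xz$ monomial injects a large positive multiple of $\bar{z}$ that dominates the original $-\tfrac{31}{10}$ once $a$ is taken large enough for~(\ref{eq:quadratic_constraint}) to hold.

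The step I expect to be the main obstacle is enforcing uniqueness of the equilibrium, since, as noted, a generic QCM introduces spurious equilibria. The decisive device is to set $b = c$. As translation and rescaling are bijections, the equilibria of~(\ref{eq:CDS_3}) correspond one-to-one with those of~(\ref{eq:3_perturbed}), which I would count directly: the third equation factors as $x(1 + \tfrac{\mu}{c} z) = 0$, so either $x = 0$ or $z = -c/\mu$, while substituting $z = -c/\mu$ into the second equation leaves $y(c/b - 1) + c/\mu = 0$, which for $b = c$ reduces to the impossible $c/\mu = 0$. The branch $z = -c/\mu$ therefore carries no equilibrium, forcing $x = 0$; the first equation then pins down $z = \tfrac{57}{310}$ uniquely and the second gives $y = -z/(1 + \tfrac{\mu}{b} z)$ uniquely, so~(\ref{eq:3_perturbed}) --- and hence~(\ref{eq:CDS_3}) --- has exactly one equilibrium, converging to that of~(\ref{eq:DS_3}) as $\mu \to 0$. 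The same choice $b = c$ also cancels the linear $\bar{y}$ term in the second equation, producing the clean form $\alpha_1 - \alpha_8 yz$.

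For stability I would linearize~(\ref{eq:3_perturbed}) at this equilibrium: since it lies on $x = 0$, the Jacobian entry from $\varepsilon x^2$ vanishes there and the $\mu$-scaled perturbations contribute only $O(\mu)$, so the Jacobian is an $O(\mu)$ perturbation of that of~(\ref{eq:DS_3}); continuity of eigenvalues together with the assumed stability then gives a stable equilibrium for all sufficiently small $\varepsilon, \mu$, a property inherited by~(\ref{eq:CDS_3}) because affine changes of variable conjugate the linearization and preserve its spectrum. I would then, with the concrete choice $a = 1/\varepsilon$ and $b = c = 2$, fix the rescalings of $x$ and $z$ (leaving $y$) so that the $xz$ coefficients in the first and third equations, and the constants in the first and second equations, agree up to sign, allowing the reaction fusions that produce the $(9,4)$ \emph{CRN}~(\ref{eq:CRN_3}) and the parameters~(\ref{eq:parameters_3}). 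Finally, robustness of the chaotic attractor of~(\ref{eq:DS_3}) (Definition~\ref{def:robustness}) transports it through the small perturbation and the affine maps into $\mathbb{R}_{>}^3$; and because~(\ref{eq:CDS_3}) then has a unique and stable equilibrium, this attractor is hidden by the criterion following Definition~\ref{def:chaos_hidden}.
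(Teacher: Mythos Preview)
Your proposal is correct and follows essentially the same route as the paper's own proof: the same splitting of~(\ref{eq:DS_3}), the same perturbed system~(\ref{eq:3_perturbed}), the same choice $b=c$ to eliminate the spurious equilibrium branch, the same parameters $a=1/\varepsilon$, $b=c=2$, and the same rescalings to enable the reaction fusions giving~(\ref{eq:CRN_3}). The only differences are expository --- for instance, you spell out the Jacobian perturbation for stability where the paper simply invokes hyperbolicity of the limiting equilibrium.
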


\begin{proof}
Let us split DS~(\ref{eq:DS_3}) as follows:
$f_1(x,y,z) = 57/100
- 31 z/10 - x y/5 - 3 x z/10 
= q_1(x,y,z)$, 
$f_2(y,z) = - y - z = l_2(y) + r_2(z)$ with $l_2(y) = - y$, and
$f_3(x) = x = r_3(x)$. 
Perturbing then $q_1(x,y,z)$ with $\varepsilon x^2$,
to satisfy the conditions~(\ref{eq:quadratic_constraint}) from Lemma~\ref{lemma:DS_quadratic},
$l_2(y)$ according to Lemma~\ref{lemma:DS_linear_2},
and $r_2(z)$ and $r_3(x)$ via Theorem~\ref{theorem:universal}, 
one obtains
\begin{align}
\frac{\mathrm{d} x}{\mathrm{d} t} & =
\frac{57}{100} - \frac{31}{10} z - \frac{1}{5} x y
- \frac{3}{10} x z + \varepsilon x^2, \nonumber \\
\frac{\mathrm{d} y}{\mathrm{d} t} & = - y - z
- \frac{\mu}{b} y z, \nonumber \\
\frac{\mathrm{d} z}{\mathrm{d} t} & = x
+ \frac{\mu}{c} x z.
\label{eq:3_perturbed}
\end{align}
One equilibrium of DS~(\ref{eq:3_perturbed}) is given by
\begin{align}
(x^*,y^*,z^*) & = \left(0, 
\frac{-57}{310 + \frac{57}{b} \mu}, \frac{57}{310}\right),
\label{eq:3_perturbed_eq}
\end{align}
which, for every sufficiently small $\varepsilon > 0$ and $\mu > 0$, 
being a perturbation of the hyperbolic and stable 
equilibrium of~(\ref{eq:DS_3}),
is itself hyperbolic and stable. Every other equilibrium
$(x^*,y^*,z^*)$ of~(\ref{eq:3_perturbed}) satisfies
$0 = (c/b - 1) y^* + c/\mu$; to this end, 
we choose $c = b$, which ensures 
that the equilibrium~(\ref{eq:3_perturbed_eq}) is unique. 
Translating then via 
$(\bar{x},\bar{y},\bar{z}) = (x,y,z) + (a,b,b)/\mu$, 
one obtains
\begin{align}
\frac{\mathrm{d} \bar{x}}{\mathrm{d} t} 
& = \left[\frac{a}{\mu^2} \left(\varepsilon a
- \frac{b}{2} \right) + \frac{31 b}{10 \mu} + \frac{57}{100} \right]
+ \frac{1}{10 \mu} \left(5 b - 20 \varepsilon a \right) \bar{x}
+ \frac{a}{5 \mu} \bar{y}  
+ \left(-\frac{31}{10} + \frac{3 a}{10 \mu} \right) \bar{z}\nonumber \\
& + \varepsilon \bar{x}^2
- \frac{1}{5} \bar{x} \bar{y}
- \frac{3}{10} \bar{x} \bar{z}, \nonumber \\
\frac{\mathrm{d} \bar{y}}{\mathrm{d} t} 
& = \frac{b}{\mu} - \frac{\mu}{b} \bar{y} \bar{z}, \nonumber \\
\frac{\mathrm{d} \bar{z}}{\mathrm{d} t} 
& = - \frac{a}{b} z + \frac{\mu}{b} \bar{x} \bar{z}.
\label{eq:3_perturbed_translated}
\end{align}
Sufficient to make this system chemical is with 
$b = 2 \varepsilon a$; we then e.g. fix  
$a = 1/\varepsilon$ to make $b = \mathcal{O}(1)$.
Then, rescaling via $\bar{x} \to (31/10 + 
57 \mu /200) \bar{x}$
and $\bar{z} \to (5/3) \mu 
(31/10 + 57 \mu /200)\bar{z}$, 
the monomials $\bar{x} \bar{z}$, as well as $1$,
are multiplied, up to sign, by the same
coefficients, thus allowing for fusion of the underlying 
reactions (see Appendix~\ref{app:CRN}). 
Removing the bars, DS~(\ref{eq:3_perturbed_translated}) 
then becomes the CDS~(\ref{eq:CDS_3}), and the
theorem then follows from the assumption that the chaotic attractor
of~(\ref{eq:DS_3}) is robust.
\end{proof}

\begin{example}
Let us choose $\varepsilon = \mu = 10^{-5}$ 
in~$(\ref{eq:parameters_3})$, under which the 
unique equilibrium of~$(\ref{eq:CDS_3})$ is stable.
The numerically observed hidden chaotic attractor,
together with the equilibrium, are shown 
in the $(x,y)$-space in \emph{Figure~\ref{fig:CDS_3}(d)};
the corresponding $(t,y)$-space is shown in 
\emph{Figure~\ref{fig:CDS_3}(e)}.
The associated \emph{LCE}s are displayed in 
\emph{Figure~\ref{fig:CDS_3}(f)}, and are close
to those of the \emph{DS}~$(\ref{eq:DS_3})$.
See also \emph{Figure~\ref{fig:CDS_0}(c)} for 
the hidden chaotic attractor and the equilibrium 
in the $(x,y,z)$-space.
\end{example}

\section{Discussion}
\label{sec:discussion}
In this paper, we have developed some fundamental theory 
about mapping polynomial dynamical systems 
(DSs) into dynamically similar chemical dynamical systems (CDSs)
of the same dimension and with a reduced 
number of non-linear terms. 
We have then applied this theory to investigate 
the capacity of chemical and biological systems
to display chaotic behaviors with various properties. 

In particular, in Section~\ref{sec:DS_to_CDS}, 
we have developed some theory about 
the so-called \emph{quasi-chemical maps} (QCMs).
These maps, put forward in~\cite{QCM}, 
can transform polynomial DSs into CDSs
purely via smooth perturbations of the vector field 
and translations of the dependent variables,
thereby preserving both the dimension
and robust structures.
In Section~\ref{sec:linear}, we have constructed
some QCMs that can map linear DSs into 
quadratic CDSs with a lower number of quadratics.
Using one such map, we have proved 
in Theorem~\ref{theorem:DS_linear} that every 
$N$-dimensional linear DS, with $M_{1}^{-}$ 
non-chemical linear monomials, can be mapped
to an $N$-dimensional quadratic CDS
with at most $(M_{1}^{-} + N)$ quadratics.
Similarly, in Section~\ref{sec:quadratic}, 
we have constructed QCMs that can map quadratic
DSs into cubic CDSs with a lower number of cubics.
One such map has been used to prove in 
Theorem~\ref{theorem:DS_quadratic} that every 
quadratic DS, with $M_{2}$ 
quadratic monomials, can be mapped
to a cubic CDS with at most $(M_{2} - 1)$ cubics.
To reduce the number of both quadratics and cubics, 
these different QCMs can be combined into a single composite one
by suitably splitting DSs, as outlined in Lemma~\ref{lemma:splitting}.
This approach has then been used in Section~\ref{sec:applications} 
to investigate chaos in CDSs.

In particular, in Section~\ref{sec:one_quadratic}, we have 
constructively proved
in Theorem~\ref{theorem:one_quadratic} that every 
quadratic DS, with only one quadratic monomial
and robust chaos, can be mapped to a quadratic CDS
of the same dimension and with the robust chaos preserved.
A number of such DSs have been put 
forward, such as the R\"ossler~\cite{Rossler}
and Sprott systems F--S~\cite{Sprott}.
Assuming existence of robust chaos in these DSs, Theorem~\ref{theorem:one_quadratic} 
then shows that three-dimensional quadratic CDSs can display a range
of chaotic behaviors. As a concrete example, 
we have constructed in Theorem~\ref{theorem:Rossler}
the chemical R\"ossler system - a quadratic $(11,5)$ CDS,
with $11$ monomials of which exactly $5$ are quadratic,
with $(9,4)$ chemical reaction network (CRN),
with $9$ reactions of which exactly $4$ are quadratic,
displaying the R\"ossler attractor.
This CDS is simpler, when it comes to the number of quadratics, 
than the minimal $(9,6)$ Willamowski–R\"ossler 
system~\cite{RosslerW,MinRosslerW}  with $(7,4)$ CRN.
We have then constructed in Theorem~\ref{theorem:one_wing}
a quadratic $(10,3)$ CDS with $(8,3)$ CRN and a chaotic attractor
with one wing, shown in Figure~\ref{fig:CDS_0}(a).
To the best of the author's knowledge, this 
is currently the three-dimensional quadratic CDS with
observed chaos that has the lowest number of quadratic
monomials and reactions.

In Section~\ref{sec:two_quadratics}, 
we have constructively proved in Theorem~\ref{theorem:two_quadratic} 
that every quadratic DS, with only two quadratic monomials
and robust chaos, can be mapped to a cubic CDS,
with at most one cubic monomial,
of the same dimension and with the robust chaos preserved.
A number of such DSs have been reported, 
such as the Lorenz~\cite{Lorenz} and
Sprott systems A--E~\cite{Sprott}[Table 1], 
and many others~\cite{Sprott2,Hidden}.
Existence of a robust chaotic attractor
is proved for the Lorenz system~\cite{Tucker};
hence, one obtains Corollary~\ref{corollary:Lorenz},
guaranteeing existence of a three-dimensional 
cubic CDS with only one cubic and a chaotic attractor.
More broadly, assuming the other DSs also have robust chaos, 
Theorem~\ref{theorem:two_quadratic} shows
that three-dimensional cubic CDSs can realize 
a rich set of chaotic behaviors already with 
only one cubic monomial.
To showcase this, we have constructed a cubic $(11,5,1)$ CDS
with $(9,4,1)$ CRN in Theorem~\ref{theorem:two_wings}
that displays a chaotic attractor
with two wings; see Figure~\ref{fig:CDS_0}(b). Furthermore, in Theorem~\ref{theorem:hidden},
we have presented a quadratic $(11,5)$ CDS with $(9,4)$ CRN
displaying a hidden chaotic attractor, and a stable and unique
equilibrium, displayed in Figure~\ref{fig:CDS_0}(c). 
To the best of the author's
knowledge, these two CDSs have the lowest number
of highest-degree monomials and reactions reported to this day
in the context of the two-wing and hidden chaos;
in fact, (\ref{eq:CDS_3}) appears to be the first 
three-dimensional CDS with hidden chaos
and unique equilibrium reported.

We close this paper with three sets of 
open questions about three-dimensional CDSs with chaos.
Firstly, is there a quadratic CDS with a (one-wing) chaotic attractor
and less than $3$ quadratic monomials, or less than 
$3$ quadratic reactions?
Is there a \emph{quadratic} CDS with 
a two-wing chaotic attractor?
Is there a CDS with a chaotic attractor
and stable and unique equilibrium with less than 
$5$ quadratic monomials, or less than $4$ quadratic reactions?
Is there a cubic or quadratic CDS with 
a chaotic attractor and no equilibria?
What is the lowest number of linear monomials in all such CDSs?
Secondly, the QCMs, used to systematically construct the CDSs in this paper, 
rely on the perturbation parameters $\varepsilon, \mu > 0$, 
which have to take sufficiently small values in order for the 
target chaotic set to persist under the perturbations.
The ``less robust'' a chaotic set, the smaller one
requires the parameter values. Consequences of these
small values are that the polynomial coefficients 
in the constructed CDSs, as well as the regions in the state-space
where the chaotic sets are positioned,
can span many orders of magnitude, see e.g. (\ref{eq:CDS_Rossler_p})
and Figure~\ref{fig:CDS_0}.
Can one systematically construct CDSs of similar complexity
as in this paper, but without such scale separations?
Finally, chaotic attractors in the DSs considered in this paper
in general have a bounded region of attraction,
beyond which the solutions may grow unboundedly,
and the constructed CDSs can inherit such behaviors. 
Can one systematically construct 
globally bounded chaotic CDSs, without  
significantly increasing their structural complexity?

\appendix

\section{Appendix: Chemical reaction networks}
\label{app:CRN}
Every CDS induces a canonical set of chemical reactions~\cite{Janos}.
In what follows, given any $x \in \mathbb{R}$, we
define the sign function as
$\textrm{sign}(x) = -1$ if $x < 0$, 
$\textrm{sign}(x) = 0$ if $x = 0$, and 
$\textrm{sign}(x) = 1$ if $x > 0$.

\begin{definition} $($\textbf{Chemical reaction network}$)$ 
\label{def:CRN}
Assume that~$(\ref{eq:DS})$ is a \emph{CDS}. 
Let $m(\mathbf{x}) = \alpha x_1^{\nu_{1}} 
x_2^{\nu_{2}} \ldots x_N^{\nu_{N}}$
be a monomial from $f_i(\mathbf{x})$,
where $\alpha \in \mathbb{R}$
and $\nu_1,\nu_2,\ldots,\nu_N$ are non-negative integers.
Then, the monomial $m(\mathbf{x})
= \textrm{\emph{sign}}(\alpha) 
|\alpha| x_1^{\nu_{1}} x_2^{\nu_{2}} \ldots x_N^{\nu_{N}}$
induces the \emph{canonical chemical reaction}
\begin{align}
\sum_{k = 1}^N \nu_{k} X_k 
& \xrightarrow[]{|\alpha|} 
\left(\nu_{i} + \textrm{\emph{sign}}(\alpha) \right) X_i
+ \sum_{k = 1, k \ne i}^N \nu_{k} X_k, 
\label{eq:CR}
\end{align}
where $X_i$ denotes the chemical species whose concentration is $x_i$.
The set of all such chemical reactions,
induced by all the monomials in 
the vector field $\mathbf{f}(\mathbf{x})$,
is called the \emph{canonical chemical reaction network} (\emph{CRN}) 
induced by~$(\ref{eq:DS})$. 
\end{definition}
\noindent \textbf{Remark}. 
Terms of the form $0 X_i$ are denoted by $\varnothing$,
and interpreted as some neglected species.

For any given CDS, the induced canonical CRN 
from Definition~\ref{def:CRN} is unique.
However, a given CDS can also induce other, non-canonical, CRNs.

\begin{definition} $($\textbf{Fused reaction}$)$ \label{def:fused}
Consider $M$ canonical reactions with identical left-hand sides:
\begin{align}
\sum_{k = 1}^N \nu_{k} X_k & \xrightarrow[]{|\alpha_j|} 
\left(\nu_{k_j} + \textrm{\emph{sign}}(\alpha_j) \right) X_{k_j}
+ \sum_{k = 1, k \ne k_j}^N \nu_{k} X_k, 
\; \; \; j = 1,2, \ldots, M. \nonumber 
\end{align}
Assume that these reactions also have identical coefficients
above the reaction arrows, 
$|\alpha_1| = |\alpha_2| = \ldots = |\alpha_M| = |\alpha|$.
Then, the corresponding \emph{fused reaction} is given by 
\begin{align}
\sum_{k = 1}^N \nu_{k} X_k & \xrightarrow[]{|\alpha|} 
\left(\nu_{k_1} + \textrm{\emph{sign}}(\alpha_1) \right) X_{k_1}
+
\ldots
+
\left(\nu_{k_M} + \textrm{\emph{sign}}(\alpha_M) \right) X_{k_M}
+ \sum_{\substack{k = 1,\\ k \ne k_1,k_2,\ldots,k_M}}^N \nu_{k} X_k.
\nonumber
\end{align}
Any network obtained by fusing reactions in the canonical \emph{CRN}
is called a non-canonical \emph{CRN}.
\end{definition}
\noindent \textbf{Remark}. 
Fusion is possible when a monomial appears
in multiple equations of~(\ref{eq:DS}) 
and is multiplied, up to sign, by identical coefficients.

\begin{definition} $($\textbf{Degree of reaction}$)$ 
\label{def:degree}
Every chemical reaction $\sum_{k = 1}^N \nu_{k} X_k 
\xrightarrow[]{\alpha} \sum_{k = 1}^N \nu_{k}' X_k $
is said to be of \emph{degree $\sum_{k = 1}^N \nu_{k}$}. 
\end{definition}

\section{Appendix: Lyapunov characteristic exponents}
\label{app:LCE}
Let $\mathbf{x}(t;\mathbf{x}_0) \in \mathbb{R}^N$
be the solution of~(\ref{eq:DS})
with initial value $\mathbf{x}(0;\mathbf{x}_0) = \mathbf{x}_0$. 
Then, the \emph{linearization} of~(\ref{eq:DS})
around $\mathbf{x}(t;\mathbf{x}_0)$ is 
the $N$-dimensional linear non-autonomous DS
\begin{align}
\frac{\mathrm{d} \mathbf{w}}{\mathrm{d} t} 
& = \nabla \mathbf{f}(\mathbf{x}(t;\mathbf{x}_0)) \mathbf{w},
\; \; \; \mathbf{w}(0) = \mathbf{w}_0,
\label{eq:DS_linearization} 
\end{align}
where $\nabla \mathbf{f}(\mathbf{x}) 
\in \mathbb{R}^{N \times N}$ is the Jacobian 
of $\mathbf{f}(\mathbf{x})$.
To measure the rate of change of $\mathbf{w}
= \mathbf{w}(t;\mathbf{x}(t;\mathbf{x}_0),\mathbf{w}_0)$
in comparison with the exponential function $\exp(t)$,
we define the following quantity~\cite{Lyapunov,Adrianova}.

\begin{definition} $($\textbf{Lyapunov characteristic exponent}$)$ 
\label{def:LCE}
The finite-time \emph{Lyapunov characteristic exponent} (\emph{LCE})
of solution $\mathbf{x}(t;\mathbf{x}_0)$ of~$(\ref{eq:DS})$ 
with respect to the perturbation $\mathbf{w}_0 \ne \mathbf{0}$
is the scalar $\lambda(\cdot; \mathbf{x}_0,\mathbf{w}_0) : \mathbb{R} \to \mathbb{R}$ given by
\begin{align}
\lambda(t; \mathbf{x}_0,\mathbf{w}_0) & \equiv \frac{1}{t} 
\ln \|\mathbf{w}(t;\mathbf{x}(t;\mathbf{x}_0),\mathbf{w}_0) \|,
\label{eq:LCE}
\end{align}
where $\mathbf{w} = \mathbf{w}(t;\mathbf{x}(t;\mathbf{x}_0),\mathbf{w}_0)$ 
is the solution of~$(\ref{eq:DS_linearization})$. Assuming existence, 
the corresponding infinite-time \emph{LCE} is given by
$\lambda_{\infty}(\mathbf{x}_0,\mathbf{w}_0)
\equiv \lim_{t \to \infty} 
\lambda(t; \mathbf{x}_0,\mathbf{w}_0)$.
\end{definition} 

If the solution $\mathbf{x}(t;\mathbf{x}_0)$ is bounded
for all $t \in \mathbb{R}$, then it has at most $N$ distinct
infinite-time LCEs, all of which are finite~\cite{Adrianova};
furthermore, if $\lim_{t \to \infty} \mathbf{x}(t;\mathbf{x}_0)$ is not an equilibrium, then one of the LCEs is zero~\cite{LCE_Zero}.
In addition, infinite-time LCEs 
are invariant under affine change of coordinates~\cite{Adrianova}.
Furthermore, under suitable conditions~\cite{Adrianova,LCE_Survey}, 
trajectories in chaotic attractors
have a negative, zero and a positive infinite-time LCE, 
and the sum of the LCEs is negative.

\textbf{Computation of LCEs}.
Let matrix $X(t;\mathbf{x}_0) \in \mathbb{R}^{N \times N}$
contain as its $i$th column 
solution $\mathbf{w}(t;\mathbf{x}(t;\mathbf{x}_0),\mathbf{e}_i) 
\in \mathbb{R}^N$ of~(\ref{eq:DS_linearization}) with initial 
condition $\mathbf{w}_0 = \mathbf{e}_i$, the $i$th standard 
basis vector in $\mathbb{R}^N$; then $X(t;\mathbf{x}_0)$
is called the \emph{matriciant} (fundamental matrix) of~(\ref{eq:DS_linearization})~\cite{Adrianova}.
Having linearly independent columns, 
it admits the $Q R$-factorization
$X(t) = Q(t) R(t)$, 
with $Q(t) \in \mathbb{R}^{N \times N}$ orthogonal
and $R(t) \in \mathbb{R}^{N \times N}$ upper-triangular
with non-negative diagonal elements 
$R_{i,i}(t) \ge 0$. Under suitable regularity conditions,
the $N$ infinite-time LCEs are given by
$\lambda_{\infty,i} = \lim_{t \to \infty} 
(1/t) \ln R_{i,i}(t)$~\cite{Adrianova,LCE_Survey},
and can therefore be approximated with
$\lambda_{i}(t) = (1/t) \ln R_{i,i}(t)$ 
with $t$ sufficiently large.

However, the elements of $X(t;\mathbf{x}_0)$
can change exponentially fast with time, and can
therefore take values smaller (underflow) 
or larger (overflow) in magnitude 
than representable on a computer 
for the desired values of $t$.
To this end, let us use the matriciant identity
\begin{align}
X(m \tau;\mathbf{x}_0) & = 
X(\tau;\mathbf{x}((m-1) \tau))
X(\tau;\mathbf{x}((m-2) \tau)) \ldots 
X(\tau;\mathbf{x}(\tau))
X(\tau;\mathbf{x}_0),
\label{eq:matriciant_product}
\end{align}
for every integer $m > 0$ and time-step $\tau > 0$. 
To address the numerical issues, 
one can choose $\tau > 0$ sufficiently small, 
solve~(\ref{eq:DS}) and~(\ref{eq:DS_linearization})
to determine each of the factors on the right-hand side
of~(\ref{eq:matriciant_product}), and then 
compute the $Q R$-factorization $(m - 1)$ times iteratively as follows:  
$X(\tau;\mathbf{x}_0) = Q^1 R^1$, 
$X(\tau;\mathbf{x}(\tau)) Q^1 = Q^2 R^2$, 
$\ldots$, 
$X(\tau;\mathbf{x}((m-1) \tau)) Q^{m - 1} = Q^m R^m$,
leading to 
\begin{align}
X(m \tau;\mathbf{x}_0)
& = Q^{m} R^{m} R^{m - 1} \ldots R^2 R^1.
\end{align}
Using this composite $Q R$-factorization,
the $N$ infinite-time LCEs can be approximated with
\begin{align}
\lambda_i(t) & = \frac{1}{t} \ln \left(\prod_{j = 1}^{m} R_{i,i}^{j} \right)
= \frac{1}{t} \sum_{j = 1}^{m} \ln R_{i,i}^{j},
 \; \; \; i = 1,2,\ldots,N,
\label{eq:LCE_numerical}
\end{align} 
with sufficiently large $t = m \tau$ and 
sufficiently small $\tau > 0$. In this paper,
we use this discrete $Q R$ algorithm to compute the LCEs;
see e.g.~\cite{LCE_Survey,LCE_Error} for more details.

\end{document}